\newcommand\abs[1]{\left|#1\right|}
\newcommand\floor[1]{\left\lfloor#1\right\rfloor}
\numberwithin{equation}{section}
\newcommand{\Prob}{\mathbb{P}}
\renewcommand\Re{\operatorname{Re}}
\renewcommand\Im{\operatorname{Im}}
\newcommand{\eps}{\varepsilon}
\newcommand{\Le}{\mathcal{L}}
\newcommand{\mat}{\mathbf}
\newcommand{\1}{\mathbf{1}}
\renewcommand{\Pr}{\mathbb P}
\newcommand{\pfrac}[2]{\left(\frac{#1}{#2}\right)}
\theoremstyle{plain}
  \newtheorem{theorem}{Theorem}[section]
  \newtheorem{conjecture}[theorem]{Conjecture}
  \newtheorem{lemma}[theorem]{Lemma}
  \newtheorem{proposition}[theorem]{Proposition}
\theoremstyle{definition}
  \newtheorem{definition}[theorem]{Definition}
  \newtheorem{example}[theorem]{Example}
\theoremstyle{remark}
  \newtheorem{remark}[theorem]{Remark}
\begin{document}
\title{Low-degree factors of random polynomials}

\author[S. O'Rourke]{Sean O'Rourke}
\address{Department of Mathematics, University of Colorado at Boulder, Boulder, CO 80309  }
\email{sean.d.orourke@colorado.edu}

\author[P. M. Wood]{Philip Matchett Wood}
\thanks{The second author was partially supported by National Security Agency (NSA) Young Investigator Grant numbers H98230-14-1-0149 and H98230-16-1-0301.} 
\address{Department of Mathematics, University of Wisconsin-Madison, 480 Lincoln Dr., Madison, WI 53706 }
\email{pmwood@math.wisc.edu}

\begin{abstract}
We study the probability that a monic polynomial with integer coefficients has a low-degree factor over the integers, which is equivalent to having a low-degree algebraic root.  It is known in certain cases that random polynomials with integer coefficients are very likely to be irreducible, and our project can be viewed as part of a general program of testing whether this is a universal behavior exhibited by many random polynomial models.  
Our main result shows that pointwise delocalization of the roots of a random polynomial can be used to imply that the polynomial is unlikely to have a low-degree factor over the integers.  We apply our main result to a number of models of random polynomials, including characteristic polynomials of random matrices, where strong delocalization results are known.
%


\end{abstract}

\maketitle

\section{Introduction}\label{S:intro}

Consider the following question: is it true that a random monic polynomial with integer coefficients is irreducible with high probability?    For example, a version of Hilbert's Irreducibility Theorem\footnote{
See \cite{Z} for a  modern formulation of Hilbert's Irreducibility Theorem.}  
states that if $h_{n,N}$ is a monic polynomial in one variable of fixed degree $n$ where all coefficients except the degree $n$ coefficient are chosen independently and uniformly at random from among all integers in the interval $[-N, N]$, then the probability that $h_{n,N}$ is irreducible approaches $1$ in the limit as $N \to \infty$.  
This was first proved by van der Waerden in 1934 \cite{vdW1934}; and in fact, the probability that $h_{n,N}$ is reducible is of order $1/N$, which was proven by van der Waerden two years later \cite{vdW1936}. (The existence and value of the limiting constant was determined by Chela \cite{Ch} in 1963 in terms of the Riemann Zeta function.)  
%
Van der Waerden \cite{vdW1934,vdW1936} also showed that, with probability tending to $1$, the Galois group of the random polynomial $h_{n,N}$ is the full symmetric group $\mathfrak S_n$ on $n$ elements (which implies irreducibility) as $N \to \infty$.  
Estimates for the exact order for the probability that the Galois group is not $\mathfrak S_n$ have been improved since van der Waerden, first in 1955 and 1956 by Knobloch 
\cite{Kzum,K56}, then in 1973 by Gallagher \cite{XG} who applied the large sieve, followed by more recent progress in 2010 by Zwina \cite{Z}, in 2013 by Dietmann \cite{D}, and in 2015 by Rivin \cite{Rivin}
(see also
\cite{
Cohen1, Cohen2,
W}
and references therein).

How the random polynomial is generated matters, and there is a general heuristic that if the random integer coefficients are generated so that ``elementary'' factorizations are avoided---for example, one ensures that the constant coefficient is not likely to be zero, in which case $x$ would be a factor of the polynomial $f(x)$---then the polynomial is very likely to be irreducible.  One can think of this heuristic as suggesting a kind of universality (see, for example,  \cite[Heuristic 1.1]{BBBSWW}), and in some specific instances, it has been conjectured that the behavior in Hilbert's Irreducibility Theorem extends to different settings, including when the degree $n$ is growing.  For example, one can define a random polynomial $g_{n}$ where the constant coefficient and the degree $n$ coefficient are equal to $1$, and all other coefficients are $0$ or $1$ independently with probability $1/2$.  In the limit as the degree $n$ goes to infinity (in contrast to the degree being fixed in Hilbert's Irreducibility Theorem and the results discussed above) it has been conjectured that, once again, the probability that $g_n$ is irreducible approaches one as $n\to\infty$ (see \cite{K01,OP})

The question of proving irreducibility in the case where the degree of the random polynomial tends to infinity and the support of the coefficients remains bounded (or bounded by a function of the degree) seems to be quite challenging.   For example, in the specific case of the polynomials $g_n$ described above, the current best result (due to Konyagin \cite{K01}) shows that the probability is bounded below by $c/\log n$, where $c$ is a positive constant, and as far as the authors know, there is not a result showing that the probability that $g_n$ is irreducible remains bounded away from zero as the degree increases, even though this probability is conjectured to approach 1. (Interestingly, Bary-Soroker and Kozma \cite{BSK} have proven that bivariate polynomials with independent $\pm 1$ do become irreducible with high probability as the degree increases, though the approach does not extend to a single-variable polynomial like $g_n$.) One key step in Konyagin's result \cite{K01} is showing that $g_n$ is unlikely to have a factor over the integers with degree up to $c n/\log n$, which is step towards proving irreducibility; note that showing that there is no factor over the integers of degree up to $n/2$ would prove irreducibility for a degree $n$ polynomial.  

In the current note, we show that the phenomenon of random polynomials having no factors over the integers with small degree is quite general, and in fact can be implied by pointwise delocalization of the roots of the random polynomial.  Generally speaking, we show that, for a random monic polynomial $f$ with integer coefficients, if $\sup_{z \in \mathbb{C}} \Pr(f(z)=0)$ is sufficiently small, 
%
%
then the probability of a low-degree factor over the integers is also small.  We refer to the quantity $\sup_{z \in \mathbb{C}} \Pr(f(z)=0)$ being small as \emph{pointwise delocalization}.  In particular, pointwise delocalization rules out the possibility that $f$ has a deterministic (or near deterministic) root.  More generally, pointwise delocalization can be viewed as measuring the probability that $f$ has some ``elementary'' factorization.  For instance, $\Prob(f(0) = 0)$ is the probability that $z$ is a factor of $f(z)$.

Our main result  provides useful bounds for random polynomials with correlated and highly dependent coefficients, because pointwise delocalization is a statement about the roots, rather than the coefficients.  
This is particularly useful, for example, when studying the characteristic polynomial of a random matrix: the coefficients are typically dependent and correlated, but often more is known about the roots, which are the eigenvalues of the matrix.

When $f$ is the characteristic polynomial of a square random matrix, we can often show that the pointwise delocalization condition holds by using sufficiently general results which bound the probability that the matrix is singular or has a very small singular value. 
In Section \ref{sec:examples}, we consider various models of random polynomials and random matrices for which good pointwise delocalization results are known.  For example, we show that for any $\epsilon > 0$ and for an $n$ by $n$ random matrix with each entry $+1$ or $-1$ independently with probability $1/2$, the characteristic polynomial factors over the integers with a factor of degree at most $n^{1/2 -\epsilon}$ with probability at most $\left(\frac{1}{\sqrt 2} + o(1)\right)^n$ (see Theorem~\ref{thm:iidpm1}).
    
We begin by fixing some terminology and notation.  If $F$ is a field, a polynomial with coefficients in $F$ is \emph{irreducible over $F$} if the polynomial is nonconstant and cannot be factored into the product of two nonconstant polynomials with coefficients in $F$.  More generally, a polynomial with coefficients in a unique factorization domain $R$ (for example, the integers) is said to be \emph{irreducible over $R$} if it is an irreducible element of the polynomial ring $R[x]$, meaning that the polynomial is nonzero, is not invertible, and cannot be written as the product of two non-invertible polynomials with coefficients in $R$.  Irreducibility of a polynomial over a ring $R$ generalizes the definition given for the case of coefficients in a field because, in the field case, the nonconstant polynomials are exactly the polynomials that are non-invertible and nonzero.  We say $f$ is \emph{reducible over $R$} if $f$ is not irreducible over $R$.  

Recall that an \emph{algebraic number} is a possibly complex number that is a root of a finite, nonzero polynomial in one variable with rational coefficients (or equivalently, by clearing the denominators, with integer coefficients).  Given an algebraic number $\alpha$, there is a unique monic polynomial with rational coefficients of least degree that has the number as a root. This polynomial is called the \emph{minimal polynomial} for $\alpha$, and if $\alpha$ is a root of a polynomial $f$ with rational coefficients, then the minimal polynomial for $\alpha$ divides $f$ over the rationals. 
If the minimal polynomial has degree $k$, then the algebraic number $\alpha$ is said to be of degree $k$.  For instance, an algebraic number of degree one is a rational number.  An \emph{algebraic integer} is an algebraic number that is a root of a polynomial with integer coefficients with leading coefficient $1$ (a monic polynomial).   The question of whether a monic polynomial $f$ with integer coefficients has an irreducible degree $k$ factor when factored over the rationals is thus equivalent to whether $f$ has a root $\alpha$ that is an algebraic number of degree $k$; in fact, by Gauss's Lemma (see for instance \cite{DF}), $f$ being monic implies that $\alpha$ is an algebraic integer.

Let $f$ be a polynomial of degree $n$ over $\mathbb{C}$.  We let $\lambda_1(f), \ldots, \lambda_n(f) \in \mathbb{C}$ denote the zeros (counted with multiplicity) of $f$, and we define 
\begin{equation} \label{def:Lambda}
	\Lambda(f) := \{ \lambda_1(f), \ldots, \lambda_n(f) \} 
\end{equation}
to be the set of zeros of $f$.  

\subsection{Models of random monic polynomials with integer coefficients}

As mentioned above, there are many ensembles of random polynomials.  We begin with the most general ensemble of random monic polynomials with integer coefficients.  

\begin{definition}[Random monic polynomial] \label{def:monicpoly}
We say $f(z) := z^n + \xi_{n-1} z^{n-1} + \cdots + \xi_1 z + \xi_0$ is a degree $n$ \emph{random monic polynomial with integer coefficients} if $\xi_{n-1}, \ldots, \xi_0$ are integer-valued random variables (not necessarily independent).  
\end{definition}

We emphasize that the integer-valued random variables $\xi_{n-1}, \ldots, \xi_0$ are not assumed to be independent or identically distributed.  There are many examples of such random polynomials.  

\begin{example}[Independent Rademacher coefficients] \label{example:iidcoef}
Let $\xi_0, \ldots, \xi_{n-1}$ be independent Rademacher random variables, which take the values $+1$ or $-1$ with equal probability.  Then $f(z) := z^n + \xi_{n-1} z^{n-1} + \cdots + \xi_1 z + \xi_0$ is a random monic polynomial with integer coefficients.  More generally, one can consider the case when $\xi_0, \ldots, \xi_{n-1}$ are independent and identically distributed (iid) copies of an integer-valued random variable (not necessarily Rademacher); see Example \ref{example:uniform} below for one such example.    
\end{example}

\begin{example}[Independent uniform coefficients] \label{example:uniform}
Let $N \in \mathbb{N}$ be a given parameter.  Let $\xi_0, \ldots, \xi_{n-1}$ be independent and identically distributed (iid) random variables uniformly distributed on the discrete set $\{0, 1, \ldots, N\}$.  Then $f(z) := z^n + \xi_{n-1}z^{n-1} + \cdots + \xi_1 z + \xi_0$ is a random monic polynomial with integer coefficients.  
\end{example}

\begin{example}[Characteristic polynomial of random matrices]
Let $\xi$ be an integer-valued random variable, and let $\mat X$ be an $n \times n$ random matrix whose entries are iid copies of $\xi$.  Then the characteristic polynomial $f(z) := \det (z \mat I - \mat X)$ is a random monic polynomial with integer coefficients.  Here, $\mat I$ denotes the identity matrix.  
\end{example}

\begin{example}[Random permutation matrices]
Let $\pi$ be a random permutation on $\{1, \ldots, n\}$ uniformly sampled from all $n!$ permutations.  Let $\mat P_\pi$ denote the corresponding permutation matrix, i.e., the $(i,j)$-entry of $\mat P_{\pi}$ is one if $i = \pi(j)$ and zero otherwise.  Clearly, $\mat P_{\pi}$ is an orthogonal matrix.  The permutation $\pi$ may be written as a product of $\ell$ disjoint cycles with lengths $c_1,\dots,c_\ell$.  Let $f_{\pi}$ denote the characteristic polynomial of $\mat P_{\pi}$.  Then, as can be seen by reordering the rows and columns of $\mat P_\pi$ so that it is block diagonal, we have 
$$ f_{\pi}(z) := \det(z \mat I - \mat P_{\pi})= \prod_{j=1}^\ell (z^{c_j}-1),$$
where $\mat I$ is the identity matrix.  Clearly $1$ is always a root of $f_{\pi}$, making $z-1$ a factor and $f_{\pi}$ reducible.  In addition, $f_{\pi}$ will have other (possibly repeated) factors as well if $n$ is composite or if the number of cycles $\ell$ is at least 2.  One way to measure randomness in the roots of a random polynomial is testing whether the polynomial has any double roots.  For example, Tao and Vu \cite{TV} have shown that the spectrum of a random real symmetric $n$ by $n$ matrix with independent entries contains no double roots with probability tending to $1$ as $n$ increases (see also 
\cite{FS, PSZ}
for a related question on another class of random polynomials).  For contrast, in the case of the characteristic polynomial of a random permutation matrix, the probability that the spectrum contains no double roots is the same as the probability of the permutation having only one cycle, which occurs with probability $1/n$ and tends to zero, rather than 1.

\end{example}

\begin{example}[Erd\H os--R\'enyi random graphs]
Let $G(n,p)$ be the Erd\"os--R\'enyi random graph on $n$ vertices with edge density $p$.  That is, $G(n,p)$ is a simple graph on $n$ vertices (which we shall label as $\{1, \ldots, n\}$) such that each edge $\{i,j\}$ is in $G(n,p)$ with probability $p$, independent of other edges.  In the special case when $p=1/2$, one can view $G(n,1/2)$ as a random graph selected uniformly among all $2^{\binom{n}{2}}$ simple graphs on $n$ vertices.  The random graph $G(n,p)$ can be defined by its adjacency matrix $\mat A_n$, which is a real symmetric matrix with entry $(i,j)$ equal to 1 if there is an edge between vertices $i$ and $j$, and the entry equal to zero otherwise.  It is widely believed (and numerical evidence suggests) that the characteristic polynomial of $\mat A_n$ is irreducible with probability tending to one as $n \to \infty$.  We discuss this example more in Section \ref{ss:simplegraphs} and Section \ref{sec:control}.  
\end{example}

We have chosen to focus on monic polynomials, but the question of irreducibility can also be asked for non-monic random polynomials with integer coefficients (or equivalently, by dividing by the leading coefficient, for random monic polynomials with rational coefficients).  For fixed-degree polynomials with independent coefficients, this question was addressed by Kuba \cite{Kuba}.  When the degree tends to infinity, we again expect the answer to depend on the random polynomial model.

\subsection{Main results}
In this paper, we focus on the algebraic degree of the roots of a random monic polynomial $f$.  
Our main result below bounds above the probability that $f$ has an algebraic root of degree $k$, for some given value of $1 \leq k \leq n$, which is related to the question of irreducibility since a monic polynomial with integer coefficients is irreducible if and only if its roots are all algebraic of degree $n$.  We expect many random monic polynomial models to yield irreducible polynomials with high probability, and so intuitively, algebraic roots of small degree should be rare.  

\begin{theorem} \label{thm:main}
Let $f$ be a degree $n$ random monic polynomial with integer coefficients (as in Definition \ref{def:monicpoly}).  Let $M > 0$ and $2 \leq k \leq n$. Take $\Omega \subseteq \{z \in \mathbb{C} : |z| \leq M \}$, and
suppose there exists $p \in [0,1]$ such that
\begin{equation} \label{eq:zerobnd}
	\sup_{z \in \Omega} \Prob ( f(z) = 0 ) \leq p 
\end{equation}
(in other words, pointwise delocalization holds on $\Omega$).
Then, 
the probability that $f$ has an algebraic root of degree at most $k$ in $\Omega$ is at most
\begin{equation} \label{eq:sumprobbnd}
	p (eM)^{k^2}+ \Prob(\abs{\lambda_i(f)} > M\mbox{ for some } i),
\end{equation}
where $\lambda_1(f), \ldots, \lambda_n(f)$ are the roots of $f$.  If $k=1$, the result holds if $p(eM)^{k^2}$ is replaced with $p(3M)$ in \eqref{eq:sumprobbnd}.
\end{theorem}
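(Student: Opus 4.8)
The plan is to condition on all zeros of $f$ lying in $\{z:|z|\le M\}$, then exploit the elementary fact that a factor of $f$ of degree $j$ forces $f$ to vanish at the $j$ zeros of that factor; once the zeros are confined to a bounded disk there are only finitely many candidate algebraic points, and \eqref{eq:zerobnd} can be fed into a union bound over them. Concretely, let $\mathcal B$ be the event $\{|\lambda_i(f)|\le M\text{ for all }i\}$ and let $\mathcal A$ be the event that $f$ has an algebraic zero of degree at most $k$ lying in $\Omega$. Since $\Prob(\mathcal A)\le\Prob(\mathcal A\cap\mathcal B)+\Prob(\mathcal B^c)$ and $\Prob(\mathcal B^c)=\Prob(|\lambda_i(f)|>M\text{ for some }i)$ is exactly the second term of \eqref{eq:sumprobbnd}, it suffices to prove $\Prob(\mathcal A\cap\mathcal B)\le p(eM)^{k^2}$ (and $\le p(3M)$ when $k=1$).

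\emph{From a factor to a point.} On $\mathcal A\cap\mathcal B$, fix an algebraic zero $\alpha\in\Omega$ of $f$ of degree $j$ with $1\le j\le k$, and let $g$ be its minimal polynomial. Because $f$ is monic with integer coefficients, Gauss's Lemma (recalled in the introduction) implies that $g$ is a monic polynomial of degree $j$ with integer coefficients, and $g$ divides $f$; hence every zero of $g$ is a zero of $f$, and therefore lies in $\{z:|z|\le M\}$ on the event $\mathcal B$. Thus $g$ belongs to the deterministic, finite set $\mathcal G_j$ of monic integer polynomials of degree $j$ all of whose zeros have modulus at most $M$, while $\alpha\in\Lambda(g)\cap\Omega$ and $f(\alpha)=0$. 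It follows that
\[ \mathcal A\cap\mathcal B\ \subseteq\ \bigcup_{j=1}^{k}\ \bigcup_{g\in\mathcal G_j}\ \bigcup_{\beta\in\Lambda(g)\cap\Omega}\ \{f(\beta)=0\}. \]

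\emph{Counting and union bound.} If $g(z)=z^j+a_{j-1}z^{j-1}+\cdots+a_0\in\mathcal G_j$ has zeros $\beta_1,\dots,\beta_j$, then $|a_{j-i}|$ is the absolute value of the $i$th elementary symmetric function of $\beta_1,\dots,\beta_j$, hence $|a_{j-i}|\le\binom ji M^i$; since the $a_{j-i}$ are integers, $|\mathcal G_j|\le\prod_{i=1}^{j}(2\binom ji M^i+1)$. Each $g\in\mathcal G_j$ has at most $j$ zeros, and $\Prob(f(\beta)=0)\le p$ for every $\beta\in\Omega$ by \eqref{eq:zerobnd}, so the union bound gives
\[ \Prob(\mathcal A\cap\mathcal B)\ \le\ p\,\sum_{j=1}^{k}\,j\prod_{i=1}^{j}\Bigl(2\binom ji M^i+1\Bigr). \]
For $k=1$ the right-hand side is $p(2\lfloor M\rfloor+1)\le p(3M)$ when $M\ge1$, which is the stated bound. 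For $k\ge 2$ one is left with the purely numerical inequality $\sum_{j=1}^{k}j\prod_{i=1}^{j}(2\binom ji M^i+1)\le(eM)^{k^2}$ for $M\ge1$; this is proved by bounding $2\binom ji M^i+1\le3\binom ji M^i$, estimating $\prod_{i=1}^{j}\binom ji\le e^{j^2/2}$ (Stirling's formula, or the identity $\prod_{i=0}^{j}\binom ji=\prod_{\ell=1}^{j}\ell^{\,2\ell-j-1}$), and using $\sum_{i=1}^{j}i=j(j+1)/2\le j^2$, so that the sum is controlled by its last term. If instead $M<1$, then the conjugates of any algebraic-integer zero of $f$ also lie in $\{z:|z|\le M\}$, so by Kronecker's theorem that zero must be $0$, and $\mathcal A\cap\mathcal B\subseteq\{f(0)=0,\ 0\in\Omega\}$ already has probability at most $p$; hence one may assume $M\ge1$.

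The one genuinely load-bearing step is \emph{``from a factor to a point''}: although nothing is assumed about the (possibly highly dependent) coefficients of $f$, having a factor of degree $j$ is equivalent to $f$ vanishing at the zeros of some monic integer polynomial of degree $j$, and on $\mathcal B$ these zeros range over a finite, explicitly size-bounded set of points---precisely the situation in which \eqref{eq:zerobnd} bites. The only part that requires computational care is the final numerical inequality: the constant $e$ is close to sharp for small $k$ (already at $k=2$, $M=1$ the left side equals $33$ while $(eM)^{k^2}=e^4\approx 54.6$), so the crudest estimates for $\prod_i\binom ji$ must be supplemented by checking the smallest few values of $k$ directly.
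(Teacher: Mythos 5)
Your proposal is correct and follows essentially the same route as the paper: condition on all roots lying in $\{|z|\le M\}$, reduce the candidate algebraic roots to a finite deterministic set by bounding the integer coefficients of a degree-$\le k$ monic divisor via elementary symmetric functions, apply the pointwise delocalization bound together with the union bound, and finish with the Stirling-type estimate $\sum_{j=1}^{k}j\prod_{i=1}^{j}(2\binom{j}{i}M^i+1)\le(eM)^{k^2}$ (this is the paper's Lemma~\ref{prop:bounds}). The only difference is cosmetic — you enumerate candidate minimal polynomials $\mathcal G_j$ and then their roots, whereas the paper packages the union of those root sets into a single set $S$ in Lemma~\ref{lemma:counting} — and you add a brief (and not fully airtight for $M<1/e$, though immaterial in every application) remark about the regime $M<1$, which the paper silently leaves aside since Lemma~\ref{prop:bounds} assumes $M\ge1$.
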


For Theorem~\ref{thm:main} to be useful, one needs to show that the bound \eqref{eq:sumprobbnd} is small. In Lemma~\ref{lem:collected-bounds} we collect bounds on $p(eM)^{k^2}$ that hold for specific random polynomial models that we will discuss in Section~\ref{sec:examples}.

\begin{lemma}\label{lem:collected-bounds}
We have the following bounds on $p(eM)^{k^2}$ for various values of $p$ (the pointwise delocalization parameter), $M$ (the radius containing $\Omega$), and $k$ (the degree).
\begin{enumerate}[(i)]
\item If $p=O(1/\sqrt n)$ and $M=2$ and $k \le \sqrt{\frac{\log n}{4}}$, then $p(eM)^{k^2} = o(1)$.
\item If $p=\left(\frac1{\sqrt 2} + o(1)\right)^n$ and $M=n$ and $k=n^{1/2 -\epsilon}$ for some $\epsilon >0$, then $p(eM)^{k^2} =\left(\frac1{\sqrt 2} + o(1)\right)^n$ (the two $o(1)$ terms differ).
\item If $p = 2e^{-n^c}$ for some $0<c<1$, $M = C\sqrt n$ for some $C >0$, and $k \le n^{c'}$ for $c'< c/2$, then $p(eM)^{k^2} \le 2 \exp\left({-\pfrac{2}{3}n^c}\right)$ for sufficiently large $n$.
\item Let $B>0$ and $m\ge 1$, and take $M = n^m$ and $k \geq 1$ constant.  Then there exists $B' > 0$ (depending only on $B, m$, and $k$) such that if $p=n^{-B'}$, then $p(eM)^{k^2} \le n^{-B}$ for sufficiently large $n$.
\end{enumerate}

\end{lemma}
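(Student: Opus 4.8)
The plan is to prove each of the four bounds by taking (natural) logarithms, turning the multiplicative claim on $p(eM)^{k^2}$ into an additive comparison of exponents that follows directly from the stated hypotheses on $p$, $M$, and $k$. The single uniform observation is that $\log\left((eM)^{k^2}\right) = k^2(1+\log M)$, so in each part we only need to compare $k^2(1+\log M)$ against $-\log p$.

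For part~(i), using $k^2 \le \tfrac14\log n$ and $M = 2$ gives $(eM)^{k^2} = (2e)^{k^2} \le (2e)^{(\log n)/4} = n^{(1+\log 2)/4}$. Since $\log(2e) = 1 + \log 2 \approx 1.69 < 2$, the exponent $(1+\log 2)/4$ is strictly less than $\tfrac12$, so $p(eM)^{k^2} = O\left(n^{(1+\log 2)/4 - 1/2}\right) = o(1)$; the only thing to watch is retaining this numerical constant, which is exactly what provides the needed slack. Part~(iv) runs the same estimate in reverse: with $k$ constant, $(eM)^{k^2} = e^{k^2}n^{mk^2} \le n^{mk^2+1}$ once $n$ is large enough that $e^{k^2}\le n$, so choosing $B' := B + mk^2 + 1$ (which depends only on $B$, $m$, and $k$) yields $p(eM)^{k^2} \le n^{-B}$ for $n$ sufficiently large.

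For parts~(ii) and~(iii) the point is that $k^2(1+\log M)$ is polynomially smaller than the rate appearing in $p$. In~(iii), $k^2 \le n^{2c'}$ with $2c' < c$ and $\log M = \log(C\sqrt n) = O(\log n)$, so $k^2(1+\log M) = O(n^{2c'}\log n) = o(n^c)$, hence at most $\tfrac13 n^c$ for $n$ large; then $p(eM)^{k^2} \le 2e^{-n^c}e^{n^c/3} = 2\exp\left(-\pfrac{2}{3}n^c\right)$. In~(ii), $k^2 = n^{1-2\epsilon}$ and $\log M = \log n$, so $k^2(1+\log M) = O(n^{1-2\epsilon}\log n) = o(n)$, while $-\log p = n\left(\tfrac12\log 2 + o(1)\right)$; absorbing the lower-order $o(n)$ contribution into the $o(1)$ in the exponent gives $p(eM)^{k^2} = \exp\left(-n(\tfrac12\log 2 + o(1))\right) = \left(\tfrac1{\sqrt2}+o(1)\right)^n$, with a (generally) different $o(1)$ than the one in the hypothesis on $p$. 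No step here is a genuine obstacle; the only care required is bookkeeping — keeping the constant $\log(2e)$ in~(i) so as to land on $o(1)$ rather than merely $O(1)$, and correctly merging the $o(n)$ and $o(1)$ error terms in~(ii).
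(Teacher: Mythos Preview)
Your proof is correct and follows essentially the same approach as the paper: in each part you take logarithms and compare $k^2(1+\log M)$ against $-\log p$, exactly as the paper does. The only cosmetic differences are in the choice of $B'$ in part~(iv) (you take $B'=B+mk^2+1$, the paper takes $B'=B+2mk^2$) and slightly different intermediate bookkeeping, but the arguments are the same.
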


\subsection{Random polynomials over finite fields}
There are, of course, many other ensembles of random polynomials one can consider.  For instance, one can study monic polynomials over the finite field $\mathbb{F}_q$, where $q$ is a power of a prime.  Indeed, there are $q^n$ monic polynomials of degree $n$ over $\mathbb{F}_q$, and we can consider selecting one uniformly at random.  Using Galois theory for finite fields and M\"{o}bius inversion (see \cite[Section 14.3]{DF}), one can show that the number of degree $n$ irreducible polynomials over $\mathbb F_q$ is
$$\frac1n \sum_{d | n}\mu(d) q^{n/d},$$
where $\mu$ is the M\"{o}bius function.  Thus, the probability that a
randomly selected degree $n$ monic polynomial over $\mathbb{F}_q$ is irreducible is
$$ \frac{1}{nq^n}  \sum_{d | n} \mu(d) q^{n/d} = \frac{1}{n} + O(q^{-n/2}), $$
(using the coarse bound  $\abs{\mu(d)} \le 1$) for any $n$ and $q$.  Thus, in a finite field, a degree $n$ polynomial chosen uniformly at random is irreducible only with probability close to $1/n$.  This contrasts sharply with the case of polynomials over the integers, where Hilbert's Irreducibility Theorem shows that at randomly chosen polynomial is very likely to be irreducible (see, for example, \cite{Z}).


\subsection{Overview and outline}
The paper is organized as follows.  In Section~\ref{sec:examples}, we give some example applications of our main results, including the cases of random polynomials with iid coefficients, the characteristic polynomial of random matrices (non-symmetric, non-symmetric sparse, symmetric, and elliptical), and adjacency matrices of random graphs (directed, undirected, and fixed outdegree).   Often we will consider the case where the underlying random variables are Rademacher $\pm 1$ for simplicity.  Section~\ref{sec:control} motivates the model of random polynomials studied in this paper by illustrating a connection that exists between irreducible random polynomials, random graphs, and control theory on large scale graphs and networks.  Theorem~\ref{thm:main} and Lemma~\ref{lem:collected-bounds} are proven in Section~\ref{sec:main}.  
Finally, Section~\ref{sec:iidpoly} contains the proof for one of the applications discussed in Section~\ref{sec:examples}.

\subsection{Notation}
We use asymptotic notation (such as $O, o$) under the assumption that $n \to \infty$.  In particular, $o(1)$ denotes a term which tends to zero as $n \to \infty$.  
Let $[n] := \{1, \ldots, n\}$ denote the discrete interval.  We let $\sqrt{-1}$ denote the imaginary unit and reserve $i$ as an index.  For a finite set $S$, we use $|S|$ to denote the cardinality of $S$.  
For a vector $v$, we use $\|v\|$ for the Euclidean norm.  We let $u^\mathrm{T} v = u \cdot v$ denote the dot product between two vectors $u, v \in \mathbb{R}^n$.  For a matrix $\mat A$, we let $\| \mat A\|$ denote the spectral norm, i.e., $\|\mat A\|$ is the largest singular value of $\mat A$.  We let $\mat I_n$ denote the $n \times n$ identity matrix; often we will drop the subscript $n$ when its size can be deduced from context.  For a polynomial $f$, $\deg(f)$ denotes the degree of $f$.

\begin{figure}
\includegraphics[scale=.35]{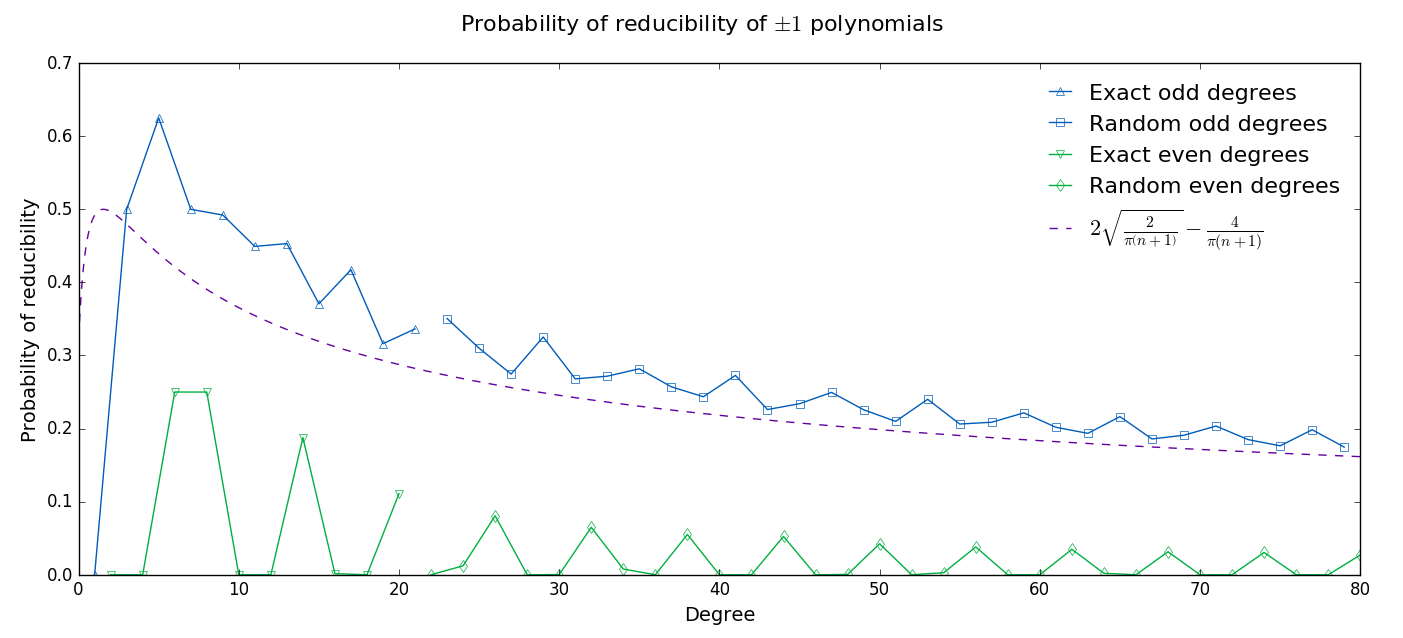}
\caption{Quoted from \cite[Figure~3]{BBBSWW}, the above shows the probability that 
 $f_n(x) = x^n + \xi_{n-1}x^{n-1} + ... + \xi_1 x + \xi_0$ is irreducible, where the $\xi_i$ are $+1$ or $-1$ independently with probability $1/2$.  
For degree up to $n=21$, the probability was computed exactly by exhaustively generating all $2^{n}$ polynomials of the specified degree and checking each one for reducibility using the \texttt{IsIrreducible()} function in Magma. For degree 22 up to 80, the probability was estimated (again using Magma) by generating 150,000,000 random polynomials for each degree.
%
The curve 
$2\sqrt{\frac{2}{\pi( n+1)}}-\frac{4}{\pi (n+1)}$
is an asymptotic lower bound for the probability of reducibility when the degree is odd.
%
Figure produced by 
Christian Borst, 
Evan Boyd, 
Claire Brekken, 
and 
Samantha Solberg 
 (see \cite{BBBSWW}).}
\label{fig:pm1}
\end{figure}

\section{Example applications of the main results} \label{sec:examples}

We now specialize Theorem~\ref{thm:main} to some specific examples.  

\subsection{Random polynomials with iid coefficients}
We now consider Example \ref{example:iidcoef}, where the coefficients of $f$ are iid random variables.  

\begin{theorem}[Random polynomials with iid coefficients] \label{thm:iidpoly}
For each $n \geq 1$, let $f_n(z) = z^n + \xi_{n-1} z^{n-1} + \cdots + \xi_1 z + \xi_0$, where $\xi_0, \xi_1, \ldots$ are iid Rademacher random variables, which take the values $+ 1$ or $-1$ with equal probability.  Then the probability that $f_n(x)$ has an algebraic root of degree at most $\frac{n^{1/3}}{\log^3 n}$ is at most $O\pfrac{1}{\sqrt n}$.
\end{theorem}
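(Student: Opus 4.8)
## Proof Proposal

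The plan is to apply Theorem~\ref{thm:main} with the right choice of parameters and to supply the two inputs it requires: a pointwise delocalization bound $p$ and a bound on the probability that some root of $f_n$ escapes a disk of radius $M$. First I would handle the location of the roots. For a monic polynomial $z^n + \xi_{n-1}z^{n-1} + \cdots + \xi_0$ with $|\xi_j| \le 1$, every root $\lambda$ satisfies $|\lambda| \le 2$ by the standard Cauchy bound (if $|\lambda| > 2$ then $|\lambda|^n > |\xi_{n-1}||\lambda|^{n-1} + \cdots + |\xi_0|$, contradicting $\lambda$ being a root). Hence with $M = 2$ the term $\Prob(|\lambda_i(f_n)| > M \text{ for some } i)$ is identically zero, and we may take $\Omega = \{z : |z| \le 2\}$.

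Next I would establish the pointwise delocalization bound $\sup_{z \in \Omega} \Prob(f_n(z) = 0) \le p$ with $p = O(1/\sqrt n)$. This is the content of a small-ball / anticoncentration estimate: fixing $z$ with $|z| \le 2$, the quantity $f_n(z) = z^n + \sum_{j=0}^{n-1} \xi_j z^j$ is (up to the deterministic shift $z^n$) a linear combination of independent Rademacher variables with coefficients $a_j = z^j$. By the Erd\H{o}s--Littlewood--Offord inequality (or its complex-valued refinement via the Esseen / Kolmogorov--Rogozin inequality applied to the real and imaginary parts simultaneously), the probability that such a sum hits any fixed value is $O(1/\sqrt{m})$ provided at least $m$ of the coefficients $a_j$ are bounded away from zero in a suitable quantitative sense. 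The point is to choose a set of indices $j$ for which $|z|^j$ is not too small: if $|z| \ge 1/2$, say, then already $|z|^j \ge 2^{-j}$ is adequate for $j = 0, 1$ and one can extract $\Theta(n)$ well-separated coefficients; if $|z|$ is small, then the low-order coefficients $\xi_0, \xi_1, \dots$ dominate and one argues via the constant term and a few neighbors. A clean way to package all cases is: condition on all but roughly $\log_2 n$ of the variables and note the conditional distribution is supported on $2^{\log_2 n} \asymp \sqrt n$ equally spaced points in a suitable direction, or more robustly just cite the uniform Littlewood--Offord-type bound. I expect this anticoncentration step to be the main obstacle, because it must hold \emph{uniformly} over the whole disk $|z| \le 2$, including near $z = 0$ (where only finitely many coefficients matter) and near $|z| = 1$ (where there is no decay); the details of this case analysis, or locating the exactly right reference, is where the real work lies. (This is presumably exactly what Section~\ref{sec:iidpoly} carries out.)

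With $p = O(1/\sqrt n)$, $M = 2$, and the degree threshold $k = \frac{n^{1/3}}{\log^3 n}$, I would invoke part (i) of Lemma~\ref{lem:collected-bounds} — or rather a direct computation in the same spirit. We need $p(eM)^{k^2} = O(1/\sqrt n)$, i.e. $(2e)^{k^2}$ should be $O(1)$ relative to the slack, but here $k^2 = n^{2/3}/\log^6 n \to \infty$, so $(2e)^{k^2}$ is \emph{not} bounded; this means we cannot simply absorb it. The resolution is that the conclusion only asks for the final probability to be $O(1/\sqrt n)$, and in fact Theorem~\ref{thm:main}'s bound $p(eM)^{k^2}$ with $p = O(1/\sqrt n)$ gives $O(n^{-1/2} (2e)^{n^{2/3}/\log^6 n})$, which blows up — so I must reexamine. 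The correct reading: the available pointwise bound for this model is actually much stronger than $1/\sqrt n$ for most $z$, OR one applies the non-collected form of Theorem~\ref{thm:main} (the commented-out bound $pk\prod_{j=1}^k(2\binom kj M^j + 1)$) which for $M = 2$ and $k = o(n^{1/3})$ gives $pk \cdot \prod_{j=1}^k(2\binom kj 2^j + 1) \le p \cdot 2^{O(k^2)}$ — same issue. Thus the genuine content must be a delocalization bound of the form $p \le C \cdot 2^{-k^2}/\sqrt n$ available uniformly, or the statement is obtained by a sharper, model-specific argument in Section~\ref{sec:iidpoly} that improves the exponential loss; accordingly my plan is to prove the uniform small-ball bound $\sup_{|z|\le 2}\Prob(f_n(z)=0) \le C/\sqrt n$ as above, and then carry out the union-bound-over-minimal-polynomials argument of Theorem~\ref{thm:main} directly, tracking that for $k \le n^{1/3}/\log^3 n$ the number of candidate algebraic integers of degree $\le k$ with all conjugates in $|z| \le 2$ is at most $(eM)^{k^2} = e^{O(n^{2/3}/\log^6 n)}$, which is $n^{o(1)}$ — wait, it is not; $n^{2/3} \gg \log n$. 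So the genuine saving is that $k^2 \log(eM) = O(n^{2/3}\log^6 n \cdot{}^{-6}) = O(n^{2/3}/\log^6 n) \cdot \log(2e)$, still super-logarithmic. I therefore expect that the actual proof uses a delocalization estimate that is exponentially small off a thin set and combines it with the bounded-degree count so that the product is $O(1/\sqrt n)$; reconstructing that precise trade-off is the crux, and it is what I would spend the bulk of the argument on, deferring to the structure already signposted for Section~\ref{sec:iidpoly}.
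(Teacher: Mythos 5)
Your proposal correctly identifies that a naive invocation of Theorem~\ref{thm:main} with $p = O(1/\sqrt n)$ and $M = 2$ cannot reach $k = n^{1/3}/\log^3 n$, because $(2e)^{k^2}$ explodes; as you note, Lemma~\ref{lem:collected-bounds}(i) only reaches $k \lesssim \sqrt{\log n}$ with that input. But you stop at diagnosing the obstruction and gesture at ``a sharper, model-specific argument'' without supplying it, and the mechanism you hypothesize (a uniform bound $p \le C\cdot 2^{-k^2}/\sqrt n$) is not what happens. The actual proof does not apply Theorem~\ref{thm:main} at all; it works directly with the counting lemma and a union bound, and the extra power comes from two ideas you do not mention.

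The first is a dichotomy between \emph{cyclotomic} and \emph{non-cyclotomic} candidate minimal polynomials. For a non-cyclotomic irreducible $g$ of degree $d\ge 3$, Dobrowolski's theorem guarantees a root $w$ with $|w| \ge 1 + \frac{c}{d}(\frac{\log\log d}{\log d})^3$, hence $|w|\ge 1 + c/n^{1/3}$ for $d \le n^{1/3}/\log^3 n$. Feeding the geometric subsequence $w^0, w^b, w^{2b},\dots$ (with $b\asymp n^{1/3}$) into the Tao--Vu geometric-growth anticoncentration bound (Lemma~\ref{lemma:loc}) yields $\Prob(f_n(w)=0) \le C\exp(-cn^{2/3})$, which is \emph{exponentially} small and comfortably beats the $(2e)^{d^2}\le\exp(O(n^{2/3}/\log^6 n))$ count of candidate minimal polynomials. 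Your uniform Littlewood--Offord bound of $O(1/\sqrt n)$ is far too weak for this step precisely because it must also cover $|z|=1$, where Dobrowolski-type repulsion is unavailable; the paper simply never evaluates the pointwise bound on the unit circle in the non-cyclotomic case. The second idea is the cyclotomic case, where no Dobrowolski saving exists: there one exploits $\alpha^k=1$ to write $f_n(\alpha)=\sum_{j<k}A_j\alpha^j$ with the $A_j$ independent Rademacher sums over residue classes, applies Littlewood--Offord to each coordinate to get $\Prob(\Phi_k\mid f_n)\le O(\sqrt k/\sqrt n)^d$, and combines this with Pomerance's bound $N(d)\le cd$ on the number of cyclotomic polynomials of degree $d$. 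Both ingredients are absent from your sketch, so as written the argument has a genuine gap at exactly the point you flagged.
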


\begin{remark}\label{rem:weaker_iidpoly}
Note that a weaker version of Theorem~\ref{thm:iidpoly} follows easily from Theorem~\ref{thm:main}. In particular, by Lemma~\ref{lemma:iidpoly122} all roots for $f_n$ have absolute value between $1/2$ and $2$, and by the Littlewood-Offord Theorem (see, e.g., \cite[Corollary~7.8]{TVaddcomb}) $\Pr(f_n(z)=0) \le O(1/\sqrt n)$ for any $z$.  Thus, Lemma~\ref{lem:collected-bounds}(i) combined with Theorem~\ref{thm:main} implies that $f_n$ has no algebraic roots of degree at most $\sqrt{\frac{\log n}{4}}$ with probability tending to $1$ as $n$ tends to infinity.
\end{remark}

We present a proof of Theorem~\ref{thm:iidpoly} in Section~\ref{sec:iidpoly}, and below we will comment on potential generalizations of Theroem~\ref{thm:iidpoly} and its connections to the work of Konyagin \cite{K01}. See Figure~\ref{fig:pm1} for numerical evidence suggesting that, in fact, the probability that $f_n$ is reducible goes to zero as $n \to \infty$.

Beyond Theorem~\ref{thm:iidpoly}, our methods can also be used when $\xi_0, \xi_1, \ldots$ are more general iid integer-valued random variables satisfying some technical assumptions.  However, a number of complications can arise in this case (e.g., zero is always a root of $f_n$ with probability $\Prob(\xi_0 = 0)$), and so
we focus on the Rademacher $\pm 1$ case for simplicity.  

In \cite{K01}, Konyagin studies the random degree $n$ polynomial $g_n$ which has $1$ for the constant coefficient and the degree $n$ coefficient, and every other coefficient is $0$ or $1$ independently with equal probability.  In particular, he shows that there are constants $c,C >0$ such that $g_n$ has a root that is an algebraic number with degree at most $ cn/\log n$ with probability at most $C/\sqrt n$.
%
Konyagin's approach in \cite{K01} can also be adapted to more general distributions of the random integer coefficients (see forthcoming work of Terlov \cite{Terlov}); however, the method seems to require independence of the coefficients, whereas an application of Theorem~\ref{thm:main} would allow for dependence, though at the expense of a weaker bound on the size of the low-degree factors.

Finally, one should note that elementary Galois theory can be used to prove that 
if $n+1$ is prime and $2$ generates the multiplicative group\newline  $\Big(\mathbb Z/(n+1)\Big)^\times$, then
\emph{every} random polynomial of degree $n$ with coefficients iid Rademacher $\pm 1$ random variables (as in Theorem~\ref{thm:iidpoly}) is in fact irreducible.\footnote{We thank Melanie Matchett Wood for describing the formulation and proof of this result.}
  One can prove this by considering the polynomials modulo 2, in which case $+1=-1 \mod 2$ and every polynomial is equal to $x^n+x^{n-1}+x^{n-2}+\dots+1$ (i.e., there is no randomness); thus every root of the polynomial modulo $2$ must be a $(n+1)$-st root of unity.  To complete the argument, one can use the fact that $\mathbb F_{2^n}$ has cyclic multiplicative group and the fact that the Galois group $\operatorname{Gal}(\mathbb F_{2^n}/ \mathbb F_2)$ is also cyclic and generated by the Frobenius endomorphism $x\mapsto x^2$ (see \cite{DF}).  Interestingly, letting $p=n+1$ be a prime, Artin's Conjecture on primitive roots would imply that $2$ should generate $\left(\mathbb Z/(p)\right)^\times= \left(\mathbb F_p\right)^\times$ for infinitely many $p$, and in fact, the proportion of primes for which 2 generates $\left(\mathbb F_p\right)^\times$ should asymptotically approach Artin's constant, which is approximately $0.3739558136\dots$ (see the survey \cite{M}).

\subsection{Random matrices with iid Rademacher $\pm 1$ entries}
While delocalization estimates for random polynomials with iid coefficients are fairly weak, we now consider random matrices with independent entries, for which much better delocalization bounds are known.  Indeed, we will use the following theorem from \cite{BVW} to bound the supremum in \eqref{eq:zerobnd}.

\begin{theorem}[Bourgain-Vu-Wood, Corollary~3.3 in \cite{BVW}] \label{BVWcor3.3}
Let $q$ be a constant such that $0 < q \le 1$ and let $S \subset \mathbb C$ be a set with cardinality $\abs S = O(1)$.  If $\mat{M}_n$ is an $n$ by $n$ matrix with independent random entries taking values in $S$ such that for any entry $x_{ij}$, we have $\max_{s \in S}\Pr(x_{ij} = s) \le q$, then $$\Pr(\mat M_n \mbox{ is singular}) \le \left( \sqrt q + o(1) \right)^n.$$
Furthermore, by inspecting the proof one can see that the $o(1)$ error term depends only on $q$ and the cardinality of the set $S$, and not on the values in the set $S$.
\end{theorem}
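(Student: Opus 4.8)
The plan is to follow the Komlós / Kahn--Koml\'os--Szemer\'edi strategy for singularity probabilities of discrete random matrices, as carried out in \cite{BVW}. Write $\mat M_n$ via its columns $C_1,\dots,C_n$, which are independent random vectors with iid entries from $S$. First I would condition on $C_1,\dots,C_{n-1}$. If these $n-1$ columns are linearly dependent, then some fixed $(n-1)\times(n-1)$ submatrix is singular, an event whose probability is at most $(\sqrt q+o(1))^{n-1}$ by induction on $n$ (and can be absorbed into the claimed bound); otherwise $C_1,\dots,C_{n-1}$ span a hyperplane and we may choose a unit normal $\mathbf b=\mathbf b(C_1,\dots,C_{n-1})$, which is measurable with respect to $C_1,\dots,C_{n-1}$ and hence independent of $C_n$. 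Singularity of $\mat M_n$ then forces $\mathbf b\cdot C_n=0$, so it remains to bound $\E_{\mathbf b}\!\left[\,\Pr_{C_n}(\mathbf b\cdot C_n=0)\,\right]$, the expectation being over the random normal direction.

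The heart of the argument is a dichotomy on $\mathbf b$ driven by small-ball estimates. For a unit vector $\mathbf a$, set $\rho(\mathbf a):=\sup_{t\in\C}\Pr(\mathbf a\cdot C=t)$, the concentration function of a single random column tested against $\mathbf a$. An inverse Littlewood--Offord theorem shows that $\rho(\mathbf a)$ is appreciable only when $\mathbf a$ is \emph{structured} — essentially supported on $O(1)$ coordinates, or with entries confined to a short generalized arithmetic progression. One splits $\E_{\mathbf b}[\Pr_{C_n}(\mathbf b\cdot C_n=0)]$ according to the level of $\rho(\mathbf b)$. In the \emph{unstructured} regime, where $\rho(\mathbf b)$ is below a small threshold, an $\varepsilon$-net over the sphere together with tensorization over the $n$ coordinates of $C_n$ shows the contribution is at most $(C/\sqrt n)^n$ for an absolute constant $C$, which is negligible against $(\sqrt q+o(1))^n$. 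In the \emph{structured} regime one exploits that $\mathbf b$ is also orthogonal to the $n-1$ independent random columns $C_1,\dots,C_{n-1}$: for each (discretized) structured direction this event has probability at most $\rho(\mathbf b)^{n-1}$, and the number of such directions at a given level of $\rho$ is controlled; summing (number of directions)$\times\rho(\mathbf b)^{n-1}\times\rho(\mathbf b)$ over the dyadic levels of $\rho$ and optimizing yields the bound $(\sqrt q+o(1))^n$.

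The main obstacle is extracting the precise constant $\sqrt q$, which lives entirely in this last optimization. The dominant structured directions are the near-collision vectors, e.g.\ $\mathbf a$ supported on two coordinates such as $\mathbf a\approx(e_i-e_j)/\sqrt2$, for which $\rho(\mathbf a)=\sum_{s\in S}\Pr(\xi=s)^2\le(\max_s\Pr(\xi=s))\sum_{s\in S}\Pr(\xi=s)\le q$ (and, when $0\in S$, the single-coordinate directions $e_i$ with $\rho(e_i)\le q$); the delicate point is to count how many such directions can simultaneously be forced orthogonal to $C_1,\dots,C_{n-1}$ and to balance this count against the per-direction probability so that only $q^{n/2}$ survives up to a $C^{o(n)}$ factor. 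Finally, to see that the $o(1)$ depends only on $q$ and $|S|$ and not on the actual elements of $S$: every ingredient above — the Littlewood--Offord bounds, the cardinalities of the $\varepsilon$-nets, the enumeration of structured directions, and the collision probabilities $\sum_s\Pr(\xi=s)^2\le q$ — is expressed purely through the atom probabilities $\Pr(\xi=s)$ (each bounded by $q$) and their number $|S|$, while the real (or complex) values in $S$ never enter, since $\rank$, linear dependence, and orthogonality are invariant under relabelling the support. The corank-$\ge 2$ case and any small-degenerate configurations are absorbed by applying the same machinery recursively, contributing at a strictly smaller exponential rate.
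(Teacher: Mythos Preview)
This theorem is not proved in the paper at all: it is quoted verbatim as Corollary~3.3 of \cite{BVW} and then used as a black box (see the proof of Theorem~\ref{thm:iidpm1}). So there is no ``paper's own proof'' to compare your attempt against.

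That said, a couple of remarks on your sketch as a standalone attempt. First, a minor slip: you write that the columns have ``iid entries from $S$'', but the hypothesis only says the entries are independent, and in the application to $\mat M_n - z\mat I_n$ the diagonal and off-diagonal entries have genuinely different distributions; your argument should not assume identical distribution.

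More seriously, the appearance of the specific constant $\sqrt q$ is not accounted for in your outline. Your structured-regime computation gives, for the two-coordinate collision directions $e_i-e_j$, a contribution of order $n^2\,\rho^{\,n}$ with $\rho=\sum_s \Pr(\xi=s)^2\le q$, i.e.\ at most $n^2 q^n$, which is \emph{much smaller} than $(\sqrt q)^n$; and nothing in your unstructured-regime net argument produces $\sqrt q$ either. In \cite{BVW} the constant $\sqrt q$ does not come from a simple dichotomy of normal directions but from a replacement/swapping argument in the Kahn--Koml\'os--Szemer\'edi--Tao--Vu tradition: one compares $\Pr(X\in H)$ for a random column $X$ with the analogous probability for a simpler ``combinatorial'' random vector, and the loss incurred in each replacement is what produces $\sqrt q$ after iterating across the columns. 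That mechanism (and the entropy/Fourier estimate driving it) is absent from your sketch, so as written you have a plausible route to \emph{some} exponential bound $c^n$ with $c<1$, but not to the stated $(\sqrt q+o(1))^n$. Your final paragraph about uniformity in the values of $S$ is, however, correct in spirit: once the argument is set up properly, every estimate involves only the atom masses and $|S|$, never the actual complex values.
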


In \cite{BVW}, it was shown using the above result that an iid random Rademacher $\pm 1$ matrix (i.e., where each entry is $+1$ or $-1$ independently with probability $1/2$) is very unlikely to have a rational eigenvalue.  Our result below extends this fact by showing that, for any $\epsilon >0$, an eigenvalue that is algebraic with degree at most $n^{1/2-\epsilon}$ (which includes all rational numbers) is similarly unlikely.  Our approach here does not extend to algebraic degree $\sqrt n$ or larger; however, in analogy with Hilbert's Irreducibility Theorem and related results described in the introduction above, it seems likely that the characteristic polynomial of an iid random Rademacher $\pm 1$ matrix is in fact irreducible with high probability, which would imply that the matrix has no algebraic roots of degree less than $n$ (see \cite{BBBSWW} for supporting data).

\begin{theorem}\label{thm:iidpm1}
Let $\epsilon > 0$ be a constant, and let $\mat M_n$ be an $n$ by $n$ matrix where each entry takes the value $+1$ or $-1$ independently with probability $1/2$.  Then, the probability that $\mat M_n$ has an eigenvalue that is an algebraic number with degree at most $n^{1/2-\epsilon}$ is bounded above by $\displaystyle \left(\frac1{\sqrt 2} +o(1) \right)^n$.
\end{theorem}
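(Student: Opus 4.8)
The plan is to apply Theorem~\ref{thm:main}, drawing the pointwise delocalization bound from Theorem~\ref{BVWcor3.3} and the final numerical estimate from Lemma~\ref{lem:collected-bounds}(ii). I would take $f(z) := \det(z\mat I_n - \mat M_n)$, the characteristic polynomial of $\mat M_n$, which is a degree $n$ random monic polynomial with integer coefficients in the sense of Definition~\ref{def:monicpoly} and whose roots are exactly the eigenvalues of $\mat M_n$. I would then set $M := n$, $\Omega := \{z \in \C : |z| \le n\}$, and $k := \lfloor n^{1/2-\epsilon} \rfloor$ (so that $2 \le k \le n$ for all large $n$). Since every entry of $\mat M_n$ has modulus $1$, the operator norm obeys $\|\mat M_n\| \le n$ deterministically, so every eigenvalue lies in $\Omega$; hence the term $\Prob(|\lambda_i(f)| > M \text{ for some } i)$ in \eqref{eq:sumprobbnd} vanishes, and an eigenvalue of $\mat M_n$ that is algebraic of degree at most $n^{1/2-\epsilon}$ is exactly an algebraic root of $f$ of degree at most $k$ lying in $\Omega$.

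The key step is to verify the pointwise delocalization hypothesis \eqref{eq:zerobnd} on $\Omega$. Fixing $z \in \Omega$, one has $f(z) = 0$ precisely when $z\mat I_n - \mat M_n$ is singular. The entries of $z\mat I_n - \mat M_n$ are independent: each off-diagonal entry is $\pm 1$ with probability $1/2$ each, and each diagonal entry equals $z - 1$ or $z + 1$ with probability $1/2$ each. Thus all entries take values in the set $S_z := \{-1, 1, z-1, z+1\}$, which has cardinality at most $4$, with no value attained with probability exceeding $q := 1/2$. Theorem~\ref{BVWcor3.3}, applied with this $q$ and with $|S_z| = O(1)$, then yields $\Prob(z\mat I_n - \mat M_n \text{ singular}) \le (1/\sqrt 2 + o(1))^n$. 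The point I would stress here is that, by the ``furthermore'' clause of Theorem~\ref{BVWcor3.3}, the $o(1)$ error depends only on $q$ and on the cardinality bound $|S_z| \le 4$, so it is uniform in $z \in \Omega$; therefore \eqref{eq:zerobnd} holds with $p := (1/\sqrt 2 + o(1))^n$.

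To finish, I would invoke Lemma~\ref{lem:collected-bounds}(ii): with $p = (1/\sqrt 2 + o(1))^n$, $M = n$, and $k = n^{1/2-\epsilon}$, one obtains $p(eM)^{k^2} = (1/\sqrt 2 + o(1))^n$. Feeding this into \eqref{eq:sumprobbnd} of Theorem~\ref{thm:main}, together with the vanishing of the second term, gives that the probability $\mat M_n$ has an eigenvalue algebraic of degree at most $n^{1/2-\epsilon}$ is at most $(1/\sqrt 2 + o(1))^n$, as claimed.

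I expect the only genuinely delicate point to be the uniformity of the $o(1)$ in the delocalization estimate: one must use the second half of Theorem~\ref{BVWcor3.3} to ensure the error term depends only on $q$ and on the (uniform) bound $|S_z| \le 4$, not on the particular value of $z$, so that a single $p$ controls the supremum over all $z \in \Omega$. The remaining ingredients are routine: recognizing $f$ as a random monic integer polynomial, bounding the spectral radius crudely by $n$ to eliminate the ``escape of roots'' term, and reading the final bound off Lemma~\ref{lem:collected-bounds}(ii).
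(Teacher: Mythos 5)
Your proposal is correct and follows essentially the same route as the paper: identify the entries of $z\mat I_n - \mat M_n$ as lying in a four-element set with $q=1/2$, invoke the uniformity of the $o(1)$ in Theorem~\ref{BVWcor3.3} to get the pointwise delocalization bound over $\Omega = \{|z|\le n\}$, and then feed this into Theorem~\ref{thm:main} via Lemma~\ref{lem:collected-bounds}(ii). The only cosmetic difference is the sign convention $z\mat I_n - \mat M_n$ versus $\mat M_n - z\mat I_n$, which does not affect the argument.
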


\begin{proof}
Let $f$ be the characteristic polynomial of $\mat M_n$, so that the eigenvalues of $\mat M_n$ are the roots of $f$, all eigenvalues of $\mat M_n$ have absolute value at most $n$ with with probability 1 by an elementary bound.  (In fact, the eigenvalues of $M_n$ are all less than $O(\sqrt n)$ with exponentially high probability using, for example, \cite[Proposition~2.4]{RVnonasy}; we will not need such a refined bound here.)

Let $\Omega:=\{ z \in \mathbb C : \abs z \le n\}$.
Using Theorem~\ref{BVWcor3.3} above, we have for any $z\in \mathbb C$
that
\begin{equation}\label{eq:iidsingbd}
\Pr(f(z) = 0) = \Pr(\mat M_n - z\mat I_n \mbox{ is singular}) \le \left(\frac1{\sqrt 2} + o(1)\right)^n,
\end{equation}
where the $o(1)$ error is uniform for all $z \in \mathbb C$ (this follows using the facts that $\{1,-1, 1-z, -1-z\}$ is the set of values that can appear in $\mat M_n -z\mat I_n$ and that the cardinality of this set and the value of $q=1/2$ are the same for any $z \in \mathbb C$).
Thus,
$$\sup_{z \in \Omega} \Pr(f(z)=0) 
\le \left(\frac1{\sqrt 2} + o(1)\right)^n. $$
We now apply Lemma~\ref{lem:collected-bounds}(ii) and Theorem~\ref{thm:main} to complete the proof.
\end{proof}

\subsection{Random symmetric matrices}

In \cite{Vsym}, Vershynin proves a general result for real symmetric random matrices bounding the singularity probability, quantifying the smallest singular value, and showing that the spectrum is delocalized with the optimal scale.  Here, we will use the following special case showing only pointwise delocalization to illustrate an application of Theorem~\ref{thm:main}.  

\begin{theorem}[Vershynin, following from Theorem~1.2 in \cite{Vsym}] \label{thm:Vsym}
Let $B>0$ be a real constant and let $\mat M_n$ be a real symmetric $n$ by $n$ matrix whose entries $x_{ij}$ on and above the diagonal (so for $i\le j$) are iid random variables
with mean zero and unit variance 
satisfying $\abs{x_{ij}} \le B$. 
Then, there exists an absolute constant $c >0$ (depending only on $B$) such that, for every $r \in \mathbb R$,
\begin{equation} \label{eq:symbound}
	\Pr(r \mbox{ is an eigenvalue of } \mat M_n) \le 2e^{-n^c}.
\end{equation}
\end{theorem}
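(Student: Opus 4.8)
The plan is to reduce the eigenvalue bound to a quantitative invertibility statement for a shifted symmetric random matrix and then quote Vershynin's Theorem~1.2 from \cite{Vsym}. Fix $r \in \mathbb R$. Then $r$ is an eigenvalue of $\mat M_n$ exactly when $\det(\mat M_n - r\mat I_n) = 0$, i.e.\ when the real symmetric matrix $\mat M_n - r\mat I_n$ is singular, equivalently when its smallest singular value $s_{\min}(\mat M_n - r\mat I_n)$ vanishes. So it suffices to establish
\[
  \sup_{r \in \mathbb R} \Pr\big( s_{\min}(\mat M_n - r\mat I_n) = 0 \big) \le 2 e^{-n^c}
\]
for a suitable constant $c = c(B) > 0$.

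Next I would observe that $\mat M_n - r\mat I_n$ is a symmetric random matrix of the form ``mean-zero symmetric random matrix plus a fixed symmetric matrix'': its on- and above-diagonal entries are the iid variables $x_{ij}$, which have mean zero, unit variance, and (since $|x_{ij}| \le B$) subgaussian $\psi_2$-norm bounded in terms of $B$, and the matrix is shifted by the deterministic matrix $-r\mat I_n$. Vershynin's Theorem~1.2 in \cite{Vsym} treats precisely such ensembles and yields constants $C, c_1, c > 0$ depending only on $B$, and in particular \emph{not} on the fixed shift, hence not on $r$, such that for every $\epsilon \ge 0$,
\[
  \Pr\big( s_{\min}(\mat M_n - r\mat I_n) \le \epsilon\, n^{-1/2} \big) \le C\,\epsilon^{c_1} + 2 e^{-n^c}.
\]
Specializing to $\epsilon = 0$ annihilates the first term and gives $\Pr(\mat M_n - r\mat I_n \text{ singular}) \le 2 e^{-n^c}$, which is exactly \eqref{eq:symbound}; since $c$ is independent of $r$, the bound is uniform over $r \in \mathbb R$, as required.

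What remains is bookkeeping rather than a genuine obstacle: one must check that the $x_{ij}$ meet the normalization and moment hypotheses of \cite[Theorem~1.2]{Vsym} (immediate from mean zero, unit variance, and $|x_{ij}| \le B$), and — more importantly — that the exponent in the $2e^{-n^c}$ term is uniform over the deterministic shift $-r\mat I_n$. If this uniformity is not transparent from the cited statement, it can be recovered by inspecting the proof: the shift enters only through small-ball (Littlewood--Offord-type) estimates for linear forms built from the rows of $\mat M_n$, of the shape $\langle X_i, v\rangle - r v_i$, which are insensitive to translation by $r v_i$, and through the compressible/incompressible vector dichotomy, for which a deterministic shift of operator norm at most $\|\mat M_n\| \le Bn$ is harmless; moreover when $|r| > \|\mat M_n\|$ the event ``$r$ is an eigenvalue'' is empty and \eqref{eq:symbound} is trivial. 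The only real task is therefore to locate (or extract) the shift-uniform form of the invertibility estimate; I do not expect any substantial additional difficulty.
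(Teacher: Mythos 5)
Your reduction is the right one and the only natural one: identify the event $\{r \text{ is an eigenvalue of } \mat M_n\}$ with $\{s_{\min}(\mat M_n - r \mat I_n) = 0\}$, and then specialize Vershynin's smallest-singular-value estimate for shifted symmetric ensembles to $\epsilon = 0$ so that only the $2e^{-n^c}$ term survives. The paper does not give a proof (the theorem is stated ``following from Theorem~1.2 in \cite{Vsym}''), so there is nothing written out to compare against, but this is plainly the intended derivation, and you correctly note that boundedness $|x_{ij}|\le B$ gives the subgaussian hypothesis with a $\psi_2$-norm depending only on $B$.

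The one place your argument is loose is the treatment of the deterministic shift. Vershynin's Theorem~1.2 does not allow an arbitrary shift; it requires the fixed matrix $D$ to satisfy a bound of the form $\|D\| \le K\sqrt n$ (with $K$ the subgaussian parameter). You invoke instead the crude bound $\|\mat M_n\| \le Bn$, which is much too large, and your remark that ``when $|r| > \|\mat M_n\|$ the event is empty'' only kills the regime $|r| > Bn$ outright, leaving the range $C\sqrt n < |r| \le Bn$ unaccounted for: there Vershynin's theorem does not apply directly, yet the event is not deterministically empty. The correct fix, which is the one the paper itself uses in the proof of Theorem~\ref{thm:realsym}, is to invoke the operator-norm concentration inequality \cite[Lemma~2.3]{Vsym}: $\Pr(\|\mat M_n\| > C\sqrt n) \le 2e^{-n}$ for a constant $C = C(B)$. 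Then for $|r| \le C\sqrt n$ you may apply Vershynin's Theorem~1.2 with shift $D = -r\mat I_n$ (its norm is $|r| \le C\sqrt n$, within the allowed range after adjusting constants), and for $|r| > C\sqrt n$ you have $\Pr(r\text{ is an eigenvalue}) \le \Pr(\|\mat M_n\| \ge |r|) \le 2e^{-n} \le 2e^{-n^c}$. With this two-case split the uniformity over $r$ is genuine rather than asserted. Everything else in your write-up is fine.
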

It is natural to only consider real numbers $r$ in \eqref{eq:symbound} since real symmetric matrices have all real eigenvalues.  
Also, the constant $c$ appearing in Theorem \ref{thm:Vsym} is typically less than one and may be much smaller.  

The more general version of the above result proven by Vershynin \cite[Theorem~1.2]{Vsym} applies to real symmetric matrices with entries having subgaussian tails (see \cite{Vintro} for why bounded implies subgaussian), and the bound we will prove on the probability of having low-degree algebraic numbers as eigenvalues (Theorem~\ref{thm:realsym} below) extends to this setting.

\begin{theorem}\label{thm:realsym}
Let $B>0$ be a real constant, let $c'>0$ be an absolute constant satisfying $c'< c/2$, where $c<1$ is the absolute constant from Theorem~\ref{thm:Vsym} (which depends only on $B$), and let $\mat M_n$ be an $n$ by $n$ real symmetric matrix whose entries on and above the diagonal are iid integer-valued random variables which are bounded in absolute value by $B$.  Then the probability that $\mat M_n$ has an eigenvalue that is algebraic of degree at most $n^{c'}$ is bounded above by
$e^{-n^c/2}$ for all sufficiently large $n$.
\end{theorem}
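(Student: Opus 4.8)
The plan is to apply Theorem~\ref{thm:main} to the characteristic polynomial $f(z):=\det(z\mat I_n-\mat M_n)$ of $\mat M_n$, feeding in Vershynin's pointwise delocalization bound (Theorem~\ref{thm:Vsym}) as the hypothesis \eqref{eq:zerobnd}. First I would observe that, since every entry of $\mat M_n$ is integer-valued, $f$ is a degree $n$ random monic polynomial with integer coefficients in the sense of Definition~\ref{def:monicpoly}, so Theorem~\ref{thm:main} is applicable, and a number $\alpha$ is an eigenvalue of $\mat M_n$ that is algebraic of degree $d$ precisely when $\alpha$ is a root of $f$ that is algebraic of degree $d$. Next, because $\mat M_n$ is real symmetric, all roots of $f$ are real, and the deterministic estimate $\|\mat M_n\|\le\|\mat M_n\|_{\mathrm{HS}}=\big(\sum_{i,j}x_{ij}^2\big)^{1/2}\le Bn$ shows that, with probability $1$, every root of $f$ lies in $[-Bn,Bn]$. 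I would therefore take $M:=Bn$ and $\Omega:=\{z\in\mathbb C:|z|\le M\}$, so that the error term $\Prob(|\lambda_i(f)|>M\text{ for some }i)$ appearing in \eqref{eq:sumprobbnd} is simply $0$.

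For the delocalization hypothesis, I would note that $\Prob(f(z)=0)=0$ for every non-real $z$ (the roots of $f$ are real), while for real $z$ we have $\Prob(f(z)=0)=\Prob(z\text{ is an eigenvalue of }\mat M_n)\le 2e^{-n^c}$ by Theorem~\ref{thm:Vsym}, where $c<1$ is the constant from that theorem. Hence $\sup_{z\in\Omega}\Prob(f(z)=0)\le p$ with $p:=2e^{-n^c}$. Setting $k:=\floor{n^{c'}}$, which satisfies $2\le k\le n$ once $n$ is large enough (as $0<c'<1$), Theorem~\ref{thm:main} gives that the probability $\mat M_n$ has an eigenvalue algebraic of degree at most $n^{c'}$---equivalently, of degree at most $k$, since algebraic degrees are positive integers---is at most $p(eM)^{k^2}=2e^{-n^c}(eBn)^{k^2}$.

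It then remains to bound $2e^{-n^c}(eBn)^{k^2}$ by $e^{-n^c/2}$ for large $n$; this is exactly the kind of estimate recorded in Lemma~\ref{lem:collected-bounds}(iii) (stated there with $M=C\sqrt n$, but the same elementary computation applies to any polynomially bounded $M$). Concretely, taking logarithms, $k^2\log(eBn)\le n^{2c'}(\log(eB)+\log n)$, and the hypothesis $c'<c/2$ gives $2c'<c$, so $n^{2c'}(\log(eB)+\log n)=o(n^c)$; thus for $n$ sufficiently large $k^2\log(eBn)+\log 2\le\frac{1}{2}n^c$, and substituting yields $p(eM)^{k^2}\le e^{-n^c/2}$, completing the argument. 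I do not anticipate a genuine obstacle in this proof: the two essential ingredients, Vershynin's bound and Theorem~\ref{thm:main}, are already available, and the only point demanding any care is to prevent the root-localization error from dominating---which is why I use the crude deterministic bound $M=Bn$, making that error vanish---together with the bookkeeping check $k^2\log M=o(n^c)$, for which the hypothesis $c'<c/2$ is tailored. (Alternatively, one could replace $M=Bn$ with $M=C\sqrt n$ for suitable $C$, using a standard concentration bound such as \cite[Proposition~2.4]{RVnonasy} that $\Prob(\|\mat M_n\|>C\sqrt n)$ is exponentially small in $n$, hence $o(e^{-n^c/2})$ since $c<1$, and then quote Lemma~\ref{lem:collected-bounds}(iii) verbatim.)
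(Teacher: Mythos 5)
Your proposal is correct and follows essentially the same approach as the paper: apply Theorem~\ref{thm:main} to the characteristic polynomial using Vershynin's pointwise delocalization (Theorem~\ref{thm:Vsym}) as the hypothesis, then verify the arithmetic bound. The only cosmetic difference is that you take the deterministic radius $M=Bn$ so the root-localization error vanishes, whereas the paper uses $M=C\sqrt n$ together with the exponential tail from \cite[Lemma~2.3]{Vsym} so it can invoke Lemma~\ref{lem:collected-bounds}(iii) verbatim---an alternative you already note at the end.
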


\begin{proof}[Proof of Theorem \ref{thm:realsym}]
Let $f$ be the characteristic polynomial of $\mat M_n$, so that the eigenvalues of $\mat M_n$ are the roots of $f$, and note that by \cite[Lemma~2.3]{Vsym}, all eigenvalues of $\mat M_n$ have absolute value at most $C\sqrt n$ with probability at least $1-2e^{-n}$ for some constant $C$ (depending only on $B$).

Let $\Omega:= \{r \in \mathbb R: \abs r \le C\sqrt n\}$.  Since $\mat M_n$ is a real symmetric matrix, the eigenvalues of $\mat M_n$ are all real.  
Moreover, Theorem~\ref{thm:Vsym} implies that $\sup_{r \in \Omega} \Pr(f(r)=0) \le 2e^{-n^c}$.  Thus, combining Theorem~\ref{thm:main}, Lemma~\ref{lem:collected-bounds}(iii), and \cite[Lemma~2.3]{Vsym}), we have that the probability that $f$ has an algebraic root of degree at most $n^{c'}$ is bounded above by $2\exp(-\pfrac 23 n^c) + 2e^{-n}$, which is at most $e^{-n^c/2}$ for all sufficiently large $n$.
%
\end{proof}

\subsection{Elliptical random matrices}

Elliptical random matrices interpolate between iid random matrices and random symmetric matrices.  In an elliptical random matrix, all the entries are independent with the exception that the $(i,j)$-entry may depend on the $(j,i)$-entry, and one also requires that the correlation between the $(i,j)$-entry and the $(j,i)$-entry is a constant $\rho$ for all $i\ne j$.  Thus, if the matrix has iid entries, then $\rho=0$, and if $\rho=1$, the matrix is symmetric.  There are results showing that the limiting distribution of the eigenvalues also interpolates between the limiting distributions for iid random matrices and for symmetric random matrices; in particular, for $-1 < \rho < 1$, the limiting eigenvalue distribution (suitably scaled) is an ellipse with eccentricity $\sqrt{1-\frac{(1-\rho)^2}{(1+\rho)^2}}$; see 
Nguyen and O'Rourke \cite{NO} and Naumov \cite{Nau}.

To apply Theroem~\ref{thm:main}, we will use a result due to Nguyen and O'Rourke \cite{NO} bounding the smallest singular value, and we will focus on the special case of $\pm 1$ elliptical random matrices for simplicity.  Let $\mat M_{n,\rho}$ be an elliptical random matrix with covariance parameter $-1<\rho<1$ with entries $x_{ij}$ defined as follows:
let $\{x_{i,j}: i \le j\} \cup \{\xi_{i,j}: i> j\}$ be a collection of independent random variables, where $\Pr(x_{i,j} = 1) = \Pr (x_{i,j} =-1)=1/2$ for $i\le j$ and where  
$$
\xi_{i,j} := 
\begin{cases}
1 & \mbox{ with probability }  (1+\rho)/2\\
-1 & \mbox{ with probability } (1-\rho)/2,
\end{cases}
$$
for $i > j$.  Then let $x_{i,j} := x_{j,i}\xi_{i,j}$ whenever $i > j$.  Define
$$ \mat M_{n, \rho} :=
\left(
\begin{matrix}
x_{1,1} & x_{1,2} & x_{1,3} &\dots  & x_{1,n} \\
x_{1,2}\xi_{2,1} & x_{2,2} & x_{2,3}& \dots & x_{2,n}\\
x_{1,3}\xi_{3,1} & x_{3,2}\xi_{3,2} & x_{3,3}& \dots& x_{3,n}\\
\vdots & \vdots & \ddots & \ddots &  \vdots \\
x_{1,n} \xi_{n,1} & x_{2,n} \xi_{n,2} & \dots & x_{n-1,n} \xi_{n,n-1}  & x_{n,n}
\end{matrix}
\right),
$$
and note that each entry takes the values $+1$ or $-1$ with equal probability.  We will call $\mat M_{n,\rho}$ a \emph{Rademacher elliptical random matrix with parameter $\rho$}.  

\begin{theorem}[Nguyen-O'Rourke, following from Theorem~1.9 in \cite{NO}]\label{thm:NO}
Let $\mat M_{n,\rho}$ be an $n$ by $n$ Rademacher elliptical random matrix with parameter $-1 < \rho < 1$, and let $B'>0$ be a constant.  Then, for all sufficiently large $n$ (depending only on $B'$ and $\rho$), we have that
$$\sup_{z\in \mathbb C,\  \abs z \le n} \Pr(z \mbox{ is an eigenvalue of }\mat M_{n,\rho}) \le n^{-B'}.
$$ 
\end{theorem}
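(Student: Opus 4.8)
The plan is to reduce the event ``$z$ is an eigenvalue of $\mat M_{n,\rho}$'' to a small-ball event for the least singular value of a deterministically shifted elliptical matrix, and then invoke the least singular value estimate of Nguyen and O'Rourke. I would begin with the elementary reduction: for every $z \in \mathbb{C}$, $z$ is an eigenvalue of $\mat M_{n,\rho}$ exactly when $\mat M_{n,\rho} - z\mat I_n$ is singular, in which case its least singular value $s_n(\mat M_{n,\rho} - z\mat I_n)$ is $0$. Hence, for any $\gamma > 0$,
\[
\bigl\{\, z \text{ is an eigenvalue of } \mat M_{n,\rho} \,\bigr\} \ \subseteq\ \bigl\{\, s_n\!\bigl(\mat M_{n,\rho} - z\mat I_n\bigr) \le n^{-\gamma} \,\bigr\},
\]
so it suffices to bound the probability on the right by $n^{-B'}$, uniformly over $z$ with $\abs{z} \le n$, for all large $n$ and a suitable $\gamma = \gamma(B',\rho) > 0$.

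Next I would verify that $\mat M_{n,\rho}$ is an instance of the elliptical ensemble to which \cite[Theorem~1.9]{NO} applies: its entries are $\pm 1$, hence mean zero, unit variance and uniformly bounded, and by construction the correlation of the $(i,j)$ and $(j,i)$ entries equals $\rho \in (-1,1)$ for $i \ne j$. Applying \cite[Theorem~1.9]{NO} with the deterministic shift $\mat F := -z\mat I_n$, which has $\norm{\mat F} = \abs z \le n$, then furnishes, for the prescribed $B'$, constants $\gamma = \gamma(B',\rho) > 0$ and $n_0 = n_0(B',\rho)$ such that
\[
\sup_{\abs z \le n}\ \Pr\bigl( s_n(\mat M_{n,\rho} - z\mat I_n) \le n^{-\gamma} \bigr) \ \le\ n^{-B'} \qquad \text{for all } n \ge n_0 .
\]
Combining this with the inclusion from the first step and taking the supremum over $\abs z \le n$ yields the theorem. (If \cite{NO} records its conclusion directly as a singularity probability rather than a least singular value bound, the argument only becomes shorter; the least singular value formulation is the robust one.)

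The step I expect to demand the most care is extracting \cite[Theorem~1.9]{NO} in exactly the form used above --- namely, that the small-ball estimate for $s_n(\mat M_{n,\rho} + \mat F)$ is \emph{uniform} over all deterministic $\mat F$ of spectral norm at most $n$ (a fixed power of $n$ would suffice; the first power is all that is needed), and that it permits complex $\mat F$, equivalently complex $z$, which is essential since elliptical matrices with $\rho < 1$ have non-real eigenvalues. If the statement in \cite{NO} covers only shifts of norm $O(\sqrt n)$, one would supplement it by treating the range $\abs z > C\sqrt n$ separately: there $z$ can be an eigenvalue only if $\norm{\mat M_{n,\rho}} > C\sqrt n$, which by a standard operator-norm tail bound for bounded-entry matrices (cf.\ the estimate used in the proof of Theorem~\ref{thm:iidpm1}) has probability $e^{-cn}$ for some constant $c>0$, hence at most $n^{-B'}$ for large $n$. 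Finally, I would emphasize that no covering or net argument over $z$ is required here, because the desired conclusion is a pointwise-in-$z$ probability bound and the supremum in the statement is a supremum of probabilities, not the probability of a supremum.
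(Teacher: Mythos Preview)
Your proposal is correct and follows exactly the route the paper indicates: the paper does not supply its own proof of this statement but simply records it as ``following from Theorem~1.9 in \cite{NO}'', and your derivation---reducing $\{z \text{ is an eigenvalue}\}$ to $\{s_n(\mat M_{n,\rho}-z\mat I_n)=0\}$ and then invoking the least singular value bound of \cite{NO} with the deterministic shift $\mat F=-z\mat I_n$---is precisely how that reduction goes. Your caveats about uniformity in $\mat F$ with $\|\mat F\|\le n$ and about complex shifts are the right things to check against the hypotheses of \cite[Theorem~1.9]{NO}, and your contingency for the range $|z|>C\sqrt n$ via an operator-norm tail bound is a sound backup, though in fact not needed here.
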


We can now apply Theorem~\ref{thm:main} to get the following result.

\begin{theorem} \label{thm:elliptical}
Let $\mat M_{n,\rho}$ be an $n$ by $n$ Rademacher elliptical random matrix with parameter $-1 < \rho < 1$, and let $B >0$ and $K \geq 1$ be constants.  Then, for all sufficiently large $n$ (depending only on $B$, $\rho$, and $K$), the probability that the matrix $\mat M_{n,\rho}$ has an eigenvalue that is algebraic of degree at most $K$ is bounded above by $n^{-B}$.
\end{theorem}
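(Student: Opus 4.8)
The plan is to follow the template used for Theorems~\ref{thm:iidpm1} and~\ref{thm:realsym}: feed the pointwise delocalization estimate of Theorem~\ref{thm:NO} into Theorem~\ref{thm:main}, using Lemma~\ref{lem:collected-bounds}(iv) to keep the polynomial blow-up factor $(eM)^{k^2}$ under control.

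First I would let $f(z) := \det(z\mat I_n - \mat M_{n,\rho})$ be the characteristic polynomial of $\mat M_{n,\rho}$, so that the roots $\lambda_1(f),\dots,\lambda_n(f)$ of $f$ are exactly the eigenvalues of $\mat M_{n,\rho}$. Since every entry of $\mat M_{n,\rho}$ equals $\pm1$, an elementary bound (for instance Gershgorin's circle theorem, since each row has a diagonal entry of modulus $1$ and off-diagonal absolute row sum $n-1$) gives $\abs{\lambda_i(f)} \le n$ for all $i$ with probability $1$. Taking $M := n$ and $\Omega := \{z \in \mathbb{C} : \abs z \le n\}$, it follows that $\Prob(\abs{\lambda_i(f)} > M \text{ for some } i) = 0$, and that any eigenvalue of $\mat M_{n,\rho}$ that is algebraic of degree at most $K$ is automatically an algebraic root of $f$ of degree at most $K$ lying in $\Omega$; so it suffices to bound the probability of the latter event.

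Next I would fix the delocalization parameter. Applying Lemma~\ref{lem:collected-bounds}(iv) with $m = 1$ and $k = K$ produces a constant $B' > 0$, depending only on $B$ and $K$, such that $p(eM)^{K^2} \le n^{-B}$ for all sufficiently large $n$ whenever $M = n$ and $p = n^{-B'}$. With this $B'$ fixed, Theorem~\ref{thm:NO} gives, for all $n$ sufficiently large (depending only on $B'$ and $\rho$, hence only on $B$, $K$, and $\rho$),
\[
	\sup_{z \in \Omega} \Prob(f(z) = 0) \;=\; \sup_{z \in \Omega} \Prob(z \text{ is an eigenvalue of } \mat M_{n,\rho}) \;\le\; n^{-B'} \;=:\; p,
\]
so hypothesis \eqref{eq:zerobnd} of Theorem~\ref{thm:main} holds on $\Omega$ with this $p$. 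Applying Theorem~\ref{thm:main} with this $M$, $\Omega$, $p$, and $k = K$ (the main clause when $K \ge 2$, and the final-sentence $k=1$ clause, with $p(3M)$ in place of $p(eM)^{k^2}$, when $K=1$) then bounds the probability that $f$ has an algebraic root of degree at most $K$ in $\Omega$ by $p(eM)^{K^2} + 0 \le n^{-B}$. Combined with the reduction above, this is the claimed bound.

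I do not expect a genuine obstacle here: the statement is a direct application of the already-established Theorem~\ref{thm:main} and Lemma~\ref{lem:collected-bounds}(iv), essentially identical in structure to the proof of Theorem~\ref{thm:iidpm1}. The only point requiring care is the bookkeeping of the constant $B'$ — it must depend only on $B$ and $K$ and be chosen large enough that the factor $(eM)^{K^2} = (en)^{K^2}$ (polynomial in $n$, since $K$ is constant) is absorbed while the final bound stays $n^{-B}$ — together with the trivial modification in the $K=1$ case, where one notes $p(3M) = 3n^{1-B'} \le n^{-B}$ for large $n$ once $B' > B+1$ (which one may arrange by invoking Lemma~\ref{lem:collected-bounds}(iv) with target exponent $B+1$ rather than $B$).
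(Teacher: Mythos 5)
Your proposal is correct and follows the paper's own proof almost verbatim: characteristic polynomial, trivial bound $\abs{\lambda_i}\le n$, $\Omega$ the disk of radius $n$, Theorem~\ref{thm:NO} to supply $p = n^{-B'}$, and Theorem~\ref{thm:main} together with Lemma~\ref{lem:collected-bounds}(iv). Your extra care in spelling out the $K=1$ adjustment (using $p(3M)$ and choosing $B'$ slightly larger) is a reasonable clarification but not a departure from the paper's argument.
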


\begin{proof}
Let $f$ be the characteristic polynomial of $\mat M_{n,\rho}$. 
All eigenvalues of $\mat M_{n,\rho}$ have absolute value at most $n$ with probability $1$ by an elementary bound.
Let $\Omega:= \{z \in \mathbb C: \abs z \le n\}$, and note that Theorem~\ref{thm:NO} lets us take $p=n^{-B'}$ for any constant $B'>0$ in \eqref{eq:zerobnd}.  Thus, we may apply Theorem~\ref{thm:main} and Lemma~\ref{lem:collected-bounds}(iv) (with $M=n$ and $m=1$) to complete the proof.
%
\end{proof}

\subsection{Product matrices}

We now show how Theorem~\ref{thm:main} can be applied to products of independent random matrices.  We begin with the following result from \cite{ORSV}.  

\begin{theorem}[O'Rourke-Renfrew-Shoshnikov-Vu, \cite{ORSV} Theorem~5.2 ] \label{thm:ORSV}
Let $m \geq 1$ and $B', \gamma > 0$ be constants.  Let $\mat M_n^{(1)}, \ldots, \mat M_n^{(m)}$ be independent $n$ by $n$ matrices in which each entry takes the value $+1$ or $-1$ independently with probability $1/2$.  Define the product
$$ \mat M_n := \mat M_n^{(1)} \cdots \mat M_n^{(m)}. $$
Then, for all sufficiently large $n$ (depending only on $m$, $B'$, and $\gamma$), we have
$$ \sup_{z \in \mathbb{C}, \  |z| \leq n^{\gamma}} \Prob \left( z \mbox{ is an eigenvalue of } \mat M_n \right) \leq n^{-B'}. $$
\end{theorem}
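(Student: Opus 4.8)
We prove Theorem~\ref{thm:ORSV} by conditioning on all but one of the factors and reducing to the singularity of a single random $\pm 1$ matrix perturbed by a deterministic matrix. Fix $z \in \mathbb C$ with $|z| \le n^\gamma$, and write $\mat B := \mat M_n^{(2)} \cdots \mat M_n^{(m)}$, so that $\mat M_n = \mat M_n^{(1)} \mat B$ with $\mat M_n^{(1)}$ independent of $\mat B$. On the event that $\mat B$ is invertible we have the factorization $\mat M_n - z\mat I_n = \left(\mat M_n^{(1)} - z\mat B^{-1}\right) \mat B$, so $z$ is an eigenvalue of $\mat M_n$ if and only if $\mat M_n^{(1)} - z\mat B^{-1}$ is singular. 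After conditioning on $\mat M_n^{(2)}, \ldots, \mat M_n^{(m)}$, the matrix $z\mat B^{-1}$ is deterministic while $\mat M_n^{(1)}$ remains a genuine random $\pm 1$ matrix, so this is precisely a shifted-singularity event for a single random matrix.

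The first ingredient is to keep the operator norm of the deterministic shift $z\mat B^{-1}$ under polynomial control. Writing $s_n(\cdot)$ for the smallest singular value, one has $s_n(\mat B) \ge \prod_{j=2}^m s_n(\mat M_n^{(j)})$; and by a Rudelson--Vershynin type estimate for the smallest singular value of a square random $\pm 1$ matrix (see \cite{RVnonasy}), $\Pr\left( s_n(\mat M_n^{(j)}) \le n^{-A} \right) = O(n^{-A + 1/2})$ for any fixed constant $A > 0$. A union bound then shows the event $\mathcal E := \{ s_n(\mat B) \ge n^{-A(m-1)} \}$ has probability $1 - O(n^{-A+1/2})$, and on $\mathcal E$ we have $\| z\mat B^{-1} \| \le n^{\gamma + A(m-1)}$, which is polynomially bounded in $n$ with exponent depending only on $\gamma$, $A$, and $m$. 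In particular $\mathcal E$ forces $\mat B$ to be invertible, so the factorization above is available there.

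The second ingredient is the known bound that a random $\pm 1$ matrix perturbed by an arbitrary deterministic matrix of polynomially bounded operator norm is singular with probability at most $n^{-A'}$ for any prescribed $A'$, uniformly over such perturbations; this follows from the inverse Littlewood--Offord machinery underlying the singularity bound quoted as Theorem~\ref{BVWcor3.3} (and \cite{RVnonasy, BVW, ORSV}). Combining the two ingredients via
$$ \Pr(z \text{ is an eigenvalue of } \mat M_n) \le \Pr(\mathcal E^c) + \E\left[ \mathbf{1}_{\mathcal E} \, \Pr\left( \mat M_n^{(1)} - z\mat B^{-1} \text{ singular} \mid \mat M_n^{(2)}, \ldots, \mat M_n^{(m)} \right) \right], $$
and choosing $A$ (hence the admissible norm exponent $D = \gamma + A(m-1)$ and the target exponent $A' = B' + 1$) large enough in terms of $m$, $B'$, $\gamma$, makes both terms at most $\tfrac12 n^{-B'}$ for all large $n$. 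Since nothing in this chain depends on $z$ except through the bound $|z| \le n^\gamma$, the estimate is uniform over the disk, which is the claim.

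The main obstacle is the second ingredient: the shift $z\mat B^{-1}$ can genuinely have operator norm as large as a power of $n$ (because $\mat B$, a product of $m-1$ random $\pm 1$ matrices, may have a small smallest singular value), so one needs a singularity bound for random $\pm 1$ matrices that is uniform over all deterministic shifts of polynomially bounded norm rather than just bounded shifts. This is exactly why one conditions on $\mathcal E$ to keep $\|z\mat B^{-1}\|$ polynomially bounded and why the hypothesis $|z| \le n^\gamma$ appears; with that in hand, the remaining steps — the product inequality $s_n(\mat B) \ge \prod_j s_n(\mat M_n^{(j)})$, the union bounds, and removing the conditioning — are routine.
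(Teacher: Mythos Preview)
The paper does not prove Theorem~\ref{thm:ORSV}; it is quoted verbatim from \cite[Theorem~5.2]{ORSV} and used as a black box to derive Theorem~\ref{thm:prod}. So there is no in-paper proof to compare against.

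Your sketch is essentially the standard route to such a product bound and is broadly sound: the factorization $\mat M_n - z\mat I_n = (\mat M_n^{(1)} - z\mat B^{-1})\mat B$ on $\{\mat B \text{ invertible}\}$ is correct, the product inequality $s_n(\mat B)\ge \prod_{j\ge 2} s_n(\mat M_n^{(j)})$ is correct, and the Rudelson--Vershynin tail $\Pr(s_n(\mat M_n^{(j)})\le n^{-A}) = O(n^{-A+1/2})$ is correctly invoked to control $\|z\mat B^{-1}\|$ polynomially on a high-probability event.

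The one place to tighten is your ``second ingredient.'' Theorem~\ref{BVWcor3.3} as stated in this paper does \emph{not} supply it: that result requires all entries of the perturbed matrix to lie in a fixed finite set $S$ with $|S|=O(1)$, whereas after shifting by $z\mat B^{-1}$ the entries of $\mat M_n^{(1)} - z\mat B^{-1}$ take values in an $n$-dependent, potentially infinite set. What you actually need is a least-singular-value bound of Tao--Vu type, namely that for any $A'>0$ there is $B''>0$ with $\Pr\bigl(s_n(\mat X + \mat F)\le n^{-B''}\bigr)\le n^{-A'}$ uniformly over deterministic $\mat F$ with $\|\mat F\|\le n^{D}$, where $\mat X$ has iid $\pm 1$ entries. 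That statement is available (e.g.\ in \cite{TVsparse} and its descendants, and is exactly the form used in \cite{ORSV}), but you should cite it rather than Theorem~\ref{BVWcor3.3}. With that substitution the argument goes through; citing \cite{ORSV} itself for this step is of course circular.
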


We can now apply Theorem~\ref{thm:main} to get the following result.

\begin{theorem} \label{thm:prod}
Let $K, m \geq 1$ and $B > 0$ be constants.  Let $\mat M_n^{(1)}, \ldots, \mat M_n^{(m)}$ be independent $n$ by $n$ matrices in which each entry takes the value $+1$ or $-1$ independently with probability $1/2$.  Then, for all sufficiently large $n$ (depending only on $m$, $K$, and $B$), the probability that the matrix 
$$ \mat M_n := \mat M_n^{(1)} \cdots \mat M_n^{(m)} $$
has an eigenvalue that is algebraic of degree at most $K$ is bounded above by $n^{-B}$.  
\end{theorem}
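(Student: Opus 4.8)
The plan is to mirror the proof of Theorem~\ref{thm:elliptical}, assembling the elementary spectral bound, the delocalization estimate of Theorem~\ref{thm:ORSV}, the abstract Theorem~\ref{thm:main}, and Lemma~\ref{lem:collected-bounds}(iv). First I would let $f(z) := \det(z\mat I_n - \mat M_n)$ be the characteristic polynomial of $\mat M_n$, so that the eigenvalues of $\mat M_n$ are exactly the roots of $f$. Since each factor $\mat M_n^{(j)}$ has all entries in $\{+1,-1\}$, its operator norm is at most its Frobenius norm, which equals $n$; by submultiplicativity of the operator norm, $\|\mat M_n\| \le n^m$, and hence every eigenvalue of $\mat M_n$ has absolute value at most $n^m$ with probability $1$. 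Consequently, taking $M := n^m$, the term $\Prob(|\lambda_i(f)| > M \text{ for some } i)$ appearing in \eqref{eq:sumprobbnd} is zero.

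Next I would set $\Omega := \{z \in \mathbb{C} : |z| \le n^m\} \subseteq \{z : |z| \le M\}$ and invoke Theorem~\ref{thm:ORSV} with $\gamma = m$ and with a constant $B' > 0$ to be specified momentarily; this shows that, for all sufficiently large $n$, $\sup_{z \in \Omega} \Prob(f(z) = 0) \le n^{-B'}$, i.e. the pointwise delocalization hypothesis \eqref{eq:zerobnd} of Theorem~\ref{thm:main} holds with $p = n^{-B'}$ on $\Omega$. Applying Theorem~\ref{thm:main} with this $\Omega$, with $M = n^m$, and with $k = K$ (which satisfies $2 \le k \le n$ for large $n$ whenever $K \ge 2$; the case $K = 1$ is covered by the final sentence of Theorem~\ref{thm:main}), the probability that $\mat M_n$ has an eigenvalue that is algebraic of degree at most $K$ is at most $p(eM)^{K^2} = n^{-B'}(en^m)^{K^2}$.

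Finally, one quotes Lemma~\ref{lem:collected-bounds}(iv): given the target exponent $B$ together with $m$ and $k = K$, it produces a constant $B'$ (depending only on $B$, $m$, $K$) for which $p(eM)^{k^2} \le n^{-B}$ for all sufficiently large $n$; choosing this $B'$ in the previous step completes the argument, the applicable threshold on $n$ being the larger of the two produced along the way. (In the borderline case $K = 1$ one instead uses the $p(3M)$ form of Theorem~\ref{thm:main} together with the trivial estimate $n^{-B'}\cdot 3n^m \le n^{-B}$, valid as soon as $B' \ge B + m + 1$.) There is no genuine analytic obstacle here — the theorem is a direct specialization of the general machinery — so the only point requiring care is the order of quantifiers: the constant $B'$ must be extracted from Lemma~\ref{lem:collected-bounds}(iv) \emph{first}, since it depends only on $B$, $m$, and $K$, and only then fed into Theorem~\ref{thm:ORSV}, after which the "sufficiently large $n$" threshold is determined.
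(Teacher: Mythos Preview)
Your proposal is correct and follows essentially the same route as the paper's proof: characteristic polynomial, elementary spectral-norm bound giving $M=n^m$, Theorem~\ref{thm:ORSV} for pointwise delocalization with $p=n^{-B'}$, then Theorem~\ref{thm:main} together with Lemma~\ref{lem:collected-bounds}(iv). You supply a bit more detail than the paper does (the Frobenius/submultiplicativity justification, the explicit $K=1$ case, and the remark on the order of quantifiers for $B'$), but the argument is the same.
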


\begin{proof}
Let $f$ be the characteristic polynomial of $\mat M_{n}$, 
and note that all eigenvalues of $\mat M_{n}$ have absolute value at most $n^m$ with probability 1 by an elementary bound.
Let $\Omega:= \{z \in \mathbb C: \abs z \le n^m\}$, and note that by Theorem~\ref{thm:ORSV} we can take $p=n^{-B'}$ for any constant $B'>0$ in \eqref{eq:zerobnd}. Thus, we may apply Theorem~\ref{thm:main} and Lemma~\ref{lem:collected-bounds}(iv) (with $M=n^m$) to complete the proof.
%
\end{proof}

More generally, Theorem \ref{thm:ORSV} can be extended to products of elliptical random matrices which satisfy a number of constraints (see \cite[Theorem 5.2]{ORSV} for details).  This leads naturally to a version of Theorem \ref{thm:prod} for the product of $m$ independent Rademacher elliptical random matrices with parameters $\rho_1, \ldots, \rho_m$ satisfying $-1 < \rho_i < 1$.  


\subsection{Erd\H os--R\'enyi random graphs}\label{ss:simplegraphs}
We now consider Erd\H os--R\'enyi random graphs on $n$ vertices, where each edge is present independently at random with a constant probability $p$ satisfying $0<p<1$.  We denote such a graph by $G(n,p)$ and observe that the graph can be defined by its adjacency matrix $\mat A_n$, which is a real symmetric matrix with entry $(i,j)$ equal to 1 if there is an edge between vertices $i$ and $j$, and entry equal to zero otherwise.

In the Erd\H{o}s--R\'enyi model, the independence among edges means that all entries in the strict upper triangle of $\mat A_n$ are also independent.  Thus, the following result due to Nguyen \cite{Nsym} is applicable.  

\begin{theorem}[Nguyen, following from Theorem 1.4 in \cite{Nsym}] \label{thm:Nsym}
Let $0 < p < 1$ and $B' > 0$ be constants, and let $\mat A_n$ be the adjacency matrix of $G(n,p)$.  Then, for $n$ sufficiently large (depending only on $p$ and $B'$), 
$$ \sup_{z \in \mathbb{C}, \ |z| \leq n} \Prob( z \mbox{ is an eigenvalue of } \mat A_n ) \leq n^{-B'}. $$
\end{theorem}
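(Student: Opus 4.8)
The plan is to derive the stated pointwise delocalization bound from Nguyen's quantitative invertibility estimate for random symmetric matrices (Theorem~1.4 in \cite{Nsym}) after two elementary reductions; the only place any real work is hidden is a uniformity point, discussed at the end.

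\emph{Reduction to real shifts.} Since $\mat A_n$ is a real symmetric matrix, all of its eigenvalues are real, so for every $z \in \mathbb{C}$ with $\Im z \neq 0$ we have $\Prob(z \text{ is an eigenvalue of } \mat A_n) = 0$. Because the quantity in the statement is a supremum of individual probabilities, it is bounded above by any common upper bound for them; hence it suffices to show $\Prob(r \text{ is an eigenvalue of } \mat A_n) \le n^{-B'}$ uniformly over all real $r$ with $|r| \le n$, for $n$ large depending only on $p$ and $B'$.

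\emph{Reduction to least singular values.} Fix a real $r$ with $|r| \le n$. Then $r$ is an eigenvalue of $\mat A_n$ exactly when $\mat A_n - r\mat I_n$ is singular, i.e.\ when $\sigma_{\min}(\mat A_n - r\mat I_n) = 0$, so for any exponent $C > 0$,
\[
  \Prob(r \text{ is an eigenvalue of } \mat A_n) \;\le\; \Prob\!\left( \sigma_{\min}(\mat A_n - r\mat I_n) \le n^{-C} \right).
\]
Now $\mat A_n - r\mat I_n$ is a random symmetric matrix whose off-diagonal entries are iid Bernoulli$(p)$ with $0 < p < 1$ (hence non-degenerate and bounded, so all relevant moment hypotheses are met), modified by the fixed symmetric diagonal perturbation $-r\mat I_n$, whose operator norm $|r|$ is at most $n$. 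Applying the form of Theorem~1.4 of \cite{Nsym} that allows a deterministic symmetric shift, for the given $B'$ there is an exponent $C = C(p,B')$ such that the right-hand side above is at most $n^{-B'}$ once $n$ is large enough in terms of $p$ and $B'$ only; feeding this back into the first reduction and taking the supremum over $z$ with $|z| \le n$ proves the theorem.

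\emph{The main obstacle.} Everything above is routine except for verifying that Nguyen's estimate holds with constants \emph{uniform in the deterministic shift} $-r\mat I_n$ as $r$ ranges over $[-n,n]$ (equivalently, that the threshold $C$ and the lower bound on $n$ depend only on $p$ and $B'$, not on $r$). This is typically how the cited least-singular-value result is packaged; if not, one re-runs the inverse-Littlewood--Offord and net argument of \cite{Nsym} with a fixed symmetric perturbation of polynomially bounded operator norm carried throughout, observing that the arithmetic-structure dichotomy controlling the bound is insensitive to the addition of a deterministic matrix and that the restriction $|r| \le n$ keeps all operator-norm quantities polynomially controlled. (If Theorem~1.4 of \cite{Nsym} is already stated in the $\sup_z\Prob(\cdot)$ form, the deduction is immediate and no reduction is needed.)
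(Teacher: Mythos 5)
Your proposal is correct and is essentially the deduction the paper intends by writing ``following from Theorem 1.4 in \cite{Nsym}''; the paper does not spell this out, treating it as an immediate corollary. Your two reductions (eigenvalues of a real symmetric matrix are real, so only real shifts matter; being an eigenvalue means the shifted matrix is singular, which is controlled by the least-singular-value bound) are exactly the standard steps, and you correctly flag the one substantive point — that Nguyen's Theorem 1.4 holds uniformly over deterministic symmetric perturbations of polynomially bounded operator norm, which is how that result is in fact stated and proved, with the constants depending only on the entry distribution (here on $p$) and the desired polynomial decay rate, not on the shift. Since $|r| \le n$ keeps $\|r\mat I_n\|$ polynomially bounded, the hypotheses are satisfied and the deduction goes through.
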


By following the proof of Theorem \ref{thm:elliptical} and applying Theorem \ref{thm:Nsym} in place of Theorem~\ref{thm:NO}, we find that for any $K \geq 1$ and $B > 0$, the probability that $\mat A_n$ has an eigenvalue that is algebraic of degree at most $K$ is bounded above $n^{-B}$ for $n$ sufficiently large (depending only on $K, B$ and $p$).  We state this result explicitly in Section \ref{sec:control} (see Theorem~\ref{thm:ER}).  The result is also true when the diagonal entries of $\mat A_n$ are allowed to be one (this corresponds to the case where loops are allowed in the graph).  

\subsection{Directed random graphs}
In the case of directed random graphs where directed edges (including loops) are included independently at random with probability $p$, where $0<p<1$ is a constant, the adjacency matrix $\mat M_n$ is an $n$ by $n$ matrix with entries independently equal to $1$ with probability $p$, and otherwise the entries are zero.   In this case, Theorem~\ref{BVWcor3.3} applies with $q:=\max\{ p, 1-p\}$, and thus, following the proof of Theorem~\ref{thm:iidpm1}, proves that for any $\epsilon >0$, the probability that $\mat M_n$ has an eigenvalue that is an algebraic number with degree at most $n^{1/2-\epsilon}$ is bounded above by $\left(\sqrt{q} + o(1)\right)^n$.

\subsection{Directed random graphs with fixed outdegrees}

Let $s$ be a positive integer, and let $x \in \{0,1\}^n$ be a random binary vector uniformly chosen from among all binary vectors containing exactly $s$ ones.  If $\mat M_n$ is the $n \times n$ matrix whose rows are iid copies of the vector $x$, then $\mat M_n$ can be viewed as the adjacency matrix of a random directed graph on $n$ vertices (where loops are allowed) such that each vertex has outdegree $s$.  In this case, $\mat M_n$ always has $s$ as an eigenvalue (with the corresponding eigenvector being the all-ones vector), and hence not every eigenvalue of $\mat M_n$ can be of high algebraic degree.  Using Theorem~\ref{thm:main}, we show that, besides this trivial eigenvalue, the other eigenvalues cannot be low-degree algebraic numbers.  

\begin{theorem} \label{thm:outdegree}
Let $0 < \eps \leq 1$, $K \geq 1$, and $B > 0$ be a constants, and let $x \in \{0,1\}^n$ be a random binary vector uniformly chosen from among all binary vectors containing exactly $s$ ones for some $s$ satisfying $|s - n/2| \leq (1-\eps)n/2$.  If $\mat M_n$ is a random $n$ by $n$ matrix whose rows are iid copies of the vector $x$, then, for all sufficiently large $n$ (depending only on $\eps$, $K$, and $B$), the probability that one of the non-trivial eigenvalues of the matrix $\mat M_n$ is algebraic of degree at most $K$ is bounded above by $n^{-B}$.  
\end{theorem}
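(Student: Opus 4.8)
The plan is to reuse the template of the earlier applications (cf.\ Theorems~\ref{thm:elliptical} and~\ref{thm:prod}), with one preliminary step to quotient out the deterministic eigenvalue $s$: applying Theorem~\ref{thm:main} directly to the characteristic polynomial of $\mat M_n$ would be useless, since $s$ is always an eigenvalue and $s\in\mathbb Z$ is algebraic of degree one.

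\emph{Reduction.} Let $f(z):=\det(z\mat I_n-\mat M_n)\in\mathbb Z[z]$, a degree-$n$ random monic polynomial with integer coefficients. Every row of $\mat M_n$ has exactly $s$ ones, so $\mat M_n\mathbf 1=s\mathbf 1$, hence $(z-s)\mid f(z)$; writing $f(z)=(z-s)g(z)$ and using that $z-s$ is monic, $g\in\mathbb Z[z]$ is again a random monic polynomial with integer coefficients (now of degree $n-1$). In the orthonormal basis extending $\mathbf 1/\sqrt n$, the identity $\mat M_n\mathbf 1=s\mathbf 1$ makes $\mat M_n$ block upper-triangular with diagonal blocks $s$ and $D$, where $D$ is the compression of $\mat M_n$ to $\mathbf 1^\perp$; thus $g$ is the characteristic polynomial of $D$, and the non-trivial eigenvalues of $\mat M_n$ in the statement are exactly the roots of $g$. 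By hypothesis $s\le n-\eps n/2<n$, so the elementary bound that every eigenvalue of $\mat M_n$ has modulus at most $\max_i\sum_j|(\mat M_n)_{ij}|=s$ puts every root of $g$ inside $\Omega:=\{z\in\mathbb C:|z|\le n\}$ with probability one, and the second term of \eqref{eq:sumprobbnd} vanishes. Consequently it suffices to prove pointwise delocalization of $g$ on $\Omega$: for every constant $B'>0$, $\sup_{z\in\Omega}\Pr(g(z)=0)\le n^{-B'}$ for all large $n$ (depending on $B'$ and $\eps$). Granting this, the theorem follows by applying Theorem~\ref{thm:main} to $g$ and invoking Lemma~\ref{lem:collected-bounds}(iv) with $M=n$, $m=1$, $k=K$ (which supplies a value $B'$ depending only on $B$ and $K$ for which the delocalization bound must be verified).

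\emph{Delocalization.} I would split according to whether $z=s$. For $z\ne s$ we have $g(z)=0$ iff $f(z)=0$, i.e.\ $\Pr(g(z)=0)=\Pr(\mat M_n-z\mat I_n\text{ is singular})$. For $z=s$, $g(s)=0$ iff $s$ is a root of $f$ of multiplicity at least two, equivalently iff $s$ is an eigenvalue of the compression $D$; unravelling, this forces $\mat M_n v=sv+\mu\mathbf 1$ for some $\mu\in\mathbb R$ and some nonzero $v\perp\mathbf 1$. In both cases one runs the standard row-by-row singularity argument for a matrix with i.i.d.\ rows: for a fixed candidate vector $v$ the relevant event factorizes as $\prod_i\Pr(\langle x_i,v\rangle=t_i)$ with $x_i$ a uniform weight-$s$ vector, and the engine is an anti-concentration (Littlewood--Offord) bound on the slice $\binom{[n]}{s}$. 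Here the hypothesis $|s-n/2|\le(1-\eps)n/2$ is used: it gives $\min(s,n-s)\ge\eps n/2$, so the concentration function of a linear form in $x_i$ is $O(1/\sqrt{\min(s,n-s)})=O(1/\sqrt n)$ whenever $v$ is ``unstructured'' (incompressible, with many well-separated coordinates), and it also drives the structured cases. One then handles the compressible/incompressible dichotomy in the usual way: incompressible $v$ contribute $\le(C/\sqrt n)^{n}$ times the size of a $\delta$-net of the sphere, still exponentially small; structured $v$ are counted directly---for instance a two-valued $v$ with value-classes of sizes $n_a$ and $n-n_a$ (necessarily $s\le n_a\le n-s$) forces every row's support to land in a single class, an event of probability at most $\big(\binom{n_a}{s}/\binom{n}{s}\big)^{n_a}\le(1-\eps/2)^{\eps n s/2}$, which is super-exponentially small and survives the union bound over all $\le 2^n$ such $v$; vectors with a bounded number of distinct values are handled the same way with more bookkeeping. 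Uniformity over $z\in\Omega$ comes from the $1$-Lipschitz dependence of $\sigma_{\min}(\mat M_n-z\mat I_n)$ on $z$, so a lower bound on a polynomially fine net of $\Omega$ upgrades to all of $\Omega$; the $z=s$ estimate is obtained by the same argument applied to the equation $\mat M_n v=sv+\mu\mathbf 1$ (with $\mu$ also discretized).

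\emph{Main obstacle.} The delocalization step is the crux, and within it the genuinely new point is pushing the singularity machinery through a matrix whose rows are i.i.d.\ but whose \emph{entries within a row are dependent} (the fixed-weight constraint): this needs a Littlewood--Offord inequality on the slice $\binom{[n]}{s}$ valid throughout $\eps n/2\le s\le n-\eps n/2$, together with the structured-vector bookkeeping above. The reduction to $g$ via the compression $D$, the $z$-net argument, and the final bookkeeping with Theorem~\ref{thm:main} and Lemma~\ref{lem:collected-bounds}(iv) are routine and mirror the proofs of Theorems~\ref{thm:elliptical} and~\ref{thm:prod}.
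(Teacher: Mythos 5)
Your reduction is sound and parallels the paper's: factoring out the trivial eigenvalue $s$ via the compression $D$ of $\mat M_n$ to $\mathbf 1^\perp$, so that $f(z)=(z-s)g(z)$ with $g$ the (monic, integer) characteristic polynomial of $D$, and then applying Theorem~\ref{thm:main} to $g$, is equivalent in effect to the paper's determinant identity $\det(\mat M_n - z\mat I) = (s-z)\det(\mat M'')$ (obtained by adding all columns to the last one and factoring $s-z$ out of that column). Your observation that $|z|\le s\le n$ kills the second term of \eqref{eq:sumprobbnd} is also fine.

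The crux, which you correctly flag, is the pointwise delocalization, and there the proposal has a real gap. You sketch a from-scratch Littlewood--Offord argument on the slice $\binom{[n]}{s}$ with a compressible/incompressible dichotomy, but none of this is carried out, and some of the structured-vector bookkeeping is not right as stated: a two-valued $v=a\mathbf 1_A + b\mathbf 1_B$ in the kernel of $\mat M_n - z\mat I$ does not force each row's support into a single class; it pins the per-row hypergeometric intersection counts $|x_i\cap A|$ to one of two fixed values depending on whether $i\in A$ or $i\in B$, and the resulting probability bound has to be computed accordingly. The paper avoids all of this by observing that the needed anti-concentration is already in the literature: starting from $\det(\mat M'')$ it subtracts the last row from the others to get $\det(\mat M'')=\det(\mat M_{n-1}-\mat Q_{n-1}-z\mat I_{n-1})$, where $\mat M_{n-1}$ is the leading $(n-1)\times(n-1)$ submatrix (still with i.i.d.\ fixed-weight rows) and $\mat Q_{n-1}$ is a rank-one matrix built from the last row of $\mat M_n$, independent of $\mat M_{n-1}$. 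Conditioning on $\mat Q_{n-1}$ and rescaling to the $\pm1$ matrix $\mat X_{n-1}:=2\mat M_{n-1}-\mat J_{n-1}$ (which has i.i.d.\ rows with fixed row sum $2s-n$, $|2s-n|\le(1-\eps)n$) puts the problem squarely into the setting of Nguyen--Vu \cite[Theorem~2.8]{NV}, which supplies the $n^{-B'}$ bound uniformly in $z$ with $|z|\le 2n$ and $\|\mat F\|\le n^2$. So the idea is correct, but you have left the hard step as a program to reprove a theorem that already exists; the paper's contribution for this model is precisely the concrete algebraic reduction that makes \cite[Theorem~2.8]{NV} applicable (and it also records, as \eqref{eq:odbnd2}, the same $n^{-B}$ bound on $\Pr(s\text{ has multiplicity}\ge 2)$, which is the $z=s$ subcase you treat separately).
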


\begin{proof}
The proof of Theorem \ref{thm:outdegree} follows closely the proof of Theorem \ref{thm:elliptical}, where instead of using Theorem \ref{thm:NO} we apply Theorem \ref{thm:singoutdegree} below.  The main difference comes from the fact that we must now deal with the trivial eigenvalue at $s$.  

Let $f$ be the characteristic polynomial of $\mat M_{n}$, 
and note that all eigenvalues of $\mat M_{n}$ have absolute value at most $n$ with probability 1 by an elementary bound.
Let $\Omega:= \{z \in \mathbb C: \abs z \le n, z \neq s \}$, and note that by Theorem~\ref{thm:singoutdegree} below, we may take $p=n^{-B'}$ for any constant $B'>0$ in \eqref{eq:zerobnd}. Thus, we may apply Theorem~\ref{thm:main} and Lemma~\ref{lem:collected-bounds}(iv) (with $M=n$ and $m=1$) to complete the proof.
%
\end{proof}

It remains to verify the following bound.  

\begin{theorem} \label{thm:singoutdegree}
Let $0 < \eps \leq 1$ and $B > 0$ be a constants, and let $x \in \{0,1\}^n$ be a random binary vector uniformly chosen from among all binary vectors containing exactly $s$ ones for some $s$ satisfying $|s - n/2| \leq (1-\eps)n/2$.  If $\mat M_n$ is a random $n$ by $n$ matrix whose rows are iid copies of the vector $x$, then, for $n$ sufficiently large (depending only on $\eps$ and $B$), 
\begin{equation} \label{eq:odbnd1}
	\sup_{z \in \mathbb{C}, \ z \neq s} \Prob(z \mbox{ is an eigenvalue of } \mat M_n) \leq n^{-B}  
\end{equation}
and
\begin{equation} \label{eq:odbnd2}
	\Prob( s \mbox{ is an eigenvalue of } \mat M_n \mbox{ with algebraic multiplicity at least } 2) \leq n^{-B}. 
\end{equation}
\end{theorem}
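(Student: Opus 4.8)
The plan is to reduce both \eqref{eq:odbnd1} and \eqref{eq:odbnd2} to singularity estimates for a random matrix with prescribed row sums, and to prove those via an anti-concentration inequality for linear functionals of a uniform $s$-subset. For \eqref{eq:odbnd1}, fix $z\in\mathbb C$ with $z\ne s$; since every eigenvalue of $\mat M_n$ has modulus at most $n$ we may assume $|z|\le n$, so that \eqref{eq:odbnd1} asserts $\Pr(\mat M_n-z\mat I_n\text{ is singular})\le n^{-B}$. Write $R_1,\dots,R_n$ for the rows of $\mat M_n$, which are iid indicators of uniform $s$-element subsets of $[n]$. Singularity is equivalent to linear dependence of the vectors $R_i-z e_i$; summing over which of the $n$ rows lies in the span of the others (using that $\mat M_n$ is invariant in law under simultaneous row and column permutations) it suffices to bound $n\cdot\Pr(R_n-z e_n\in\operatorname{span}(R_1-z e_1,\dots,R_{n-1}-z e_{n-1}))$, and after conditioning on $R_1,\dots,R_{n-1}$ this becomes $\Pr(w\cdot R_n=z\,w_n\mid w)$ for $w$ a unit normal to that span. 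The hypothesis $z\ne s$ enters here: since $\mathbf 1\cdot(R_i-z e_i)=s-z\ne 0$, the normal vector $w$ is never proportional to $\mathbf 1$, the one direction that would force this conditional probability to equal $1$.

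The probabilistic input is a small-ball bound for $v\cdot R$, where $R$ is the indicator of a uniform random $s$-subset $S\subseteq[n]$. Fixing a pairing of $[n]$ into $\lfloor n/2\rfloor$ pairs and conditioning on the occupation numbers $X_t=|S\cap P_t|\in\{0,1,2\}$, the choice within each pair with $X_t=1$ is an independent fair sign, independent of everything else, so $v\cdot R=C+\tfrac12\sum_{t:\,X_t=1}\eta_t(v_{j_t}-v_{k_t})$ with $C$ measurable with respect to $(X_t)_t$ and $\eta_t$ iid signs. The hypothesis $|s-n/2|\le(1-\eps)n/2$ guarantees, via a hypergeometric concentration bound, that at least $c_\eps n$ pairs satisfy $X_t=1$ outside an event of probability $e^{-c'_\eps n}$; choosing the pairing adapted to $v$ and invoking the Erd\H{o}s--Littlewood--Offord inequality (over $\mathbb C$) then gives $\sup_a\Pr(v\cdot R=a)\le C_\eps/\sqrt n$ whenever $v$ is ``spread'', i.e.\ not within a fixed small $\ell^2$-distance of a vector of the form $c\mathbf 1$ plus a vector supported on $o(n)$ coordinates. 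Upgrading $1/\sqrt n$ to $n^{-B-1}$ is the inverse Littlewood--Offord phenomenon: either $\sup_a\Pr(v\cdot R=a)\le n^{-B-1}$, or the differences $v_{j_t}-v_{k_t}$ are trapped (essentially) in a generalized arithmetic progression of bounded rank and polynomial size, forcing $v$ to be close to $c\mathbf 1$ plus a highly structured vector.

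The heart of the argument, and the step I expect to be the main obstacle, is ruling out the structured alternative for the random normal vector $w$ — equivalently, showing that $\operatorname{span}(R_1-z e_1,\dots,R_{n-1}-z e_{n-1})$ admits no structured normal direction with probability $1-O(n^{-B})$. Structured directions close to $\mathbf 1$ are excluded quantitatively from $|\mathbf 1\cdot(R_i-z e_i)|=|s-z|>0$ (using $z\ne s$), while the remaining structured directions are controlled by the inverse-Littlewood--Offord counting scheme of Tao--Vu and Nguyen--Vu adapted to sampling without replacement: one stratifies the structured unit vectors by their additive complexity and, at each level, balances a counting bound on such vectors against $\Pr(w'\cdot R_i=z\,w'_i)\le 1-c_\eps$ — valid for any structured $w'\not\parallel\mathbf 1$ by the swapping argument of the previous paragraph — raised to the power $n-1$; since $|z|\le n$ it suffices to carry out this balancing at polynomial scale. (Alternatively, one may cite an off-the-shelf small-ball/invertibility estimate for dense random matrices with fixed row sums.) Granting this structural step, \eqref{eq:odbnd1} follows.

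For \eqref{eq:odbnd2}, use $\mat M_n\mathbf 1=s\mathbf 1$: the operator $\mat M_n$ descends to an operator $\bar{\mat M}_n$ on $\mathbb C^n/\langle\mathbf 1\rangle\cong\mathbf 1^{\perp}$, and $\det(z\mat I_n-\mat M_n)=(z-s)\det(z\mat I_{n-1}-\bar{\mat M}_n)$, so $s$ is an eigenvalue of $\mat M_n$ of algebraic multiplicity at least $2$ if and only if $\bar{\mat M}_n-s\mat I_{n-1}$ is singular. In a basis adapted to $\mathbf 1$, $\bar{\mat M}_n$ is a fixed bounded-rank modification of $\mat M_n$ restricted to $\mathbf 1^{\perp}$ — its coordinate functionals are fixed linear images of the iid uniform-subset rows $R_i$ (involving only the tightly concentrated column sums, which are themselves functions of the $R_i$) — so the swapping small-ball estimate and the inverse-Littlewood--Offord step above apply with only notational changes, now with the dangerous direction $\mathbf 1$ already quotiented away so that the structural analysis is in fact cleaner. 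This yields $\Pr(\bar{\mat M}_n-s\mat I_{n-1}\text{ is singular})\le n^{-B}$ for all sufficiently large $n$, which is \eqref{eq:odbnd2}, completing the plan.
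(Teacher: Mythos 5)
Your overall strategy is in the same family as the paper's, but the paper takes a decisive shortcut that you only mention parenthetically: instead of re-deriving a small-ball/inverse-Littlewood--Offord theory for matrices sampled without replacement, the paper transforms $\mat M_n$ into the Nguyen--Vu setting and cites \cite[Theorem~2.8]{NV} directly. Concretely, the paper sets $\mat X_n := 2\mat M_n - \mat J_n$, a $\pm1$ random matrix with fixed row sum $2s-n$ and $|2s-n|\le(1-\eps)n$, which is precisely the class of matrices for which Nguyen and Vu already proved the needed uniform singularity bound $\sup_{|z|\le 2n}\Pr(\mat X_{n-1}+\mat F\text{ has eigenvalue }z)\le n^{-B}$ for any deterministic $\|\mat F\|\le n^2$. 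The entire structural/anti-concentration machinery you outline (pairing trick, hypergeometric concentration, Erd\H{o}s--Littlewood--Offord, inverse theorems, counting over structured vectors) is thereby outsourced. The paper's reduction is also slicker than yours on the algebraic side: by column- and row-operations one obtains the exact identity $\det(\mat M_n - z\mat I_n)=(s-z)\det(\mat M'')$ with $\det(\mat M'')=\det(\mat M_{n-1}-\mat Q_{n-1}-z\mat I_{n-1})$ for an explicit rank-one $\mat Q_{n-1}$ independent of $\mat M_{n-1}$, so that \eqref{eq:odbnd1} and \eqref{eq:odbnd2} follow simultaneously from the single estimate $\sup_{|z|\le n}\Pr(\det\mat M''=0)\le n^{-B}$.

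There is a genuine gap in the route you put forward as your main plan, at exactly the place you flagged as the main obstacle. You exclude structured normal directions near $\mathbf 1$ by observing $|\mathbf 1\cdot(R_i-ze_i)|=|s-z|>0$, but this is not quantitative: for $z$ close to (but unequal to) $s$, $|s-z|$ can be arbitrarily small, and the resulting exclusion of $w\approx\mathbf 1/\sqrt n$ from the structured set degrades and ultimately fails to give a bound uniform over $z\in\Omega$. Since \eqref{eq:odbnd1} requires a supremum over all $z\ne s$ (with $|z|\le n$), this is a real hole, not merely a technical annoyance. The paper's identity $\det(\mat M_n-z\mat I)=(s-z)\det(\mat M'')$ removes the dangerous $(s-z)$ factor algebraically rather than probabilistically, so there is no degeneration as $z\to s$; your quotient-space reduction for \eqref{eq:odbnd2} is the right idea but needs to be applied to \eqref{eq:odbnd1} as well (for all $z$, not just $z=s$) to avoid the problem. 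If you instead invoke the off-the-shelf estimate as your primary tool and combine it with a uniform-in-$z$ algebraic reduction of the type in the paper, the argument closes.
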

\begin{proof}
The proof follows the arguments given by Nguyen and Vu in \cite{NV}.  We begin with the bound in \eqref{eq:odbnd1}.  Let $\Omega := \{z \in \mathbb{C} : |z| \leq n, z \neq s\}$.  Since, with probability $1$, all eigenvalues of $\mat M_n$ are contained in the disk $\{z \in \mathbb{C} : |z| \leq n\}$, it suffices to show
$$ \sup_{z \in \Omega} \Prob(z \mbox{ is an eigenvalue of } \mat M_n) \leq n^{-B} $$
for $n$ sufficiently large.  Define the matrix
$ \mat X_n := 2 \mat M_n - \mat J_n, $
where $\mat J_n$ is the $n \times n$ all-ones matrix.  In particular, $\mat X_n$ is an $n \times n$ random matrix with $+1$ and $-1$ entries whose rows are independent with row sum $2s - n$, where $|2s-n| \leq (1-\eps)n$.  Such matrices were explicitly studied in \cite{NV}, and the estimate below follows from \cite[Theorem 2.8]{NV}.  Let $\mat M_{n-1}$ be the $(n-1) \times (n-1)$ submatrix of $\mat M_n$ formed from $\mat M_n$ by removing the last row and column.  Similarly, let
$ \mat X_{n-1} := 2 \mat M_{n-1} - \mat J_{n-1}. $
Then, for any deterministic matrix $\mat F$ satisfying $\|\mat F\| \leq n^2$, \cite[Theorem 2.8]{NV} implies that
\begin{equation} \label{eq:supzNV}
	\sup_{z \in \mathbb{C}, \ |z| \leq 2n} \Prob(z \mbox{ is an eigenvalue of } \mat X_{n-1} + \mat F) \leq n^{-B} 
\end{equation}
for all $n$ sufficiently large (depending only on $\eps$ and $B$). 

The advantage of working with $\mat M_{n-1}$ is that it does not have a trivial eigenvalue at $s$.  Thus, we will reduce to the case where the bound in \eqref{eq:supzNV} is relevant.  Let $m_{ij}$ denote the $(i,j)$-entry of $\mat M_n$.  Define $\mat M := \mat M_n - z \mat I_n$.  Then $\det (\mat M) = \det (\mat M')$, where $\mat M'$ is obtained from $\mat M$ by adding the first $n-1$ columns to the last column.  Since each entry of the last column of $\mat M'$ takes the value $s - z$, $\det (\mat M') = (s-z) \det (\mat M'')$, where $\mat M''$ is obtained from $\mat M$ by replacing each entry in the last column by $1$, i.e., 
$$ \mat M'' := \begin{bmatrix} m_{1,1} - z & m_{1,2} & \dots & m_{1, n-1} & 1 \\ \vdots & \vdots & \ddots & \vdots & \vdots \\ m_{n-1,1} & m_{n-1, 2} & \dots & m_{n-1,n-1} - z & 1 \\ m_{n,1} & m_{n,2} & \dots & m_{n, n-1} & 1 \end{bmatrix}. $$
Since $s \not\in \Omega$, it now suffices to show
\begin{equation} \label{eq:odsufftoshow}
	\sup_{z \in \mathbb{C}, \ |z| \leq n} \Prob(\det(\mat M'') = 0) \leq n^{-B} 
\end{equation}
for $n$ sufficiently large.  Additionally, as $\det(\mat M_n - z \mat I) = (s - z) \det \mat (\mat M'')$, the bound in \eqref{eq:odsufftoshow} would also imply \eqref{eq:odbnd2}.  

By subtracting the last row of $\mat M''$ from each of the previous $n-1$ rows, it follows that
$ \det( \mat M'') = \det (\mat M_{n-1} - \mat Q_{n-1} - z \mat I_{n-1}), $
where $\mat Q_{n-1}$ is an $(n-1) \times (n-1)$ rank-one matrix whose rows are each given by $(m_{n,1}, \ldots, m_{n,n-1})$.  Since the entries $m_{n,1}, \ldots, m_{n,n-1}$ are independent of the entries in $\mat M_{n-1}$, we condition on $\mat Q_{n-1}$ and now treat this matrix as deterministic.  Observe that
$ \det (\mat M_{n-1} - \mat Q_{n-1} - z \mat I_{n-1}) = 0 $
if and only if $2z$ is an eigenvalue of
$$ 2 \mat M_{n-1} - 2\mat Q_{n-1} = \mat X_{n-1} - 2 \mat Q_{n-1} + \mat J_{n-1} =: \mat X_{n-1} + \mat F. $$
By an elementary bound, 
$$ \|\mat F\| \leq 2 \| \mat Q_{n-1} \| + \|\mat J_{n-1} \| \leq 3n \leq n^2 $$
for $n \geq 3$.  Therefore, we conclude from \eqref{eq:supzNV} that
$$ \sup_{z \in \mathbb{C}, \  |z| \leq n} \Prob( \det (\mat M_{n-1} - \mat Q_{n-1} - z \mat I_{n-1}) = 0 ) \leq n^{-B} $$
for $n$ sufficiently large, and the proof is complete.  
\end{proof}

\subsection{Other models}
In the previous subsections, we focused on random polynomials models for which good pointwise delocalization bounds are known, especially characteristic polynomials of random matrices.  For example, the approach above  also works for sparse random matrices, using Tao and Vu's \cite[Theorem~2.9]{TVsparse} to show pointwise delocalization.

However, there are many other models of random matrices one could consider.  For instance, sample covariance matrices arise in many applications and are well-studied in the random matrix theory literature.  Yet, the authors are not aware of delocalization bounds of the form required for Theorem~\ref{thm:main}.  Another interesting model is random matrices with exchangeable entries.  While Adamczak, Chafa\"{i}, and Wolff \cite{ACW} have obtained some delocalization bounds for such matrices, the bounds are not strong enough to use with Theorem~\ref{thm:main}.   Some delocalization bounds have been proven by Cook \cite{C1, C2} for the adjacency matrix and signed adjacency matrix of such graphs, and it would be interesting to see if 
strong enough bounds could be proven to combine with Theorem~\ref{thm:main}.

\section{Motivation: Random graphs and controllability} \label{sec:control}

As discussed above, our main results are motivated by the question of whether a random polynomial with integer coefficients is likely to be irreducible.  In particular, we have focused on characteristic polynomials of matrices, and it is natural to ask whether such models have applications.  

In this section, we motivate these models by discussing graphs and their adjacency matrices.  Unsurprisingly, certain properties of a graph can be deduced from the characteristic polynomial of its adjacency matrix.  Specifically, we focus on the property of symmetry, which in turn is related to controllability properties of a certain linear system formed from the graph.  In this section, we provide a brief introduction to linear control theory, random graphs, and their connection with our main results.  The uninterested reader can safely skip this section.  

\subsection{Linear control theory}
Generally speaking, linear control theory is concerned with controlling linear systems, so the output (or solution) of the system follows a desired path.  In what follows, we shall consider a very specific linear system formed from a matrix $\mat A$ and a vector $b$.  

Let $\mat A$ be an $n \times n$ matrix with real entries, and let $b$ be a vector in $\mathbb{R}^n$.  Then the \emph{continuous time-invariant control system} formed from the pair $(\mat A,b)$ is defined by the equation
\begin{equation} \label{eq:linear}
	\dot{x}(t) = \mat A x(t) + u(t) b, 
\end{equation}
where $u: [t_0, t_1] \to \mathbb{R}$ is called the \emph{control} and $x: [t_0, t_1] \to \mathbb{R}^n$ is called the \emph{state} of the system.  Here, $\dot{x}$ denotes the time derivative of $x$.  We typically view $\mat A, b$, and $u$ as given values and $x$ as the solution to \eqref{eq:linear}.  In particular, given $\mat A, b$, an initial value $x(t_0)$, and sufficiently smooth $u$, the state $x$ is uniquely determined by \eqref{eq:linear}.

We want to consider the general property of being able to ``steer'' such a system from any given state to any other by a suitable choice of the control function $u$.  This ability to ``steer'' the system is what we will mean by the term controllability.  

\begin{definition}[Complete controllability]
Let $\mat A$ be an $n \times n$ matrix with real entries, and let $b$ be a vector in $\mathbb{R}^n$.  We say the pair $(\mat A,b)$ is \emph{completely controllable} if, for any $t_0$, any initial state $x(t_0) = x_0$, and any given final state $x_f$, there exists $t_1 > t_0$ and a piecewise continuous control $u:[t_0, t_1] \to \mathbb{R}$ such that the solution (state) of \eqref{eq:linear} satisfies $x(t_1) = x_f$.  
\end{definition}

\begin{remark}
The qualifying term ``completely'' implies that the definition holds for all $x_0$ and $x_f$.  In general, several other types of controllability can also be defined.   
\end{remark}

The basic problem that now arises is to describe exactly which pairs $(\mat A,b)$ are completely controllable.  Kalman's rank condition \cite{K2, K3, Klec, K} gives a general algebraic criterion.  

\begin{theorem}[Kalman \cite{K}] \label{thm:kalman}
Let $\mat A$ be an $n \times n$ matrix with real entries, and let $b$ be a vector in $\mathbb{R}^n$.  The pair $(\mat A,b)$ is completely controllable if and only if the \emph{controllability matrix}
\begin{equation} \label{eq:kalmanmatrix}
	\begin{bmatrix} \mat b & \mat Ab & \mat A^2 b & \cdots & \mat A^{n-1} b \end{bmatrix} 
\end{equation}
has full rank (that is, rank $n$).  Here, the matrix in \eqref{eq:kalmanmatrix} is the $n \times n$ matrix with columns $b$, $\mat Ab$, $\mat A^2b$, \ldots, $\mat A^{n-1}b$.  
\end{theorem}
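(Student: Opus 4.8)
The plan is to derive Kalman's criterion from the explicit solution of the control system \eqref{eq:linear} together with the Cayley--Hamilton theorem. First recall that, for a piecewise continuous control $u$ on $[t_0,t_1]$ and initial state $x(t_0)=x_0$, the variation-of-constants formula gives the unique solution of \eqref{eq:linear} as
\[
	x(t_1) = e^{\mat A(t_1-t_0)}x_0 + \int_{t_0}^{t_1} e^{\mat A(t_1-s)} b\, u(s)\, ds .
\]
Since $e^{\mat A(t_1-t_0)}$ is invertible, one can steer from $x_0$ to $x_f$ in time $t_1-t_0$ by a suitable control if and only if $x_f - e^{\mat A(t_1-t_0)}x_0$ lies in the \emph{reachable set}
\[
	\mathcal R(t_1) := \left\{ \int_{t_0}^{t_1} e^{\mat A(t_1-s)}b\, u(s)\, ds : u \text{ piecewise continuous on } [t_0,t_1] \right\},
\]
which is readily seen to be a linear subspace of $\mathbb R^n$. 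Hence $(\mat A,b)$ is completely controllable if and only if $\mathcal R(t_1)=\mathbb R^n$ for some (equivalently, as we will see, every) $t_1>t_0$.

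The crux is the identity $\mathcal R(t_1) = \mathcal C$ for every $t_1 > t_0$, where $\mathcal C := \Im\begin{bmatrix} \mat b & \mat Ab & \cdots & \mat A^{n-1}b\end{bmatrix}$ is the column space of the controllability matrix \eqref{eq:kalmanmatrix}. For the inclusion $\mathcal R(t_1)\subseteq\mathcal C$, Cayley--Hamilton lets us write $e^{\mat A\tau} = \sum_{j=0}^{n-1}\alpha_j(\tau)\mat A^j$ with scalar functions $\alpha_j$, so $e^{\mat A(t_1-s)}b\in\mathcal C$ for each $s$, and integrating against $u$ stays in the (closed) subspace $\mathcal C$. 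For the reverse inclusion I would argue by orthogonal complements: suppose $v\in\mathbb R^n$ satisfies $\transp{v}w=0$ for all $w\in\mathcal R(t_1)$; then $\int_{t_0}^{t_1}\transp{v}e^{\mat A(t_1-s)}b\,u(s)\,ds = 0$ for every piecewise continuous $u$, and taking $u(s) = \transp{v}e^{\mat A(t_1-s)}b$ forces the continuous scalar function $s\mapsto \transp{v}e^{\mat A(t_1-s)}b$ to vanish identically on $[t_0,t_1]$; evaluating this function and its successive derivatives in $s$ at $s=t_1$ yields $\transp{v}\mat A^j b = 0$ for all $j\ge 0$, i.e. $v\in\mathcal C^{\perp}$. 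Thus $\mathcal R(t_1)^{\perp}\subseteq\mathcal C^{\perp}$, giving $\mathcal C\subseteq\mathcal R(t_1)$. (Alternatively one can route the reverse inclusion through the controllability Gramian $\int_{t_0}^{t_1}e^{\mat A(t_1-s)}b\,\transp{b}\,e^{\transp{\mat A}(t_1-s)}\,ds$, showing that its range equals $\mathcal C$ and that every vector in that range is reachable by an explicit control.)

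Combining the two directions, $(\mat A,b)$ is completely controllable if and only if $\mathcal C = \mathbb R^n$, i.e. the controllability matrix \eqref{eq:kalmanmatrix} has rank $n$; here Cayley--Hamilton is again used to note that $\mathcal C$ already contains $\mat A^j b$ for all $j\ge n$, so no information is lost by stopping at the power $n-1$. I expect the main obstacle to be the careful justification of the step where orthogonality of $v$ to the whole reachable set forces $s\mapsto \transp{v}e^{\mat A(t_1-s)}b$ to be identically zero (which uses that piecewise continuous controls are rich enough to ``detect'' any nonzero continuous function) together with the bookkeeping in differentiating the matrix exponential repeatedly; the Cayley--Hamilton inputs and the variation-of-constants formula are standard.
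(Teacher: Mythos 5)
The paper does not actually prove Theorem~\ref{thm:kalman}; it is stated as a classical result of Kalman and merely cited (with references \cite{K2,K3,Klec,K} and to the textbook \cite{Hlst}), so there is no internal proof to compare your argument against. That said, your argument is the standard textbook proof: variation of constants reduces complete controllability to the question of whether the reachable set $\mathcal R(t_1)$ is all of $\mathbb R^n$, and the two inclusions $\mathcal R(t_1)\subseteq\mathcal C$ (via Cayley--Hamilton writing $e^{\mat A\tau}$ as a polynomial of degree $<n$ in $\mat A$) and $\mathcal C\subseteq\mathcal R(t_1)$ (via the orthogonal complement / Gramian argument, using that $v^{\mathrm T}e^{\mat A(t_1-s)}b\equiv 0$ and its derivatives at $s=t_1$ give $v^{\mathrm T}\mat A^j b=0$ for all $j$) are exactly the two halves one finds in, e.g., Hespanha's text \cite{Hlst}. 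Your proof is correct and complete; it is just not something the authors chose to reproduce, since this theorem is background from control theory rather than one of their contributions.
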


Theorem \ref{thm:kalman} is so convenient that this rank condition is often taken as the definition of controllability.  In fact, from this point forward, we will no longer consider the linear system in \eqref{eq:linear}.  Instead, we will only focus on the controllability matrix \eqref{eq:kalmanmatrix}.  To this end, we make the following definition.

\begin{definition}[Controllability] 
Let $\mat A$ be an $n \times n$ matrix with real entries, and let $b$ be a vector in $\mathbb{R}^n$.  We say the pair $(\mat A,b)$ is \emph{controllable} if the controllability matrix, defined in \eqref{eq:kalmanmatrix}, has rank $n$.  If $(\mat A,b)$ is not controllable, we say the pair is \emph{uncontrollable}.  
\end{definition}

\begin{remark}
In view of Theorem \ref{thm:kalman}, controllability and complete controllability are equivalent.  We drop the qualifying term ``complete'' as this is the only type of controllability we will consider.  
\end{remark}

\subsection{Controllable subsets in graphs}
Let $G$ be a simple graph on the vertex set $[n] := \{1, \ldots, n\}$ with adjacency matrix $\mat A$, i.e., $\mat A$ is a real symmetric matrix with entry $(i,j)$ equal to $1$ if there is an edge between vertices $i$ and $j$, and the entry is equal to zero otherwise.  In this section, we focus on the controllability of $(\mat A, b)$.  Of particular importance is the case when $b \in \{0,1\}^n$ is a binary vector.  Indeed, in this case, $b$ can be viewed as the characteristic vector of some subset of the vertex set $[n]$.  We make the following definitions.  We say the simple graph $G$ on $n$ vertices is \emph{controllable} if $(\mat A, \1)$ is controllable, where $\mat A$ is the adjacency matrix of $G$ and $\1$ is the all-ones vector in $\mathbb{R}^n$.  Additionally, we say $G$ is \emph{minimally controllable} if $(\mat A, e_i)$ is controllable for every $1 \leq i \leq n$, where $e_1, \ldots, e_n$ is the standard basis of $\mathbb{R}^n$.  

Studying the controllability properties of large scale graphs and networks has become an important and challenging task in control theory with several real-world applications.  For instance, one of the emerging applications of network controllability is the control of neural networks inside the brain and its relation to behavioral regulation \cite{FHEPL,GPetal}.  In this application,\footnote{Both of the applications mentioned here typically involve studying matrices other than the adjacency matrix of the underlying graph.  For simplicity, we will only consider the adjacency matrix in this paper.} the neural network in the brain is modeled as a graph with each vertex representing a neuron or region in the brain.  

Another application involves studying social influence.  Indeed, with the prevalence of online social networks, social influence is now a highly studied topic due, in part, to its use in categorizing efficient mechanisms for the spread of information as well as identification of susceptible members of society \cite{AW, BFJK}.  In this application, the graph in question is the social network, and the characteristic vector $b$ can be viewed as identifying the ``leaders'' in the network who try to control the other individuals.  

We recall the following elementary definitions.  \emph{Isomorphisms} of simple graphs are bijections of the vertex sets preserving adjacency as well as non-adjacency.  \emph{Automorphisms} of the graph $G$ are $G \to G$ isomorphisms.  Clearly, the identity map is always an automorphism.  A graph is called \emph{asymmetric} if it has no non-trivial automorphisms.  

We now discuss some connections which exist between controllability, asymmetry, and the characteristic polynomial of the adjacency matrix.  

\begin{proposition}[Godsil, following from Lemma 1.1 in \cite{G}] \label{prop:godsil}
If the simple graph $G$ is controllable, then $G$ is asymmetric.  
\end{proposition}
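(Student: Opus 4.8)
The plan is to use the standard dictionary between automorphisms of $G$ and permutation matrices that commute with the adjacency matrix, together with the elementary fact that every permutation matrix fixes the all-ones vector. First I would recall that a bijection $\sigma$ of the vertex set $[n]$ is an automorphism of $G$ if and only if the corresponding permutation matrix $\mat P_\sigma$ satisfies $\mat P_\sigma \mat A = \mat A \mat P_\sigma$, where $\mat A$ is the adjacency matrix of $G$; this is immediate from the definition of $\mat A$ and the fact that an isomorphism preserves both adjacency and non-adjacency. Next I would observe that any permutation matrix fixes the all-ones vector, i.e.\ $\mat P_\sigma \1 = \1$.

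From these two facts, a one-line induction on $k$ gives that $\mat P_\sigma$ fixes every vector $\mat A^k \1$: indeed $\mat P_\sigma \1 = \1$ is the base case, and if $\mat P_\sigma \mat A^k \1 = \mat A^k \1$ then $\mat P_\sigma \mat A^{k+1}\1 = \mat A (\mat P_\sigma \mat A^k \1) = \mat A^{k+1}\1$ using $\mat P_\sigma \mat A = \mat A \mat P_\sigma$. In particular, $\mat P_\sigma$ fixes each column of the controllability matrix of the pair $(\mat A, \1)$ from \eqref{eq:kalmanmatrix} (with $b = \1$), hence fixes every vector in the column space of that matrix.

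Finally, if $G$ is controllable, then by definition the controllability matrix of $(\mat A,\1)$ has rank $n$, so its columns span $\mathbb{R}^n$. Since $\mat P_\sigma$ is linear and fixes a spanning set of $\mathbb{R}^n$, it must fix all of $\mathbb{R}^n$, forcing $\mat P_\sigma = \mat I_n$ and therefore $\sigma = \mathrm{id}$. Thus $G$ has no non-trivial automorphism, i.e.\ $G$ is asymmetric. There is essentially no hard step here; the only points requiring care are the correct direction of the automorphism--permutation-matrix correspondence and the observation $\mat P_\sigma \1 = \1$, which is precisely what makes the all-ones choice of the control vector $b$ relevant. (One should also note that the converse fails in general: an asymmetric graph need not be controllable.)
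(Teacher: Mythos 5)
Your proof is correct and is the standard argument; the paper itself does not supply a proof of this proposition, instead citing Godsil's Lemma 1.1, but the argument you give (automorphisms correspond to permutation matrices commuting with $\mat A$, permutation matrices fix $\1$, hence fix every $\mat A^k \1$ and therefore the whole column space of the controllability matrix, which is all of $\mathbb{R}^n$ when $G$ is controllable) is precisely the one Godsil uses. No gaps.
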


Godsil, in \cite{G}, showed a connection between the characteristic polynomial of the adjacency matrix and controllability.  

\begin{theorem}[Godsil, Corollary 5.3 in \cite{G}] \label{thm:godsil}
Let $G$ be a simple graph with adjacency matrix $\mat A$.  If the characteristic polynomial of $\mat A$ is irreducible over the rationals, then $G$ is controllable and minimally controllable.  
\end{theorem}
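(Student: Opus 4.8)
The plan is to reduce the controllability statement to a statement about \emph{cyclic vectors} and then exploit the arithmetic forced on the eigenvalues by irreducibility of the characteristic polynomial $p$. First recall that, by definition, $(\mat A, b)$ is controllable iff the controllability matrix $[\,b\ \ \mat A b\ \ \cdots\ \ \mat A^{n-1}b\,]$ has rank $n$, i.e. iff $b$ is cyclic for $\mat A$. Since $\mat A$ is real symmetric it has an orthonormal eigenbasis $v_1,\dots,v_n$ with real eigenvalues $\lambda_1,\dots,\lambda_n$, and since $p$ is irreducible over $\mathbb{Q}$ (a field of characteristic zero) it is separable, so the $\lambda_j$ are pairwise distinct and every eigenspace is one-dimensional. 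Writing $b=\sum_j c_j v_j$ with $c_j = v_j^{\mathrm T} b$, the identity $\mat A^m b = \sum_j \lambda_j^m c_j v_j$ exhibits the controllability matrix as $V\cdot \diag(c_1,\dots,c_n)\cdot W$, where $V$ is orthogonal and $W=(\lambda_j^{\,m})_{1\le j\le n,\,0\le m\le n-1}$ is a Vandermonde matrix; its determinant is $\pm\bigl(\prod_j c_j\bigr)\prod_{i<j}(\lambda_j-\lambda_i)\neq 0$ exactly when all $c_j\neq 0$. Hence it suffices to show that no eigenvector $v_j$ is orthogonal to $\1$ and that no eigenvector has a zero coordinate — the latter giving $v_j^{\mathrm T} e_i=(v_j)_i\neq 0$ and thus controllability of each $(\mat A,e_i)$.

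The key step is a conjugation (Galois) argument. Fix $\lambda:=\lambda_1$ and set $K=\mathbb{Q}(\lambda)$; since $p$ is irreducible of degree $n$ with all roots real, there are exactly $n$ field embeddings $\sigma_1,\dots,\sigma_n\colon K\hookrightarrow\mathbb{R}$, with $\sigma_j(\lambda)=\lambda_j$. Because $\lambda$ is a simple root, $\ker(\mat A-\lambda\mat I)$ is one-dimensional and can be spanned by a vector $v\in K^n$ with entries in $K$ — for instance a nonzero column of the adjugate of $\mat A-\lambda\mat I$, whose entries are polynomials in $\lambda$ with rational coefficients, and which is nonzero precisely because $\mat A-\lambda\mat I$ has rank $n-1$. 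Applying $\sigma_j$ entrywise and using that $\mat A$ has integer entries (so $\sigma_j$ fixes it), we get $(\mat A-\lambda_j\mat I)\sigma_j(v)=\sigma_j\bigl((\mat A-\lambda\mat I)v\bigr)=0$ with $\sigma_j(v)\neq 0$; thus $\sigma_j(v)$ spans the $\lambda_j$-eigenspace. As the $\lambda_j$ are distinct, $\{\sigma_1(v),\dots,\sigma_n(v)\}$ is a basis of $\mathbb{R}^n$, and every eigenvector of $\mat A$ is a nonzero scalar multiple of some $\sigma_j(v)$.

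Now conclude. If some coordinate $v_k=0$, then $\sigma_j(v)_k=\sigma_j(v_k)=0$ for all $j$, so $e_k$ is orthogonal to the basis $\{\sigma_j(v)\}$, forcing $e_k=0$ — absurd; hence every coordinate of $v$, and of every eigenvector, is nonzero, which gives $(v_j)_i\neq 0$ for all $i,j$ and therefore controllability of each $(\mat A,e_i)$, i.e. minimal controllability of $G$. Similarly, if $\1^{\mathrm T}\sigma_j(v)=0$ for some $j$, then $\sigma_j\bigl(\sum_k v_k\bigr)=0$, so $\sum_k v_k=0$ by injectivity of $\sigma_j$, whence $\1^{\mathrm T}\sigma_{j'}(v)=0$ for all $j'$, contradicting that these vectors span; thus $v_j^{\mathrm T}\1\neq 0$ for all $j$ and $(\mat A,\1)$ is controllable, i.e. $G$ is controllable. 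I expect the main obstacle to be the middle step — producing an eigenvector with coordinates in $\mathbb{Q}(\lambda)$ and verifying that embeddings carry it to eigenvectors for the conjugate eigenvalues; once that is in place, the rest is the elementary observation that ``having a zero coordinate'' or ``being orthogonal to $\1$'' is preserved under all conjugations and hence incompatible with the conjugates spanning $\mathbb{R}^n$.
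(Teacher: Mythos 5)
The paper does not prove this theorem; it is stated verbatim as a citation of Godsil's Corollary 5.3 in \cite{G}, so there is no proof in the source to compare against. Your blind proof, however, is correct and self-contained. The reduction of controllability of $(\mat A,b)$ to the conditions ``all eigenvalues simple'' and ``$b$ not orthogonal to any eigenvector'' via the factorization $V\diag(c_j)W$ with $W$ Vandermonde is standard and accurate, and the hypothesis that the characteristic polynomial is irreducible over $\mathbb{Q}$ gives separability (characteristic zero), hence distinct eigenvalues, for free. The central Galois step is also sound: taking an eigenvector $v$ with entries in $K=\mathbb{Q}(\lambda)$ (a nonzero column of $\operatorname{adj}(\mat A-\lambda\mat I)$, which exists because $\mat A-\lambda\mat I$ has rank $n-1$), and pushing it forward along the $n$ real embeddings $\sigma_j$ of $K$, correctly produces a full set of eigenvectors because $\sigma_j$ fixes the integer matrix $\mat A$ and sends $\lambda$ to $\lambda_j$. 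The final two contradictions --- that a common zero coordinate or a common orthogonality to $\1$ would confine all the $\sigma_j(v)$ to a hyperplane --- hinge only on injectivity of the embeddings and the fact that eigenvectors for distinct eigenvalues are independent, and both are handled correctly. This is essentially the same idea Godsil uses (irreducibility forces each entry of an eigenvector, and each component $\1^{\mathrm T}v_j$, to be a primitive element of a degree-$n$ field, hence nonzero), packaged here cleanly through the adjugate trick; it is a perfectly valid substitute for the cited proof.
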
 

Putting these two results together, we recover the well-known implication (see, for example, \cite{CG}) that if $G$ is a simple graph with adjacency matrix $\mat A$ and the characteristic polynomial of $\mat A$ is irreducible over the rationals, then $G$ is asymmetric.

\subsection{Conjectures and results concerning random graphs}
Recall that $G(n,p)$ is the Erd\"os--R\'enyi random graph on the vertex set $[n]$ with edge density $p$.  That is, $G(n,p)$ is a simple graph on $n$ vertices (which we shall label as $1, \ldots, n$) such that each edge $\{i,j\}$ is in $G(n,p)$ with probability $p$, independent of other edges.  In the special case when $p=1/2$, one can view $G(n,1/2)$ as random graph selected uniformly among all $2^{\binom{n}{2}}$ simple graphs on $n$ vertices.  We let $\mat A_n$ be the zero-one adjacency matrix of $G(n,p)$.  

It was proven by P\'{o}lya \cite{P} and Erd\H os and R\'enyi \cite{ER} that $G(n,1/2)$ is asymmetric with probability $1 - \binom{n}{2} n^{-n-2}(1 + o(1))$; see \cite{Ba} and references therein for further details.  In other words, most simple graphs are asymmetric. In view of Proposition \ref{prop:godsil}, this gives an upper bound for the probability that $G(n,p)$ is controllable.  In terms of a lower bound, Godsil \cite{G} has recently conjectured that most simple graphs are controllable and minimally controllable.  

\begin{conjecture}[Godsil \cite{G}] \label{conj:godsil}
The probability that $G(n, 1/2)$ is controllable and minimally controllable approaches $1$ as $n \to \infty$.  
\end{conjecture}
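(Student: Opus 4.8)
The plan is to route the conjecture through the characteristic polynomial $f_n$ of the adjacency matrix $\mat A_n$ of $G(n,1/2)$. By Theorem~\ref{thm:godsil}, if $f_n$ is irreducible over $\mathbb{Q}$ then $G(n,1/2)$ is both controllable and minimally controllable, so it suffices to prove $\Prob(f_n\text{ is reducible over }\mathbb{Q})\to 0$. Since $\mat A_n$ is real symmetric, $f_n$ has $n$ real roots, and $f_n$ is reducible over $\mathbb{Q}$ precisely when it has an irreducible rational factor of degree at most $\lfloor n/2\rfloor$, equivalently (by the discussion of minimal polynomials above) when it has an algebraic root of degree at most $\lfloor n/2\rfloor$. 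Thus the target reduces to: rule out algebraic roots of degree up to $n/2$.

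First I would clear out the low-degree obstructions using the machinery of this paper. Nguyen's estimate (Theorem~\ref{thm:Nsym}) gives $\sup_{|z|\le n}\Prob(f_n(z)=0)\le n^{-B'}$ for any constant $B'$, and all eigenvalues of $\mat A_n$ lie in $[-n,n]$ deterministically; feeding this into Theorem~\ref{thm:main} via Lemma~\ref{lem:collected-bounds}(iv) (with $M=n$) shows that for every fixed $K$ the polynomial $f_n$ has no algebraic root of degree at most $K$ with probability $1-O(n^{-B})$ --- this is the content of Theorem~\ref{thm:ER}. Invoking instead a Vershynin-type bound $p\le 2e^{-n^{c}}$ (available for bounded-entry symmetric matrices) together with Lemma~\ref{lem:collected-bounds}(iii) pushes this up to degrees as large as $n^{c'}$ with $c'<c/2$. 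I would also record that the ``distinct eigenvalues'' ingredient on the controllability side comes for free from Tao and Vu's theorem \cite{TV}, so that what remains for $(\mat A_n,\1)$ and for each $(\mat A_n,e_i)$ to be controllable is the non-vanishing of certain eigenvector coordinates and of certain inner products of eigenvectors with $\1$ --- an eigenvector-level anticoncentration statement.

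The hard part --- and where the present approach stops --- is the band of degrees from roughly $n^{c'}$ up to $n/2$. The quantity produced by Theorem~\ref{thm:main} is $p\,(eM)^{k^2}$, and the factor $(eM)^{k^2}=e^{k^{2}\log(en)}$ already overwhelms a polynomially small $p$ for any super-constant $k$, and would overwhelm even a hypothetical single-exponential $p=e^{-cn}$ once $k\gtrsim\sqrt n$ --- a fortiori for $k$ of order $n$ --- because $k^{2}$ sits in the exponent. Closing the gap therefore seems to require a fundamentally different mechanism, in the spirit of Konyagin's treatment of $\{0,1\}$-polynomials \cite{K01} and of the large-sieve arguments behind the Galois-group estimates cited in the introduction: one would analyze the factorization type of the reduction $\bar f_n\in\mathbb{F}_p[x]$ for many primes $p$, show that for a positive proportion of $p$ this mod-$p$ pattern is incompatible with a rational factor of a prescribed degree $k$ (using quantitative equidistribution of $\mat A_n\bmod p$ among symmetric matrices over $\mathbb{F}_p$), and then sieve the resulting constraints across primes. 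Establishing such $\mathbb{F}_p$-equidistribution with enough uniformity in $p$, while respecting the dependence forced by the symmetry $a_{ij}=a_{ji}$, is the principal obstacle, and is why only the constant-degree consequence (Theorem~\ref{thm:ER}) is recorded here rather than a resolution of the conjecture.
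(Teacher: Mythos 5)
This statement is a \emph{conjecture}, not a theorem, and the paper offers no proof of it; so there is nothing in the paper for your write-up to be compared against as a ``proof.'' What the paper does say is (a) that Conjecture~\ref{conj:godsil} was in fact resolved in \cite{OT, OT2} by an eigenvector-level argument built on Kalman's rank condition and the Popov--Belevitch--Hautus test, not via irreducibility of the characteristic polynomial, and (b) that the irreducibility route you sketch remains open: the authors explicitly remark that they ``are not aware of any progress'' toward showing the characteristic polynomial of $\mat A_n$ is irreducible with high probability.

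Your write-up is, to its credit, honest: you correctly trace the implication (irreducible $\Rightarrow$ controllable and minimally controllable, via Theorem~\ref{thm:godsil}), correctly identify that Theorem~\ref{thm:main} with $p(eM)^{k^2}$ caps the reachable degree at roughly $\sqrt{\log(1/p)/\log n}$ --- giving only constant degree under Nguyen's bound, i.e.\ Theorem~\ref{thm:ER} --- and correctly flag the chasm between that and degree $n/2$. Two adjustments would sharpen the picture. First, the Vershynin bound (Theorem~\ref{thm:Vsym}) does \emph{not} apply here, since the entries of $\mat A_n$ have nonzero mean; the paper points this out, so the $n^{c'}$ improvement you cite is not actually available for $G(n,1/2)$ with current tools. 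Second, the item you relegate to a parenthetical --- ``the non-vanishing of certain eigenvector coordinates and of certain inner products of eigenvectors with $\1$, an eigenvector-level anticoncentration statement'' --- is in fact the entire content of the actual proof in \cite{OT, OT2}, not an auxiliary ingredient; the conjecture is \emph{proved}, just not by the route this paper develops, and your speculative mod-$p$/large-sieve program is an independent (and still unrealized) alternative rather than a reconstruction of the known argument.
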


One can view Conjecture \ref{conj:godsil} as stating that controllability (alternatively, minimal controllability) is a universal property of graphs.   Conjecture \ref{conj:godsil} was recently proven in \cite{OT, OT2}.  The proof relies on Kalman's rank condition (Theorem \ref{thm:kalman}) and one of its corollaries known as the Popov--Belevitch--Hautus (PBH) test (see \cite[Section 12.2]{Hlst} for details).  In particular, the proof given in \cite{OT, OT2} involves studying the additive structure of the eigenvectors of the random adjacency matrix $\mat A_n$.  

It has also been conjectured (and numerical evidence suggests) that the characteristic polynomial of the adjacency matrix of $G(n,1/2)$ is irreducible over the rationals with high probability.  The authors are not aware of any progress in proving this conjecture.  In view of Theorem \ref{thm:godsil}, though, this conjecture would imply Conjecture \ref{conj:godsil}.  Specifically, Theorem \ref{thm:godsil} hints at another approach to prove Conjecture \ref{conj:godsil}, which would be entirely different from the proofs given in \cite{OT, OT2}.  While the proofs in these previous works focused on the eigenvector structure, this new method only requires working with the eigenvalues (in particular, the characteristic polynomial) of $\mat A_n$.  If one could show, for instance, that, with high probability, $\mat A_n$ has no eigenvalues that are algebraic of degree at most $n/2$, then both conjectures would follow.  While our main results do not go so far, they do hint that this may indeed be the case.   Theorem \ref{thm:ER} below follows from Theorem \ref{thm:Nsym} and the reasoning in Subsection~\ref{ss:simplegraphs}.

\begin{theorem} \label{thm:ER}
Fix $0<p<1$, and let $B > 0$ and $K \geq 1$ be constants.  Let $G(n,p)$ be an Erd\H os--R\'enyi random graph on $n$ vertices with edge density $p$.  Then, for $n$ sufficiently large (depending on $B$, $K$, and $p$), the probability that the adjacency matrix $\mat A_n$ of $G(n,p)$ has an eigenvalue that is algebraic of degree at most $K$ is bounded above by $n^{-B}$.  
\end{theorem}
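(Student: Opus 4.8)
\emph{Proof plan.} The plan is to run the argument used for Theorem~\ref{thm:elliptical}, substituting Theorem~\ref{thm:Nsym} for Theorem~\ref{thm:NO}. Let $f$ denote the characteristic polynomial of the adjacency matrix $\mat A_n$ of $G(n,p)$; since the entries of $\mat A_n$ are $\{0,1\}$-valued random variables, $f$ is a degree $n$ random monic polynomial with integer coefficients in the sense of Definition~\ref{def:monicpoly}, its roots are exactly the eigenvalues of $\mat A_n$, and so an eigenvalue of $\mat A_n$ that is algebraic of degree at most $K$ is precisely a root of $f$ that is algebraic of degree at most $K$. Thus it suffices to bound the probability that $f$ has an algebraic root of degree at most $K$.

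First I would localize the spectrum: by an elementary bound all eigenvalues of $\mat A_n$ have absolute value at most $n$ with probability $1$, so taking $M = n$ and $\Omega := \{z \in \mathbb C : \abs z \le n\}$ makes the term $\Prob(\abs{\lambda_i(f)} > M \text{ for some } i)$ in \eqref{eq:sumprobbnd} vanish. Next I would verify \eqref{eq:zerobnd} on this $\Omega$: in $G(n,p)$ distinct edges are independent, so the entries in the strict upper triangle of the real symmetric matrix $\mat A_n$ are independent, which is exactly the hypothesis of Theorem~\ref{thm:Nsym}. That theorem then gives, for any constant $B' > 0$ and all $n$ large (depending only on $p$ and $B'$), $\sup_{z \in \Omega} \Prob(f(z) = 0) = \sup_{z \in \Omega} \Prob(z \text{ is an eigenvalue of } \mat A_n) \le n^{-B'}$, so we may take $p = n^{-B'}$ in \eqref{eq:zerobnd}.

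Finally I would invoke Theorem~\ref{thm:main} with this $M$, $\Omega$, $p$, and $k = K$, together with Lemma~\ref{lem:collected-bounds}(iv) (with $m = 1$): the latter supplies a constant $B'$, depending only on $B$, $m = 1$, and $K$, for which $p(eM)^{k^2} = n^{-B'}(en)^{K^2} \le n^{-B}$ for $n$ large, and for $K = 1$ the analogous estimate $p \cdot 3M = 3 n^{1-B'} \le n^{-B}$ holds just as easily. Choosing $B'$ larger than both the threshold demanded by Theorem~\ref{thm:Nsym} and the one demanded here, Theorem~\ref{thm:main} yields that the probability $f$ has an algebraic root of degree at most $K$ in $\Omega$ is at most $n^{-B} + 0 = n^{-B}$ for all sufficiently large $n$; since with probability $1$ every eigenvalue of $\mat A_n$ lies in $\Omega$, this is exactly the claimed bound, with the threshold on $n$ depending only on $B$, $K$, and $p$. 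The argument has no essential obstacle — it is bookkeeping combining the three cited results — and the only points needing attention are checking the independence hypothesis of Theorem~\ref{thm:Nsym}, treating the degenerate case $K = 1$ where Theorem~\ref{thm:main} uses $p \cdot 3M$ in place of $p(eM)^{k^2}$, and tracking the quantifier on $B'$ so that the final ``$n$ sufficiently large'' depends only on $B$, $K$, and $p$.
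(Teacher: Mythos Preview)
Your proposal is correct and follows exactly the paper's approach: the paper states that Theorem~\ref{thm:ER} follows by running the proof of Theorem~\ref{thm:elliptical} with Theorem~\ref{thm:Nsym} in place of Theorem~\ref{thm:NO}, which is precisely what you do (take $M=n$, $\Omega=\{|z|\le n\}$, get $p=n^{-B'}$ from Theorem~\ref{thm:Nsym}, then apply Theorem~\ref{thm:main} and Lemma~\ref{lem:collected-bounds}(iv) with $m=1$). Your added care with the $K=1$ case and the dependence of the threshold on $B$, $K$, $p$ only makes the write-up more complete.
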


Note that Vershynin's result \cite[Theorem~1.2]{Vsym} (and also the special case stated in Theorem~\ref{thm:Vsym}) is not applicable here because then entries of the adjacency matrix do not have zero mean.  It would be interesting to see if \cite[Theorem~1.2]{Vsym} could be extended to the case where the entries had non-zero mean; such a result would directly improve the bound in Theorem~\ref{thm:ER} above, likely giving a result analogous to Theorem~\ref{thm:realsym}.

\section{Proof of Theorem~\ref{thm:main} and Lemma~\ref{lem:collected-bounds}} \label{sec:main}

We prove Theorem~\ref{thm:main} first and prove Lemma~\ref{lem:collected-bounds} at the end of this section.
We prove Theorem~\ref{thm:main} via a series of lemmata.  Some of the results in this section can also be found in the text \cite{DF} by Dummit and Foote; we provide proofs in certain cases for completeness.  

\begin{lemma} \label{lemma:minpoly}
Let $f$ be a polynomial with rational coefficients.  If $\lambda$ is a root of $f$, then the minimal polynomial of $\lambda$ divides $f$ over the rationals.  
\end{lemma}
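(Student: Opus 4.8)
The plan is to invoke the division algorithm in the polynomial ring $\mathbb{Q}[x]$. First I would observe that since $\lambda$ is a root of a polynomial with rational coefficients, it is an algebraic number, so its minimal polynomial $m$ is well-defined: recall $m$ is the monic polynomial in $\mathbb{Q}[x]$ of least degree with $m(\lambda)=0$. (If $f$ is the zero polynomial there is nothing to prove, so I would assume $f\neq 0$; note also that the very statement presupposes the existence of a minimal polynomial, which is exactly the assertion that $\lambda$ is algebraic.)

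Next I would divide $f$ by $m$. Because $\mathbb{Q}$ is a field, the division algorithm produces $q,r\in\mathbb{Q}[x]$ with $f=qm+r$ and either $r=0$ or $\deg r<\deg m$. Evaluating this identity at $x=\lambda$ and using $m(\lambda)=0$ and $f(\lambda)=0$ gives $r(\lambda)=0$.

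The key step is to conclude that $r$ must be the zero polynomial. Indeed, if $r\neq 0$, then rescaling $r$ by the inverse of its leading coefficient yields a monic polynomial in $\mathbb{Q}[x]$ of degree $\deg r<\deg m$ that still has $\lambda$ as a root, contradicting the minimality of $m$. Hence $r=0$, so $f=qm$, which is precisely the statement that $m$ divides $f$ over the rationals.

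I expect no genuine obstacle here: the argument is the standard textbook proof and amounts to one application of polynomial division. The only points requiring a little care are the bookkeeping around the degenerate case $f=0$ and making explicit that being a root of a nonzero polynomial over $\mathbb{Q}$ is what guarantees $\lambda$ is algebraic and hence that the minimal polynomial exists in the first place.
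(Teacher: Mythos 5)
Your proof is correct and follows essentially the same approach as the paper: apply the division algorithm in $\mathbb{Q}[x]$, evaluate the remainder at $\lambda$, and invoke minimality of the minimal polynomial to conclude the remainder vanishes. The only difference is your extra bookkeeping about the degenerate case $f=0$ and the existence of the minimal polynomial, which the paper leaves implicit.
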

\begin{proof}
Let $g$ denote the minimum polynomial of $\lambda$.  By definition of the minimum polynomial, this implies that $\deg(g) \leq \deg(f)$.  Hence, by the division algorithm,
$f(z) = h(z) g(z) + r(z),$
where $h$ and $r$ are polynomials with rational coefficients and $\deg(r) < \deg(g)$.  
Since $\lambda$ is a root of both $f$ and $g$, we have that $\lambda$ is a root of $r$.  However, since $\deg(r) < \deg(g)$ and $g$ is the minimum polynomial of $\lambda$, we must have that $r(z)=0$.  
\end{proof}

For the proof of the next lemma, we will need Gauss's lemma.

\begin{theorem}[Gauss's lemma; Proposition 5 on page 303 of \cite{DF}] \label{thm:gauss}
Let $f$ be a nonconstant polynomial with integer coefficients.  If $f$ is irreducible over the integers, then $f$ is irreducible over the rationals.  
\end{theorem}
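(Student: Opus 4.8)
The plan is to derive this from the multiplicative form of Gauss's lemma, namely that the product of two \emph{primitive} integer polynomials (those whose coefficients have greatest common divisor one) is again primitive. Granting that, the remainder is bookkeeping with contents and units.

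First I would observe that the hypothesis already forces $f$ to be primitive: if the greatest common divisor $d$ of its coefficients were $>1$, then $f = d\cdot(f/d)$ would express $f$ as a product of the non-unit constant $d \in \mathbb{Z}[x]$ and the non-unit nonconstant polynomial $f/d \in \mathbb{Z}[x]$, contradicting irreducibility over $\mathbb{Z}$.

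Next I would prove the multiplicativity claim by reduction modulo a prime. If $g,h \in \mathbb{Z}[x]$ are primitive but $gh$ is not, pick a prime $p$ dividing every coefficient of $gh$ and reduce coefficientwise into $\mathbb{F}_p[x]$; then $\bar g\,\bar h = \overline{gh} = 0$, and since $\mathbb{F}_p[x]$ is an integral domain we get $\bar g = 0$ or $\bar h = 0$, i.e.\ $p$ divides all coefficients of $g$ or of $h$, contradicting primitivity. This is the genuine arithmetic content of the lemma, and the only place I expect any real difficulty; everything else is formal.

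Finally, suppose toward a contradiction that $f = GH$ with $G,H \in \mathbb{Q}[x]$ both nonconstant. Clearing denominators, write $G = \frac{a_1}{b_1}g$ and $H = \frac{a_2}{b_2}h$ with $g,h \in \mathbb{Z}[x]$ primitive and $\frac{a_i}{b_i} \in \mathbb{Q}$, noting $\deg g = \deg G \ge 1$ and $\deg h = \deg H \ge 1$. Then $b_1 b_2 f = a_1 a_2\, g h$, and comparing the greatest common divisor of the coefficients on each side — using that $f$ is primitive and that $gh$ is primitive by the previous step — gives $|a_1 a_2| = |b_1 b_2|$, so $f = \pm g h$. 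This exhibits $f$ as a product of two nonconstant integer polynomials, contradicting irreducibility over $\mathbb{Z}$; hence $f$ is irreducible over $\mathbb{Q}$. (Alternatively, one may simply cite this as Proposition 5 on page 303 of \cite{DF}, as the statement already indicates.)
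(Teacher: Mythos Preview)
Your proof is correct and follows the standard textbook argument; note, however, that the paper does not actually prove this statement --- it is stated without proof and attributed to Dummit and Foote \cite{DF}, so there is no ``paper's own proof'' to compare against. Your argument is essentially the one found in that reference, and the parenthetical remark at the end of your proposal (simply citing \cite{DF}) is precisely what the paper does.
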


\begin{lemma} \label{lemma:alginteger}
Let $f$ be a monic polynomial with integer coefficients.  If $\lambda$ is a root of $f$ with minimal polynomial $g$, then $g$ is a monic polynomial with integer coefficients and $\lambda$ is an algebraic integer.  
\end{lemma}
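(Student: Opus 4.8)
The plan is to show that the minimal polynomial $g$ of $\lambda$, which a priori only has rational coefficients, in fact has integer coefficients; once this is done, $\lambda$ is by definition an algebraic integer since it is a root of a monic integer polynomial. First I would use Lemma~\ref{lemma:minpoly} to write $f = g h$ over the rationals, where $h$ is the quotient from the division algorithm (the remainder vanishes because $g$ is the minimal polynomial). Note $g$ is monic by definition of the minimal polynomial, and then $h$ is forced to be monic as well, since $f$ is monic and the leading coefficient of a product is the product of the leading coefficients.

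Next I would clear denominators: choose positive integers $a, b$ so that $\tilde g := a g$ and $\tilde h := b h$ have integer coefficients, and moreover choose $a, b$ minimal, so that $\tilde g$ and $\tilde h$ are each primitive (the gcd of the coefficients is $1$). Then $ab\, f = \tilde g \tilde h$. The key tool is the multiplicative form of Gauss's lemma: the product of two primitive polynomials is primitive. Hence $\tilde g \tilde h$ is primitive, so the content of $ab\, f$ equals $ab$ times the content of $f$; but the content of $ab\,f = \tilde g\tilde h$ is $1$, and since $f$ has integer coefficients its content is a positive integer, forcing $ab = 1$ and hence $a = b = 1$. Therefore $g = \tilde g$ already has integer coefficients, and being monic with integer coefficients, it exhibits $\lambda$ as an algebraic integer.

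The main obstacle is that the version of Gauss's lemma quoted in the excerpt as Theorem~\ref{thm:gauss} is the statement ``irreducible over $\mathbb{Z}$ implies irreducible over $\mathbb{Q}$,'' not the multiplicative ``product of primitives is primitive'' form I want to invoke. So I would either (a) cite the standard content-multiplicativity statement from \cite{DF} directly (it appears in the same section), or (b) give the short self-contained argument: if a prime $\pi$ divided every coefficient of $\tilde g \tilde h$, then reducing mod $\pi$ would give $\bar{\tilde g}\,\bar{\tilde h} = 0$ in $\mathbb{F}_\pi[x]$, an integral domain, so one of $\bar{\tilde g}, \bar{\tilde h}$ is zero, contradicting primitivity of $\tilde g$ or $\tilde h$. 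Route (b) keeps the proof self-contained in the spirit of the surrounding lemmas; I would likely include it in a sentence or two. A minor point to handle cleanly: the division algorithm in Lemma~\ref{lemma:minpoly} is applied with $g$ as divisor precisely because $g$ is monic (so division works over $\mathbb{Z}$ too, but we only need it over $\mathbb{Q}$), and one should note $h$ is nonzero whenever $\deg f \geq \deg g$, which holds since $\lambda$ is a root of $f$.
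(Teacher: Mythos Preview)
Your proof is correct, but it takes a different route from the paper's. The paper instead factors $f$ into irreducibles over $\mathbb{Z}$, say $f = f_1 \cdots f_\ell$; each $f_j$ is then monic (since $f$ is), and $\lambda$ is a root of some $f_1$. By Lemma~\ref{lemma:minpoly}, $g$ divides $f_1$ over $\mathbb{Q}$, and now Theorem~\ref{thm:gauss} applies \emph{as stated}: $f_1$ is irreducible over $\mathbb{Z}$, hence irreducible over $\mathbb{Q}$, so (both being monic) $g = f_1$, which has integer coefficients.

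The trade-off is exactly the one you identified. The paper's argument is engineered to use the ``irreducible over $\mathbb{Z}$ implies irreducible over $\mathbb{Q}$'' formulation of Gauss's lemma that was quoted as Theorem~\ref{thm:gauss}, at the cost of first invoking a factorization of $f$ into irreducibles over $\mathbb{Z}$. Your content/primitivity argument avoids that factorization and is more direct, but needs the multiplicative form of Gauss's lemma (product of primitives is primitive), which is not the form stated in the paper; your suggested one-line mod-$\pi$ proof handles that cleanly. Either approach is perfectly standard; the paper's has the minor expository advantage of matching the cited theorem exactly.
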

\begin{proof}
We begin by factoring $f$ over the integers into irreducible polynomials $f_j$ for $1 \leq j \leq \ell$:
$$ f(z) = \prod_{j=1}^\ell f_j(z). $$
It must be the case that each $f_j$ is monic.  Additionally, $\lambda$ must be a root of one of the $f_j$'s; without loss of generality, assume $\lambda$ is a root of $f_1$.  By Lemma \ref{lemma:minpoly}, $g$ divides $f_1$ over the rationals.  However, from Gauss's lemma (Theorem \ref{thm:gauss}), this implies (since $f_1$ is monic) that $g=f_1$.  We conclude that $g$ is a monic polynomial with integer coefficients, and by definition it follows that $\lambda$ is an algebraic integer.  
\end{proof}

Lemma \ref{lemma:counting} below is the main lemma we will need to prove Theorem~\ref{thm:main}.  Roughly speaking, Lemma \ref{lemma:counting} says that if $f$ is a monic polynomial with integer coefficients and bounded roots, then there are only a limited number of points in $\mathbb{C}$ that can be roots for $f$ that are algebraic with low degree.  

\begin{lemma}[Counting bound] \label{lemma:counting}
Let $M > 0$ and $1 \leq k \leq n$.  Then there exists a set $S \subset \mathbb{C}$ (depending only on $M$ and $k$) of algebraic integers with cardinality 
$$ |S| \leq k \prod_{j=1}^k \left(2 \binom{k}{j} M^j + 1 \right) $$
such that the following holds.  If $f$ is a monic polynomial of degree $n$ with integer coefficients whose roots are bounded in magnitude by $M$ and $\lambda$ is an root of $f$ that is algebraic of degree $k$, then $\lambda \in S$.  
\end{lemma}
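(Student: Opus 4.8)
The plan is to parametrize all possible minimal polynomials of a degree-$k$ algebraic root of $f$, and then take $S$ to be the (finite) set of all their roots. Suppose $\lambda$ is a root of $f$ that is algebraic of degree $k$, and let $g$ be its minimal polynomial. Since $f$ is monic with integer coefficients, Lemma~\ref{lemma:alginteger} tells us $g$ is monic with integer coefficients, so write $g(z) = z^k + a_{k-1}z^{k-1} + \cdots + a_1 z + a_0$ with each $a_j \in \mathbb{Z}$. By Lemma~\ref{lemma:minpoly}, $g$ divides $f$ over the rationals, so every root of $g$ is a root of $f$, hence every root of $g$ has magnitude at most $M$.

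The key step is to bound the number of possibilities for each coefficient $a_j$. Writing $a_{k-j}$ as $\pm$ the $j$-th elementary symmetric polynomial in the $k$ roots of $g$ (all of magnitude $\le M$), the triangle inequality gives $|a_{k-j}| \le \binom{k}{j} M^j$ for $1 \le j \le k$. Since $a_{k-j}$ is an integer, it can take at most $2\binom{k}{j}M^j + 1$ values. Therefore the number of admissible monic integer polynomials $g$ is at most $\prod_{j=1}^k \left(2\binom{k}{j}M^j + 1\right)$, and each such $g$ contributes at most $k$ roots. Let $S$ be the union of the root sets of all these polynomials; then $|S| \le k \prod_{j=1}^k \left(2\binom{k}{j}M^j + 1\right)$, and $S$ consists of algebraic integers (being roots of monic integer polynomials). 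By construction $\lambda \in S$, and $S$ depends only on $M$ and $k$. This completes the proof.

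I do not anticipate a serious obstacle; the argument is a direct counting bound. The only points requiring a little care are (i) invoking Lemma~\ref{lemma:alginteger} to guarantee that $g$ is a \emph{monic integer} polynomial (so that the coefficients are integers and the root set consists of algebraic integers), and (ii) invoking Lemma~\ref{lemma:minpoly} to ensure that \emph{all} roots of $g$—not just $\lambda$—inherit the magnitude bound $M$ from being roots of $f$, which is what makes the symmetric-function estimate on the coefficients valid. Everything else is the elementary observation that an integer constrained to a bounded interval has boundedly many values.
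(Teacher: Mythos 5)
Your proof is correct and follows essentially the same argument as the paper: parametrize the admissible monic integer minimal polynomials by bounding each coefficient via elementary symmetric functions of the roots (each bounded by $M$), invoking Lemmas~\ref{lemma:minpoly} and~\ref{lemma:alginteger} at exactly the same junctures, and take $S$ to be the union of their root sets.
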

\begin{proof}
For each $c_0, \ldots, c_{k-1} \in \mathbb{Z}$, we define the monic polynomial with coefficients $c_0, \ldots, c_{k-1}$ as 
$$ h_{c_0, \ldots, c_{k-1}}(z) := z^k + \sum_{j=1}^k c_{k-j} z^{k-j}. $$
Each such polynomial is a monic polynomial with integer coefficients, and hence the roots of any such polynomial are always algebraic integers.  Define the index set 
$$ T:= \left\{ (c_0, \ldots, c_{k-1}) \in \mathbb{Z}^k : |c_{k-j}| \leq \binom{k}{j} M^j \text{ for } j = 1, \ldots, k \right\}. $$
By construction, 
$$ |T| \leq \prod_{j=1}^k \left( 2 \binom{k}{j} M^j + 1 \right). $$
We now define the set $S$  as the collection of roots of all polynomials $h_{c_0, \ldots, c_{k-1}}$ whose coefficients $(c_0, \ldots, c_{k-1}) \in T$.  In other words, recalling \eqref{def:Lambda}, 
$$S:= \bigcup_{(c_0, \ldots, c_{k-1}) \in T} \Lambda(h_{c_0, \ldots, c_{k-1}}).$$ Since each polynomial $h_{c_0, \ldots, c_{k-1}}$ has at most $k$ distinct roots, it follows that 

\begin{equation}\label{eq:Sbound}
|S| \leq k |T| \leq k \prod_{j=1}^k \left( 2 \binom{k}{j} M^j + 1 \right). 
\end{equation}

We now claim that $S$ satisfies the conclusion of the lemma.  Indeed, let $f$ be a monic polynomial of degree $n$ with integer coefficients whose roots are bounded in magnitude by $M$.  Let $\lambda_1, \ldots, \lambda_n$ be the roots of $f$, and suppose $\lambda_1$ is an algebraic root of degree $k$.  It follows from Lemmas \ref{lemma:minpoly} and \ref{lemma:alginteger}, that the minimal polynomial of $\lambda_1$, say $g$, is a monic polynomial with integer coefficients which divides $f$.  This implies that the roots of $g$ are also roots of $f$.  (Clearly, $\lambda_1$ is a root of both $f$ and $g$.)  Without loss of generality, assume $\lambda_1, \ldots, \lambda_k$ are the roots of $g$.  Then
$$ g(z) = (z - \lambda_1) \cdots (z - \lambda_k) = z^k + \sum_{j=1}^k d_{k-j} z^{k-j}, $$
where $d_{k-j} := (-1)^j \sum_{1 \leq i_1 < \cdots < i_j \leq k} \lambda_{i_1} \cdots \lambda_{i_j}. $
As noted above, each $d_{k-j} \in \mathbb{Z}$.  In addition, since each root of $f$ is bounded in magnitude by $M$, it follows that
$ |d_{k-j} | \leq \binom{k}{j} M^j. $
This implies that $(d_0, \ldots, d_{k-1}) \in T$.  Therefore, we conclude that the roots of $g$ are contained in $S$.  
\end{proof}

With Lemma \ref{lemma:counting} in hand, we are now ready to prove Theorem~\ref{thm:main}.  The main idea is  simple: If $f$ does have an algebraic root of degree $k$, then Lemma \ref{lemma:counting} shows it must be contained in the set $S$, which has small cardinality.  We can then show that each of the points in $S$ is unlikely to be a root of $f$ using the bound in \eqref{eq:zerobnd}.  

\begin{proof}[Proof of Theorem~\ref{thm:main}]
Let $f$ be a random monic polynomial with integer coefficients.  
Let $S \subset \mathbb{C}$ be the set of algebraic integers from Lemma \ref{lemma:counting}.  In particular, $S$ is a deterministic set which only depends on $M$ and $k$, and $S$ has cardinality 
\begin{equation} \label{eq:Scard}
	|S| \leq k \prod_{j=1}^k \left( 2 \binom{k}{j} M^j + 1 \right). 
\end{equation}
Let $\mathcal{B}_{f,M}$ be the event that all roots $z$ of $f$ satisfy $\abs z \le M$. If $f$ has an algebraic root of degree $k$ in $\Omega$ and $\mathcal{B}_{f,M}$ holds, then Lemma \ref{lemma:counting} implies that this root must be in $S \cap \Omega$.  Hence, by the union bound, we obtain 
\begin{align*}
	\Prob &\left( f \text{ has an algebraic root of degree } k \text{ in } \Omega \right) \\
	&\quad\leq \Prob \left(\{ \mbox{there exists } w \in S \cap \Omega \mbox{ such that } f(w) = 0\} \cap \mathcal{B}_{f,M} \right) + \Prob(\overline{\mathcal{B}_{f,M}})\\
	&\quad\leq \left(\sum_{w \in S \cap \Omega} \Prob \left( f(w) = 0 \right)\right) + \Prob(\overline{\mathcal{B}_{f,M}}) \\
&\quad\leq p |S| + \Prob(\overline{\mathcal{B}_{f,M}}).
\end{align*}
The conclusion now follows from the cardinality bound given in \eqref{eq:Scard} combined with Lemma~\ref{prop:bounds} (based on Stirling's approximation) below.
\end{proof}

\begin{lemma}[Some useful bounds] \label{prop:bounds}
For $M \geq 1$ and $k \ge 2$, 
\begin{align} \label{eq:prodbnd}
M^{(k^2+k)/2}e^{(k^2 -k\log(k))/2} \leq	\prod_{j=1}^k \left( 2 \binom{k}{j} M^j + 1 \right) &\leq (eM)^{(k^2+k)/2}\\
 \label{eq:sumbnd}
\mbox{and} \qquad\qquad	\sum_{l=1}^k l \prod_{j=1}^l \left( 2 \binom{l}{j} M^j + 1 \right) &\le (eM)^{k^2}. 
\end{align}
If $k=1$, the upper bound of $3M$ holds in \eqref{eq:prodbnd} and \eqref{eq:sumbnd}.
\end{lemma}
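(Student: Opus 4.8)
The plan is to reduce all four assertions to two clean estimates on the single number
\[
a_k := \prod_{j=1}^{k}\binom{k}{j},
\]
namely $3^{k}a_k \le e^{k(k+1)/2}$ and $2^{k}a_k \ge e^{(k^{2}-k\log k)/2}$ for $k\ge 2$, and to prove these by induction using the multiplicative recursion
\[
a_{k+1}=\frac{(k+1)^{k}}{k!}\,a_k ,
\]
which follows from $\binom{k+1}{j}=\tfrac{k+1}{k+1-j}\binom{k}{j}$, from $\prod_{j=1}^{k}(k+1-j)=k!$, and from $\binom{k+1}{k+1}=1$. The whole argument turns on replacing the unwieldy two–sided control of $\prod_{j}\bigl(2\binom{k}{j}M^{j}+1\bigr)$ by estimates of $a_k$, for which this recursion does all the work.

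For the upper bound in \eqref{eq:prodbnd}, the key observation is that $\prod_{j=1}^{k}\bigl(2\binom{k}{j}M^{j}+1\bigr)$ is a polynomial in $M$ with nonnegative coefficients and degree $\sum_{j=1}^{k}j=k(k+1)/2$, so for $M\ge 1$ it is at most $M^{k(k+1)/2}$ times its value at $M=1$; together with $2\binom{k}{j}+1\le 3\binom{k}{j}$ (valid since $\binom{k}{j}\ge 1$) this gives
\[
\prod_{j=1}^{k}\Bigl(2\binom{k}{j}M^{j}+1\Bigr)\le M^{k(k+1)/2}\prod_{j=1}^{k}\Bigl(2\binom{k}{j}+1\Bigr)\le M^{k(k+1)/2}\,3^{k}a_k .
\]
It remains to show $3^{k}a_k\le e^{k(k+1)/2}$. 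Set $b_k:=3^{k}a_k\,e^{-k(k+1)/2}$; the recursion gives $b_{k+1}/b_k=3(k+1)^{k}/(k!\,e^{k+1})$, and the Stirling bound $m!\ge\sqrt{2\pi m}\,(m/e)^{m}$ applied with $m=k+1$ yields $(k+1)^{k}/k!\le e^{k+1}/\sqrt{2\pi(k+1)}$, whence $b_{k+1}/b_k\le 3/\sqrt{2\pi(k+1)}<1$ for all $k\ge 1$. Since $b_2=18e^{-3}<1$ by direct computation, induction gives $b_k\le b_2<1$ for all $k\ge 2$, which is the upper bound (note $(eM)^{(k^{2}+k)/2}=M^{k(k+1)/2}e^{k(k+1)/2}$).

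For the lower bound in \eqref{eq:prodbnd} one simply discards the $+1$:
\[
\prod_{j=1}^{k}\Bigl(2\binom{k}{j}M^{j}+1\Bigr)\ \ge\ \prod_{j=1}^{k}2\binom{k}{j}M^{j}\ =\ 2^{k}a_k\,M^{k(k+1)/2},
\]
so it suffices to prove $2^{k}a_k\ge e^{k^{2}/2}k^{-k/2}=e^{(k^{2}-k\log k)/2}$. With $c_k:=2^{k}a_k\,e^{-k^{2}/2}k^{k/2}$, the recursion, the Stirling bound $m!\le\sqrt{2\pi m}\,(m/e)^{m}e^{1/(12m)}$, and $(1+1/k)^{k}\ge 2$ combine to give $c_{k+1}/c_k\ge \tfrac{2}{\sqrt{\pi}}\,e^{1/2-1/24}>1$ for all $k\ge 1$; since $c_1=2/\sqrt{e}>1$, induction yields $c_k>1$ for all $k$.

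Finally, \eqref{eq:sumbnd} and the case $k=1$. For $l\ge 2$ the product bound just proved gives $\prod_{j=1}^{l}\bigl(2\binom{l}{j}M^{j}+1\bigr)\le (eM)^{l(l+1)/2}$, while for $l=1$ we have $2M+1\le 3M$, so
\[
\sum_{l=1}^{k}l\prod_{j=1}^{l}\Bigl(2\binom{l}{j}M^{j}+1\Bigr)\le 3M+\sum_{l=2}^{k}l\,(eM)^{l(l+1)/2}.
\]
In the last sum consecutive terms have ratio $\tfrac{l+1}{l}(eM)^{l+1}\ge e^{3}$, so it is at most $\tfrac{e^{3}}{e^{3}-1}$ times its largest term $k(eM)^{k(k+1)/2}$; using $3M\le (eM)^{k(k+1)/2}$ for $k\ge 2$, the whole expression is bounded by $3k\,(eM)^{k(k+1)/2}$. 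Since $(eM)^{k^{2}-k(k+1)/2}=(eM)^{k(k-1)/2}\ge e^{k(k-1)/2}\ge 3k$ for $k\ge 3$, this gives \eqref{eq:sumbnd} for $k\ge 3$; the case $k=2$ (inner sum is the single term $2(eM)^{3}$, and $3M+2(eM)^{3}\le (eM)^{4}$ for $M\ge 1$) and $k=1$ (sum equals $2M+1\le 3M$) are immediate, and the two $k=1$ claims of the lemma are just $2M+1\le 3M$. No step here is deep; the main obstacle is purely one of finding the right reduction, since crude term-by-term bounds (e.g.\ $\binom{k}{j}\le 2^{k}$, or even the entropy bound $\binom{k}{j}\le k^{k}/(j^{j}(k-j)^{k-j})$ with an Euler–Maclaurin estimate) miss the target constants by a factor that is superpolynomial or at least exponential in $k$, whereas the exact recursion for $a_k$ makes the constants $e$ and $3$ come out precisely, at the cost of checking the base case $k=2$ by hand.
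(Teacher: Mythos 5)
Your proof is correct, and it takes a genuinely different route from the paper's. The paper's proof is a single direct computation: it replaces $2\binom{k}{j}M^j+1$ by $3\binom{k}{j}M^j$ (resp.\ discards the $+1$), rewrites $\prod_{j=1}^k\binom kj = \prod_j j^j/\prod_j j!$, and applies the Stirling lower bound $j!\ge\sqrt{2\pi j}(j/e)^j$ termwise (resp.\ an upper Stirling bound) to get the exponent $k(k+1)/2 + k(\tfrac12+\log(3/\sqrt{2\pi})-\tfrac12\log k)$, whose coefficient of $k$ becomes negative only at $k\ge4$, forcing explicit verification of $k=1,2,3$. Your proof instead isolates $a_k:=\prod_j\binom kj$, establishes the clean recursion $a_{k+1}=(k+1)^k a_k/k!$, and shows that the normalized quantities $b_k:=3^k a_k e^{-k(k+1)/2}$ and $c_k:=2^k a_k e^{-k^2/2}k^{k/2}$ are respectively decreasing and increasing by a single Stirling estimate per step; a single base case ($b_2<1$, $c_1>1$) then closes the induction. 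This buys a tidier argument in which only $k=2$ must be checked by hand (the paper checks $k=1,2,3$), and the monotonicity of $b_k$ and $c_k$ is transparent. For \eqref{eq:sumbnd} you also deviate: the paper bounds $\sum_{l=1}^k l\cdot(\cdot)\le\tfrac{k^2+k}{2}(eM)^{(k^2+k)/2}$ crudely and then argues $\tfrac{k^2+k}{2}\le(eM)^{(k^2-k)/2}$ for $k\ge3$, while you observe the terms grow by a factor $\ge e^3$ and sum a geometric series dominated by its largest term; both handle $k=2$ by the same direct inequality $3M+2(eM)^3\le(eM)^4$. One small remark: the explicit constant $\tfrac{2}{\sqrt\pi}e^{1/2-1/24}$ you quote for $c_{k+1}/c_k$ does not appear to follow from the stated estimates; carrying out the computation with $(1+1/k)^{3k/2}\ge2\sqrt2$, $\sqrt{(k+1)/(2\pi k)}\ge1/\sqrt{2\pi}$, and $e^{-1/(12k)}\ge e^{-1/12}$ gives the weaker but still sufficient bound $c_{k+1}/c_k\ge\tfrac{4}{\sqrt\pi}e^{-7/12}\approx1.26>1$, so the argument goes through; only the displayed numerical constant should be corrected.
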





\begin{proof}[Proof of Lemma \ref{prop:bounds}]
To prove the upper bound in \eqref{eq:prodbnd}, we note that 
\begin{align*}
	\prod_{j=1}^k \left( 2 \binom{k}{j} M^j + 1 \right) \leq \prod_{j=1}^k 3 \binom{k}{j} M^j = 3^k M^{k(k+1)/2} \prod_{j=1}^k \binom{k}{j}.
\end{align*}
Using falling factorial notation $(k)_j:=k(k-1)\cdots(k-j+1)$, we compute
\begin{align*}
3^k\prod_{j=1}^k \binom{k}{j} 
&= 3^k \frac{\prod_{j=1}^k j^j}{\prod_{j=1}^k j!} 
\le 3^k \frac{\prod_{j=1}^k j^j}{\prod_{j=1}^k\sqrt{2\pi j} (j/e)^j} 
= \left(\frac{3}{\sqrt{2\pi}}\right)^k \frac{\exp(k(k+1)/2)}{\sqrt{k!}}
\\ 
&\le\left(\frac{3}{\sqrt{2\pi}}\right)^k \frac{\exp(k(k+1)/2+k/2)}{(2\pi k)^{1/4}k^{k/2}}
\\ 
&= \exp\left[k(k+1)/2 + k/2 + k \log\left(3/\sqrt{2\pi}\right)- \frac k 2 \log k - \frac14\log(2\pi k)\right]\\
&\le \exp\left[k(k+1)/2 + k\Big( 1/2 +  \log\left(3/\sqrt{2\pi}\right)-  \frac 12 \log k\Big)\right].
\end{align*}
Note that the first and second inequalities above come from Stirling's approximation:
\begin{equation} \label{eq:stirling}
	j! \ge \sqrt{2\pi j} \left(\frac je\right)^j.
\end{equation}
It is easy to see that $\Big( 1/2 +  \log(3/\sqrt{2\pi})-  \frac12\log k\Big)=\log\left(\frac{3\sqrt e}{\sqrt{k2\pi}}\right)$ becomes negative for $k \ge 4$, proving the upper bound for $k\ge 4$.  For the $k=1,2,3$ cases, one can explicitly expand $\prod_{j=1}^k \left( 2 \binom{k}{j} M^j + 1 \right)$ and use $M\ge 1$ to verify that it is at most $3M$ when $k=1$ and is less than $(eM)^{(k^2+k)/2}$ when $k=2$ or $3$. 
This completes the proof of the upper bound in \eqref{eq:prodbnd}.

To show the upper bound in \eqref{eq:sumbnd}, we note that $\sum_{\ell=1}^k \ell = \frac{k^2+k}{2}$, and, for $k \ge 3$, we have $\pfrac{k^2+k}{2}(eM)^{(k^2+k)/2} \le (eM)^{k^2}$ by elementary calculus (note the function $e^{(k^2-k)/2} -  \frac{k^2+k}{2}$ is positive and increasing for all $k \ge 3$). Finally, for $k=2$, one can check that $3M+2(eM)^3 \le (eM)^4$ for any $M \ge 1$, thus proving \eqref{eq:sumbnd}.

To show the lower bound in \eqref{eq:prodbnd}, we use the same approach as for the upper bound, noting that
\begin{align*}
	\prod_{j=1}^k \left( 2 \binom{k}{j} M^j + 1 \right) \geq \prod_{j=1}^k 2 \binom{k}{j} M^j = 2^k M^{k(k+1)/2} \prod_{j=1}^k \binom{k}{j}.
\end{align*}
To bound $2^k M^{k(k+1)/2} \prod_{j=1}^k \binom{k}{j}$, we use Stirling's approximation $\displaystyle j! \le e \sqrt j \left(\frac je\right)^j$ in each place where we used \eqref{eq:stirling} in the upper bound proof above, eventually arriving at
\begin{align*}
2^k 
\prod_{j=1}^k \binom{k}{j} 
&=2^k \prod_{j=1}^k \frac{(k)_j}{j!} 
= 2^k \frac{\prod_{j=1}^k j^j}{\prod_{j=1}^k j!} \\
&\ge \exp\left[ \frac{k^2}{2} + k - \frac12 - \frac14 \log k -k\log\left(\frac{e}{2}\right) - \frac k 2 \log k \right]\\
&\ge \exp\left[ \frac{k^2}{2} - \frac k2 \log k\right],
\end{align*}
where the last inequality holds since $k - \frac12 - \frac14 \log k -k\log\left(\frac{e}{2}\right)$ is positive for all $k \ge 1$.  This completes the proof of the lower bound.

One can see from the proof that $ \frac{k^2}{2} - \frac k2 \log k + o(k \log k)$ is the correct order for the exponent on $e$ in \eqref{eq:prodbnd}.
%
%
\end{proof}

\begin{proof}[Proof of Lemma~\ref{lem:collected-bounds}]
For (i), note that $O\pfrac{1}{\sqrt n} (2e)^{\frac{\log n}{4}}$ is equal to\\ $O(\exp(-\frac12\log n + \frac{1+\log 2}{4}\log n)) = o(1)$. 

For (ii), note that $(en)^{n^{1-2\epsilon}} = \exp(n(n^{-2\epsilon}(1+\log n))= (1+o(1))^n$.

For (iii), note that 
$2e^{-n^c} (eC\sqrt n)^{n^{2c'}} \le \exp(\log 2+-n^c +n^{2c'}(1+\log (Cn)))$
$= \exp\left(n^c (-1+ \frac{\log 2}{n^c} +n^{2c'-c}(\log n + \log C)\right) \le \exp(n^c(-\frac23))$ for sufficiently large $n$.

Finally, for (iv), given $B$, note that we can choose $B'=B+2mK^2$ and then
$n^{-B'}(en^m)^{K^2} = n^{-B}\exp(-2mK^2 \log n+ K^2(1+m\log n))\le n^{-B}$ for $n$ sufficiently large.
\end{proof}

\section{Proof of Theorem \ref{thm:iidpoly}} \label{sec:iidpoly}

We will show that it is likely that $f_n(z)$ has no irreducible factor of degree $n^{1/3}/\log^3(n)$ or less, following a similar approach to Konyagin~\cite{K01}.  As in \cite{K01}, we bound the probability as a sum of two cases, depending on whether the irreducible factor is cyclotomic or not.  We have optimized the proof in the non-cyclotomic case for the highest possible degree (up to log factors); however, a stronger result can be proved in the cyclotomic case.  
Pointwise delocalization in the non-cyclotomic case will follow from an anti-concentration result proven by Tao and Vu (Lemma~\ref{lemma:loc}) which we discuss before giving the proof of Theorem~\ref{thm:iidpoly}.

Let $Z$ be a complex-valued random variable.  The \emph{L\'{e}vy concentration function} of $Z$ is defined as
$$ \Le(Z,t) := \sup_{u \in \mathbb{C}} \Prob( |Z - u| \leq t ) $$
for all $t \geq 0$.  
The L\'{e}vy concentration function bounds the \emph{small ball probabilities} for $Z$, which are the probabilities that $Z$ is in a ball of radius $t$.  

\begin{lemma}[Tao-Vu, following from Lemma 9.2 in \cite{TVpoly}] \label{lemma:loc}
Let $\xi_1, \ldots, \xi_n$ be iid Rademacher random variables, which take the values $+1$ and $-1$ with equal probability.  Let $x_0, \ldots, x_n$ be complex numbers, and suppose there is a subsequence $x_{i_1}, \ldots, x_{i_m}$ with the property that
$$ |x_{i_j}| \geq 2 |x_{i_{j+1}}| $$
for all $j=1, \ldots, m-1$.  Consider the sum $S:= \sum_{k=0}^n \xi_k x_k$.  Then one has
$$ \Le(S, 0) \leq C \exp(- cm) $$
for some absolute constants $C, c > 0$.  
\end{lemma}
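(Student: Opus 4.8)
The plan is to prove the estimate of Lemma~\ref{lemma:loc} by peeling off the ``lacunary part'' $T := \sum_{j=1}^{m} \xi_{i_j} x_{i_j}$ of $S$ from the remaining coefficients, and then observing that a geometrically decaying sequence of coefficients forces the map from sign patterns to values of $T$ to be injective, so that $\Prob(T = w) \le 2^{-m}$ for every $w$. This in fact yields the clean bound $\Le(S,0) \le 2^{-m}$, i.e.\ the claimed estimate with $C = 1$ and $c = \log 2$. (Alternatively one may simply invoke \cite[Lemma~9.2]{TVpoly} after verifying that iid Rademacher variables satisfy its hypotheses; the argument below is essentially the specialization of that proof to the Rademacher case.)

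First I would separate off the subsequence. Set $I := \{i_1, \ldots, i_m\}$ and write $S = W + T$ with $W := \sum_{k \notin I} \xi_k x_k$ and $T := \sum_{j=1}^{m} \xi_{i_j} x_{i_j}$. Since $W$ and $T$ depend on disjoint subfamilies of the independent variables $\{\xi_k\}$, they are independent, so conditioning on $W$ gives, for every $v \in \mathbb{C}$,
$$ \Prob(S = v) = \E\big[ \Prob(T = v - W \mid W) \big] \le \sup_{w \in \mathbb{C}} \Prob(T = w) = \Le(T, 0), $$
whence $\Le(S, 0) \le \Le(T, 0)$. This is the usual ``restriction'' property of the L\'evy concentration function, and it is what lets us discard every coefficient outside the chosen subsequence.

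The crux is to bound $\Le(T, 0)$. Writing $\varepsilon_j := \xi_{i_j}$ and $y_j := x_{i_j}$, we have $T = \sum_{j=1}^{m} \varepsilon_j y_j$ with $\varepsilon_1, \ldots, \varepsilon_m$ iid Rademacher and $|y_j| \ge 2|y_{j+1}|$. I claim the map $\Phi \colon \{-1,+1\}^m \to \mathbb{C}$, $\Phi(\sigma) := \sum_{j=1}^{m} \sigma_j y_j$, is injective. Geometric decay gives the strict ``dominant term'' bound $|y_j| > \sum_{\ell > j} |y_\ell|$ for every $j$ (a finite geometric sum, using here that each $y_j \ne 0$); so if $\sigma \ne \sigma'$ and $j_0$ is the least index at which they differ, then $\Phi(\sigma) = \Phi(\sigma')$ would force $2|y_{j_0}| = \big| \sum_{\ell > j_0}(\sigma_\ell - \sigma'_\ell) y_\ell \big| \le 2\sum_{\ell > j_0} |y_\ell| < 2|y_{j_0}|$, a contradiction. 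Hence each value in the range of $\Phi$ is attained by a unique sign pattern, so $\Prob(T = w) \le 2^{-m}$ for every $w$, and therefore $\Le(S, 0) \le \Le(T, 0) \le 2^{-m}$.

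I do not expect a genuine obstacle: the step with the most content is the injectivity of $\Phi$, which is a short triangle-inequality argument, and the one real subtlety is the degenerate case in which some $x_{i_j}$ vanishes. That case must be excluded for the statement to hold, so one assumes the $x_{i_j}$ are nonzero; this is implicit in the lacunarity hypothesis (and in \cite[Lemma~9.2]{TVpoly}) and holds automatically in every application in this paper, where one takes $x_k = z^k$ for a nonzero complex number $z$.
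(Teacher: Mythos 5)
Your proof is correct. The paper itself does not supply a proof of this lemma; it states it as a specialization to Rademacher variables of \cite[Lemma~9.2]{TVpoly} and relies on that citation. What you give is a self-contained elementary argument. The first step — discarding the coefficients outside the lacunary subsequence by conditioning, so that $\Le(S,0)\le\Le(T,0)$ — is the standard restriction property of the concentration function and is carried out correctly. The second step is the key observation that a geometrically decaying sequence of nonzero coefficients makes the sign-pattern-to-value map $\Phi$ injective, because the leading term at the first index of disagreement strictly dominates the sum of all later terms (the finite geometric sum gives $\sum_{\ell>j}|y_\ell|\le(1-2^{-(m-j)})|y_j|<|y_j|$). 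This yields the explicit constants $C=1$, $c=\log 2$, which is in fact sharper than the unspecified absolute constants in the statement. By contrast, Tao--Vu's Lemma~9.2 is formulated for a more general class of random variables, so their argument cannot exploit the two-point support; your argument buys a cleaner, fully explicit bound but only for the Rademacher case, which is all this paper needs. You are also right to flag the degenerate case where some $x_{i_j}=0$: the statement is false as written if the lacunary terms are allowed to vanish (take all $x_k=0$), so nonvanishing must be assumed; in the paper's one application $x_k=w^k$ with $|w|>1$, so this never arises.
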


We now have the tools to prove Theorem~\ref{thm:iidpoly}.

\begin{proof}[Proof of Theorem~\ref{thm:iidpoly}]
Let $\xi_0, \xi_1, \ldots$ be iid Rademacher random variables which take the values $+1$ and $-1$ with equal probability, and recall that $f_n(z) := z^n + \xi_{n-1} z^{n-1} + \cdots + \xi_1 z + \xi_0. $ The general approach below follows Konyagin~\cite{K01}.

First, we bound the number of irreducible polynomials $g(z)$ of degree $d$ that can divide $f_n(z)$.  If $g(z)$ divides $f_n(z)$, then all roots of $g(z)$ are roots of $f_n(z)$ and so are algebraic integers with absolute value between $1/2$ and 2 (by Lemma~\ref{lemma:iidpoly122} below).  Also, the set of roots of any given monic irreducible $g(z)$ are disjoint from the set of roots of any other monic irreducible polynomial (by uniqueness of the minimal polynomial and Lemma~\ref{lemma:minpoly}); thus, the number of degree $d$ algebraic integers that can be roots of $f_n(z)$ is an upper bound for the number of possible degree $d$ irreducible polynomials $g(z)$ that can divide $f_n(z)$. Hence, by Lemmas~\ref{lemma:counting} and~\ref{prop:bounds}, we have 
\begin{equation}\label{eqn:numdivisors}
\#\{\mbox{degree $d$ irreducible } g(z) \mbox{ that divide } f_n(z)\} \le  (2e)^{d^2}.
\end{equation}
When $d \le 2$, the total number of possible divisors $g(z)$ is a constant, so applying Remark~\ref{rem:weaker_iidpoly}, the probability that any such polynomial divides $f_n(z)$ is at most $O(1/\sqrt n)$.  Thus, it is sufficient to consider irreducible divisors with degree at least 3, and we assume for the remainder of the proof that a possible divisor $g(z)$ has degree $d \ge 3$.

For the non-cyclotomic case, we will show  sufficient delocalization so that the probability that $f_n(z)$ has an irreducible factor of degree $d$ with $3 \le d \le n^{1/3}/\log^3(n)$ is exponentially small.  
Let $g(z)$ be an abritrary non-cyclotomic irreducible polynomial with degree $d$ where $3 \le d \le n^{1/3}/\log^3(n)$.  
By Lemmas~\ref{lemma:minpoly} and \ref{lemma:alginteger}, we may assume $g$ is monic; also, $g(z)$ divides $f(z)$ if and only if $f(w) = 0$, where $w$ is any root of $g(z)$.  By a result of Dobrowolski~\cite{Dob}, since $g(z)$ is non-cyclotomic, it must have a root $w$ satisfying
$$\abs{w} \ge 1 + \frac{c}{d}\pfrac{\log\log d}{\log d}^3,$$
where $c$ is a positive constant; note that the lower bound strictly exceeds 1 since $d \ge 3$.  We will show that the sequence $1, w, w^2, w^3, \dots, w^n$ contains a subsequence that grows quickly, and then apply Lemma~\ref{lemma:loc}.  Because $d \le n^{1/3}/\log^3 n$, we know that $\abs w \ge 1 + \frac{c}{n^{1/3}}$ and so if we take a minimal integer $b$ satisfying $b \ge \frac{4n^{1/3}}{c}$, we have that  $\abs{w}^{b} \ge (1 + \frac{c}{n^{1/3}})^b \ge \exp\left(\frac{cb}{2n^{1/3}}\right)$
for sufficiently large $n$ 
(since $1+x \ge e^{x/2}$ for all sufficiently small positive $x$)%
, which shows that $\abs{w}^b \ge 2$.  We can now take the subsequence $w^0, w^b, w^{2b}, \dots, w^{\floor{\frac{n}{b}}b}$, noting each term is at least twice the term before in absolute value, and so Lemma~\ref{lemma:loc} implies the delocalization bound  
\begin{align}
\Pr(g(z) \mbox{ divides } f_n(z)) &= \Pr(f_n(w) = 0)\nonumber 
\\ &
 \le C\exp\left(-c\left(\floor{\frac{n}{b}}+1\right)\right) \le C\exp(-cn^{2/3}),\nonumber \label{eqn:pm1deloc}
\end{align} 
where $C$ and $c$ are constants that may change from line to line.  
%
By \eqref{eqn:numdivisors}, there are at most $(2e)^{d^2} \leq \exp\left(\frac{n^{2/3}}{\log^6 n}(1+\log 2)\right)$ possible polynomials $g(z)$ with degree $d$ where $3\le d \le n^{1/3}/\log^3 n$.
Taking a union bound over all possible $g(z)$ with all possible degrees $d$, we see that the probability that $f_n(z)$ has an irreducible non-cyclotomic factor with degree in the range $3\le d \le n^{1/3}/\log^3 n$ is at most $C\frac{n^{1/3}}{\log^3 n} \exp\left(-cn^{2/3}+\frac{n^{2/3}}{\log^6 n}(1+\log 2) \right)$, which is less than $\exp(-c n^{2/3})$ for sufficiently large $n$.

For the cyclotomic case, we will use the union bound over all cyclotomic polynomials of a given degree. Let $E_{\mathrm{cyl}}$ be the event that there exists a cyclotomic polynomial of degree $d$ with $3 \le d \le n^{1/3}$ that divides $f_n(x)$, and let $E_{\mathrm{cyl},d}$ be the event that there exists a cyclotomic polynomial of degree $d$ that divides $f_n(x)$.  By the union bound, we have
$\Pr(E_{\mathrm{cyl}}) \le \sum_{d =3}^{n^{1/3}} \Pr(E_{\mathrm{cyl},d}),$
and so it remains to prove a bound on $\Pr(E_{\mathrm{cyl},d})$.


Recall that the $k$-th cyclotomic polynomial is $\Phi_k(x) = \prod_{\substack{1\le a\le k \\ \gcd(a,k)=1}} \left(x - e^{2\pi i \frac{a}{k}}\right)$.  Assume that $\Phi_k(x)$ has degree $d\ge 3$ (which implies $k \ge 3$) and divides $f_n(x)$.  Then we have $f_n(\alpha)=0$, where $\alpha$ is a root of $\Phi_k(x)$.
Because $\alpha^k=1$, we have that $f_n(\alpha) = \sum_{j=0}^{k-1} A_j \alpha^j$ where $A_j = \sum_{\substack{b \equiv j \pmod k \\ 0\le b \le n}} \xi_b$; note that the $A_j$ are independent.  Because $f_n(\alpha) =0$ and $\alpha$ has algebraic degree $d$, we have $\sum_{j=0}^{d-1} A_j \alpha^j = -\sum_{j=d}^{k-1} A_j \alpha^j = \sum_{j=0}^{d-1} B_j \alpha^j$, for some integers $B_j$ that are functions of $A_{d},\dots, A_{k-1}$, and so the $B_j$ are independent of $A_0,\dots, A_{d-1}$.  Furthermore, because the minimal polynomial for $\alpha$ has degree $d$, we must have $A_j = B_j$ for $0\le j \le d -1$. We may condition on the $B_j$ and apply the Littlewood-Offord inequality (see, for example, \cite[Corollary~7.8]{TVaddcomb}) to each equation $A_j = \sum_{\substack{b \equiv j \pmod k \\ 0\le b \le n}} \xi_b = B_j$, resulting in the bound 
\[ \Pr(\Phi_k(x) \mbox{ divides } f_n(x)) \le \Pr(A_j=B_j\mbox{ for } 0\le j \le d -1) = O\pfrac{\sqrt{k}}{\sqrt n}^d. \]
It is well-known that if $\Phi_k(x)$ has degree $d$, then $k \le c d \log \log d$ for a constant $c$ (following from, for example, Rosser and Schoenfeld \cite[Theorem~15]{RS}), and so using the assumption that $d \le n^{1/3}$, we have 
\[ \Pr(\Phi_k(x) \mbox{ divides } f_n(x)) \le O\pfrac{1}{n^{1/4}}^d, \]
a bound independent of $k$.  Because every cyclotomic polynomial is equal to $\Phi_k(x)$ for some $k$, we have that $\Pr(E_{\mathrm{cyl},d}) \le O\pfrac{1}{n^{1/4}}^d N(d)$, where $N(d)$ is the number of cyclotomic polynomials with degree $d$.  Pomerance \cite{Pom} showed that $N(d) \le c d$ (in fact, \cite{Pom} shows that the constant $c$ tends slowly to zero as $d$ tends to infinity).  Thus, $\Prob(E_{\mathrm{cyl},d}) \le d O\pfrac{1}{n^{1/4}}^d$.

We now apply the union bound to $E_{\mathrm{cyl}}= \bigcup_{3\le d \le n^{1/3}} E_{\mathrm{cyl},d}$.  From the discussion after \eqref{eqn:numdivisors}, we know that the probability of a factor with degree at most 2 is bounded by $O(1/\sqrt n)$ (in fact, this bound is tight for the possible factors $x+1$ and $x-1$), and so by the previous paragraph, we need a similar bound on $\sum_{d=3}^{n^{1/3}} \pfrac{c}{n^{1/4}}^d d$ for any constant $c$.   In fact, using the formula for an infinite arithmetico--geometric series, we have that 
\[ \sum_{d=3}^{\infty} \pfrac{c}{n^{1/4}}^d d \le 4\pfrac{c}{n^{1/4}}^3\le O\pfrac{1}{\sqrt n} \]
for $n$ sufficiently large, which completes the proof.
\end{proof}

\begin{lemma} \label{lemma:iidpoly122}
If $ f_n(z) := z^n + \xi_{n-1} z^{n-1} + \cdots + \xi_1 z + \xi_0 $ is a polynomial in which the coefficients 
$\xi_0, \xi_1, \ldots \xi_{n-1}$ take values $ 1$  or $-1$, all roots of $f_n$ have absolute value strictly between $1/2$ and $2$.
\end{lemma}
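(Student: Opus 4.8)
The plan is to establish the upper bound $\abs{\lambda} < 2$ for every root $\lambda$ of $f_n$ by a direct triangle-inequality estimate, and then to obtain the lower bound $\abs{\lambda} > 1/2$ for free by applying the upper bound to the reciprocal polynomial. The case $n \le 1$ is trivial (either $f_n$ is the nonzero constant $\xi_0 = \pm 1$, with no roots, or $f_1(z) = z + \xi_0$ has the single root $-\xi_0$ of modulus $1$), so assume $n \ge 2$.

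First I would fix $z \in \mathbb{C}$ with $\abs z \ge 2$ and show $f_n(z) \neq 0$. Since each $\abs{\xi_k} = 1$, the triangle inequality and summation of a geometric series give
$$ \abs{f_n(z)} \ge \abs z^n - \sum_{k=0}^{n-1} \abs z^k = \abs z^n - \frac{\abs z^n - 1}{\abs z - 1} \ge \abs z^n - (\abs z^n - 1) = 1 > 0, $$
where the middle inequality uses $\abs z - 1 \ge 1$. Hence no root of $f_n$ has modulus at least $2$; equivalently, every root satisfies $\abs{\lambda} < 2$ strictly (there is no equality case, since $\abs z \ge 2$ already forces $\abs{f_n(z)} \ge 1$).

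Next I would pass to the reciprocal polynomial. Because $\xi_0 = \pm 1 \neq 0$, zero is not a root of $f_n$, so every root $\lambda$ is nonzero, and $w := 1/\lambda$ satisfies $0 = \lambda^n f_n(1/\lambda)$, i.e. $w$ is a root of $g(w) := \xi_0 w^n + \xi_1 w^{n-1} + \cdots + \xi_{n-1} w + 1$. Multiplying by $\xi_0$ and using $\xi_0^2 = 1$ shows that $\xi_0 g(w) = w^n + (\xi_0\xi_1) w^{n-1} + \cdots + (\xi_0\xi_{n-1}) w + 1$ is again a monic degree-$n$ polynomial all of whose coefficients lie in $\{+1, -1\}$ — exactly the form handled in the previous step. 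Its roots coincide with those of $g$, which are precisely the reciprocals of the roots of $f_n$; by the first part they all have modulus $< 2$. Therefore $\abs{1/\lambda} < 2$, i.e. $\abs{\lambda} > 1/2$, for every root $\lambda$ of $f_n$, completing the proof.

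I do not expect a genuine obstacle here; the only points needing a moment's care are the strictness of the two inequalities (addressed above) and checking that $\xi_0 g$ genuinely has the required form — in particular that its constant term equals $\xi_0^2 = 1$ rather than merely $\pm 1$, although for the modulus bound $\pm 1$ would already suffice.
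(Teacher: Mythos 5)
Your proposal is correct, and it takes a genuinely different route from the paper for the lower bound. The paper proves both inequalities directly by the triangle inequality plus a geometric-series estimate: if $|z| \le 1/2$ then $\left|z^n + \sum_{j=1}^{n-1}\xi_j z^j\right| \le \sum_{j=1}^n 2^{-j} < 1 = |\xi_0|$, so $f_n(z) \ne 0$; and ``similarly'' for $|z| \ge 2$. You instead prove only the upper bound $|z| < 2$ directly and then derive the lower bound $|z| > 1/2$ by passing to the reciprocal polynomial $w^n f_n(1/w)$, whose coefficients are again all $\pm 1$ after normalizing by $\xi_0$. This is a clean reduction of one bound to the other, and incidentally it is the same symmetry observation the authors invoke (referencing this lemma) in a commented-out portion of the proof of Theorem~\ref{thm:iidpoly}, so it is fully in the spirit of the paper even though the lemma's own proof does not use it. The reciprocal-trick version also sidesteps the slightly awkward way the paper's direct $|z| \ge 2$ estimate is written.

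Two small slips in your write-up, neither of which affects correctness: the identity ``$0 = \lambda^n f_n(1/\lambda)$'' is not what you mean (it is $g(1/\lambda) = \lambda^{-n} f_n(\lambda) = 0$ that shows $1/\lambda$ is a root of $g$); and the constant term of $\xi_0 g(w)$ is $\xi_0$, not $\xi_0^2 = 1$ --- it is the leading coefficient that equals $\xi_0^2 = 1$. As you yourself note, having the constant term in $\{+1,-1\}$ is all the upper-bound argument requires, so the conclusion stands.
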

\begin{proof}
If $|z| \le 1/2$, then
$	\left|z^n+ \sum_{j=1}^{n-1} z^j \xi_j \right| \leq \sum_{j=1}^n \frac{1}{2^j} < 1,$
and hence
$ |f_n(z)| \geq |\xi_0| - \left|z^n+ \sum_{j=1}^{n-1} z^j \xi_j \right| > 0. $
Thus, if $f_n(z)=0$, we must have $\abs z > 1/2$.  Similarly, if $|z| \ge 2$, then $\left| \sum_{j=0}^{n-1} z^j \xi_j \right| \leq \sum_{j=0}^{n-1} 2^j < 2^n,$ and hence 
$ |f_n(z)| \geq 2^n - \left| \sum_{j=1}^n z^j \xi_j \right| > 0,$ showing that $\abs z < 2$ for any value of $z$ for which $f_n(z) = 0$.
%
\end{proof}

\section*{Acknowledgments}

We thank Melanie Matchett Wood for many useful conversations and for contributing key ideas for Theorem~\ref{thm:main}.  The first author thanks Peter D.T.A. Elliott, Richard Green, and Katherine Stange for useful discussions and references.  The second author thanks Van Vu for originally suggesting this line of inquiry.  The authors also thank the anonymous referee for useful comments and suggestions which led to the current version of Theorem~\ref{thm:iidpoly}.  Christian Borst, 
Evan Boyd, 
Claire Brekken, 
and 
Samantha Solberg 
produced Figure~\ref{fig:pm1}
and were supported by NSF grant DMS-1301690 and co-supervised by Melanie Matchett Wood.  The second author thanks Steve Goldstein for helping direct Borst, Boyd, Brekken, and Solberg's research.   The second author also thanks the Simons Foundation for providing Magma licenses and the Center for High Throughput Computing (CHTC) at the University of Wisconsin-Madison for providing computer resources.

\end{document}